\def\?[#1]{\textbf{[#1]}\marginpar{\Large{\textbf{??}}}}
\newtheorem{theo}{Theorem}
\newtheorem{prop}{Proposition}[section]
\newtheorem{defi}[prop]{Definition}
\newtheorem{lemm}[prop]{Lemma}
\newtheorem{corr}[prop]{Corollary}
\numberwithin{equation}{section}
\newcommand{\mc}{\mathcal}
\newcommand{\rr}{\mathbb{R}}
\newcommand{\nn}{\mathbb{N}}
\newcommand{\cc}{\mathbb{C}}
\newcommand{\zz}{\mathbb{Z}}
\newcommand{\la}{\lambda}
\newcommand{\eps}{\epsilon}
\newcommand{\pl}{\partial}
\newcommand{\x}{\times}
\newcommand{\bbar}{\overline}
\newcommand{\cjd}{\rangle}
\newcommand{\cjg}{\langle}
\newcommand{\demi}{\tfrac{1}{2}}
\DeclareMathOperator{\supp}{supp}
\def\indic{\operatorname{1\hskip-2.75pt\relax l}}
\title[Lens rigidity for manifolds with hyperbolic trapped set]{Lens rigidity for manifolds with hyperbolic trapped set}
\author{Colin Guillarmou}
\email{cguillar@dma.ens.fr}
\address{DMA, U.M.R. 8553 CNRS, \'Ecole Normale Superieure, 45 rue d'Ulm,
75230 Paris cedex 05, France}
\begin{document}
\maketitle

\begin{abstract}
For a Riemannian manifold $(M,g)$ with strictly convex boundary $\pl M$, 
the lens data consists in the set of lengths of geodesics $\gamma$ with endpoints on $\pl M$, 
together with their endpoints $(x_-,x_+)\in \pl M\x \pl M$ 
and tangent exit vectors $(v_-,v_+)\in T_{x_-} M\x T_{x_+} M$.
We show deformation lens rigidity for manifolds with hyperbolic trapped set and no conjugate points, 
a class which contains all manifolds with negative curvature and strictly convex boundary, including those with non-trivial topology and trapped geodesics. 
For the same class of manifolds in dimension $2$, we prove that the set  of endpoints and exit vectors of geodesics (ie. the scattering data) determines the Riemann surface up to conformal diffeomorphism.
\end{abstract}

\section{Introduction}

In this work, we study a geometric inverse problem concerning the 
recovery of a Riemannian manifold  $(M,g)$ with boundary 
from informations about its geodesic flow which can be read 
at the boundary.  Different aspects of this problem
have been extensively studied by \cite{Mu,Mi,Cr1,Ot, Sh,PeUh,StUh1,BuIv, CrHe}, 
among others. It also has applications to applied inverse problems, in geophysics and tomography.
Our results concern the case of negatively curved manifolds with convex
boundaries and more generally manifolds with hyperbolic trapped sets and no conjugate points. 
In those settings we resolve the deformation lens rigidity problem in all dimensions and in dimension $2$ we show that the lens data (and actually the scattering data) determine the Riemann surface up to conformal diffeomorphism. The difference with most previous
works is allowing trapping and non-trivial topology; we obtain the first general results 
in that case. With this aim in view, we introduce new methods making a systematic use of recent analytic  methods introduced in hyperbolic dynamical systems \cite{Li,FaSj, FaTs, DyZw,DyGu2}.

\subsection{Negative curvature}
Let $(M,g)$ be an $n$-dimensional oriented compact Riemannian manifolds with strictly convex boundary $\pl M$ (ie. the second fundamental form is positive). The incoming  (-) and outgoing (+) boundaries of the unit tangent bundle of $M$ are denoted
\[ \pl_\pm SM:=\{ (x,v)\in TM; x\in \pl M, |v|_{g_x}=1, \mp g_x(v,\nu)>0\}\] 
where $\nu$ is the inward pointing unit normal vector field to $\pl M$. 
For all  $(x,v)\in \pl_-SM$, the geodesic  
$\gamma_{(x,v)}$ with initial point $x$ and tangent vector $v$ has either infinite length or 
it exits $M$ at a boundary point $x'\in \pl M$ with tangent vector $v'$ with $(x',v')\in \pl_+SM$.
We call $\ell_g(x,v)\in [0,\infty]$ the length of this geodesic, and if
$\Gamma_-\subset \pl_- SM$ denotes the set of $(x,v)\in \pl_-SM$ with $\ell_g(x,v)=\infty$,
we call $S_g(x,v):=(x',v')\in \pl_+SM$ the exit pair or scattering image of $(x,v)$ when $(x,v)\notin \Gamma_-$. This defines the \emph{length map} and \emph{scattering map} 
\begin{equation} \label{Sell}
\ell_g: \pl_-SM\to [0,\infty], \quad S_g: \pl_-SM\setminus \Gamma_-\to \pl_+SM.
\end{equation}
and the lens data is the pair $(\ell_g,S_g)$. The lens data do not (a priori) contain information on closed geodesics of $M$, neither do they on geodesics not intersecting $\pl M$.

If $(M,g)$ and $(M',g')$ are two Riemannian manifolds with 
the same boundary $N$ and $g|_{TN}=g'|_{TN}$, there is 
a natural identification between $\pl_- SM$ and $\pl_- SM'$ since 
$\pl_-SM$ can be identified with the boundary ball bundle $BN:=\{(x,v)\in TN; |v|_{g}<1\}$ via the orthogonal projection  
$\pl SM\to BN$ with respect to $g$ (and similarly for $(M',g')$).
The \emph{lens rigidity problem} consists in showing that, if $(M,g)$ and $(M',g')$ are two Riemannian 
manifold metrics  
with strictly convex boundary and $\pl M=\pl M'$, then
\begin{equation}\label{question'}
\ell_{g}=\ell_{g'}, \, S_{g}=S_{g'} 
\Longrightarrow \exists \phi\in {\rm Diff}(M'; M), \,\, \phi^*g=g',\,\, \phi|_{\pl M'}={\rm Id}.
\end{equation}
When $(\ell_{g},S_{g})=(\ell_{g'},S_{g'})$, we say that $(M,g)$ and $(M', g')$ are 
\emph{lens equivalent}, while if $S_g=S_{g'}$ we say that they are \emph{scattering equivalent}.

Our first result is a deformation lens rigidity statement which holds in any dimension (this follows from Theorem \ref{Th1} below):
\begin{theo}\label{Th0bis}
For $s\in (-1,1)$, let $(M,g_s)$ be a smooth $1$-parameter family of metrics with negative curvature on a smooth $n$-dimensional manifold $M$ with strictly convex boundary, and assume that $g_s$ is lens equivalent to $g_0$ for all $s$, then there exists a family of diffeomorphisms $\phi_s$ which are 
equal to ${\rm Id}$ at $\pl M$ and with $\phi_s^*g_0=g_s$.  
\end{theo}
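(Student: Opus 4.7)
The plan is to reduce the family statement to an infinitesimal rigidity (X-ray transform) statement and then integrate. Fix $s_0\in(-1,1)$ and set $h:=\pl_s g_s|_{s=s_0}$, a symmetric $2$-tensor on $M$. For any $(x,v)\in \pl_-SM\setminus\Gamma_-$ with respect to $g_{s_0}$, the lens equivalence hypothesis implies that the corresponding $g_s$-geodesic $\gamma_s$ (with fixed basepoint $x\in\pl M$ and $s$-independent exit basepoint in $\pl M$ dictated by $S_{g_s}$) has length $\ell_{g_s}$ independent of $s$. The first variation formula for the arc-length functional at fixed endpoints, together with the fact that $\gamma_{s_0}$ is a critical point of length for $g_{s_0}$, then gives
\[ 0 = \frac{d}{ds}\bigg|_{s=s_0}\int_0^{\ell_{g_s}(x,v)} |\dot\gamma_s|_{g_s}\, dt = \tfrac12\int_0^{\ell_{g_{s_0}}(x,v)} h(\dot\gamma_{s_0},\dot\gamma_{s_0})\, dt, \]
i.e.\ the geodesic X-ray transform $I_2^{g_{s_0}}h$ vanishes on the open set of non-trapped incoming vectors.

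Next I invoke Theorem \ref{Th1}, which asserts the solenoidal injectivity of $I_2^{g_{s_0}}$ in the negatively curved, convex-boundary, possibly trapping setting. This yields a vector field $V_{s_0}$ on $M$ with $V_{s_0}|_{\pl M}=0$ such that $h = \mathcal{L}_{V_{s_0}}g_{s_0}$. Since $s_0$ was arbitrary and the inversion should depend smoothly on $s$ (coming from a meromorphic family of resolvents on anisotropic Sobolev spaces), we obtain a smooth family $s\mapsto V_s$ with $\pl_s g_s = \mathcal{L}_{V_s}g_s$.

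To globalize, I solve the non-autonomous ODE $\pl_s\phi_s = -V_s\circ\phi_s$ with $\phi_0=\mathrm{Id}$ on $M$. Because $V_s$ vanishes on $\pl M$, the flow preserves $\pl M$ pointwise and exists for all $s\in(-1,1)$. A standard Lie derivative computation then gives
\[ \pl_s(\phi_s^*g_s) = \phi_s^*\bigl(\pl_s g_s - \mathcal{L}_{V_s}g_s\bigr) = 0, \]
so $\phi_s^*g_s=g_0$. The family $\psi_s:=\phi_s^{-1}$ satisfies $\psi_s^*g_0=g_s$ and $\psi_s|_{\pl M}=\mathrm{Id}$, which is the assertion of Theorem \ref{Th0bis}.

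The main obstacle is Theorem \ref{Th1} itself. For simple manifolds the s-injectivity of $I_2$ follows from a Pestov identity, but with non-trivial topology and a hyperbolic trapped set this classical route breaks down: the transform is only defined on $\pl_-SM\setminus\Gamma_-$ and the usual Santal\'o-type integration by parts is obstructed by geodesics spiralling into the trapped set. The new input, drawing on \cite{Li,FaSj,FaTs,DyZw,DyGu2}, should be to work with anisotropic Sobolev spaces for the geodesic flow on $SM$, on which the transport equation $Xu=-h(\dot\gamma,\dot\gamma)/2$ admits a controlled right inverse despite trapping, thereby allowing a Pestov-style estimate to be carried out over the full unit tangent bundle and not merely over a convex neighborhood of $\pl M$.
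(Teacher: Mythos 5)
Your overall strategy is essentially the paper's: differentiate the lens data in $s$ to get vanishing of the X-ray transform of $\pl_s g_s$, invoke solenoidal injectivity of $I_2$ to write the variation as a Lie derivative of a vector field vanishing on $\pl M$, then integrate the nonautonomous ODE to build the diffeomorphism family. The paper carries this out by first reducing Theorem \ref{Th0bis} to Theorem \ref{Th1} (checking that negative curvature with strictly convex boundary gives a hyperbolic trapped set, no conjugate points, and an Axiom A flow, whence condition \eqref{impcond} holds by Proposition \ref{youngbowenruelle}), and then proving Theorem \ref{Th1} via Theorem \ref{Inj}.

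Two points where you are imprecise or misattribute the key input.

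First, the statement you invoke as "Theorem \ref{Th1}" is not the solenoidal injectivity of $I_2$; that is Theorem \ref{Inj} part 3, proved in non-positive curvature via the Livsic-type Proposition \ref{kernelPi} and the Pestov identity. Theorem \ref{Th1} is the deformation rigidity result itself, which is what you are (in effect) re-deriving. Your discussion of the "main obstacle" (anisotropic Sobolev spaces for the flow, the difficulty of the Pestov argument under trapping) correctly describes the hard part of Theorem \ref{Inj}/Proposition \ref{kernelPi}, not of Theorem \ref{Th1}; you should have stated the precondition that $\Gamma_\pm\cap\pl_\pm SM$ is the same for all $s$ (this follows from lens equivalence) so that the differentiation of the length function makes sense uniformly in $y\in\pl_-SM\setminus\Gamma_-$.

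Second, and more substantively, your justification for the smooth dependence of $V_s$ on $s$ is off target. Once Theorem \ref{Inj} yields $q_s=D_sp_s$ with $p_s|_{\pl M}=0$, the paper recovers $p_s$ by the elliptic boundary value problem $p_s=(\Delta_{D_s})^{-1}D_s^*q_s$ (Dirichlet Laplacian on $1$-forms), and smoothness in $s$ then follows from elliptic regularity and the smoothness of $s\mapsto g_s$, exactly as in \cite{GuKa1}. The anisotropic Sobolev spaces and the meromorphic continuation of $R_\pm(\la)$ only enter the proof of injectivity of $I_2$ at a fixed $s$; they are not the mechanism giving smooth $s$-dependence of the potential. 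If you tried to push the resolvent route you would have to control how Ruelle resonances move under perturbation of the metric, which is both harder and unnecessary. With these corrections the rest of your integration step (solving $\pl_s\phi_s=-V_s\circ\phi_s$ and computing $\pl_s(\phi_s^*g_s)=0$) matches the paper.
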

In dimension $2$, we show that the scattering data
determine the conformal structure (this is a corollary of Theorem \ref{Th2} below):
\begin{theo}\label{Th0}
Let $(M,g)$ and $(M',g')$ be two oriented negatively curved Riemannian surfaces with  strictly convex boundary such that  $\pl M=\pl M'$ and $g|_{T\pl M}=g'|_{T\pl M'}$. If $(M,g)$ and $(M',g')$ are scattering equivalent, then there is a diffeomorphism $\phi: M\to M'$ such that $\phi^*g'=e^{2\omega}g$ for some $\omega\in C^\infty(M)$ and $\phi|_{\pl M}={\rm Id}$, $\omega|_{\pl M}=0$.
\end{theo}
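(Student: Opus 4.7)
The plan is to deduce Theorem~\ref{Th0} from Theorem~\ref{Th2} by verifying that two oriented negatively curved surfaces with strictly convex boundary fall into the class of manifolds handled by Theorem~\ref{Th2}, and then transferring the boundary normalisation. I expect this reduction to be essentially formal; the entire difficulty of the statement sits inside Theorem~\ref{Th2}.

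First I would check the two dynamical/geometric hypotheses one expects Theorem~\ref{Th2} to require: absence of conjugate points, and hyperbolicity of the trapped set. Absence of conjugate points is the standard convexity argument for Jacobi fields in negative curvature: along any geodesic the norm $|J|$ of a Jacobi field is strictly convex, so $J$ cannot vanish twice. For the trapped set $K\subset SM$, which is compact and flow-invariant, pointwise negativity of the Gauss curvature yields uniform exponential contraction of stable and expansion of unstable Jacobi fields; the horizontal/vertical splitting then produces a continuous hyperbolic decomposition $T_{(x,v)}(SM)=\rr X\oplus E^s\oplus E^u$ on $K$, where $X$ is the geodesic vector field. Strict convexity of $\pl M$ ensures that glancing geodesics form a negligible set and that $S_g$ is smooth off $\Gamma_-$, so we are in the setting of Theorem~\ref{Th2}.

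Granted these hypotheses, Theorem~\ref{Th2} yields a diffeomorphism $\phi:M\to M'$ with $\phi|_{\pl M}=\mathrm{Id}$ and a smooth function $\omega$ on $M$ with $\phi^*g'=e^{2\omega}g$. It remains only to pin down $\omega|_{\pl M}=0$. Evaluating $\phi^*g'=e^{2\omega}g$ on tangent vectors to $\pl M$ and using $\phi|_{\pl M}=\mathrm{Id}$ yields $g'|_{T\pl M'}=e^{2\omega|_{\pl M}}g|_{T\pl M}$, and the hypothesis $g|_{T\pl M}=g'|_{T\pl M'}$ then forces $\omega|_{\pl M}=0$.

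The main obstacle is, as noted, located inside Theorem~\ref{Th2}: to recover the conformal class of a surface from its scattering map in the presence of trapping and nontrivial topology, one must extract the boundary cross-ratio (equivalently, an invariant complex structure) from $S_g$ alone, without access to lengths. I would expect the proof of Theorem~\ref{Th2} to use the microlocal/resolvent machinery developed in \cite{FaSj,DyZw,DyGu2} to control the X-ray transform on functions and $1$-forms in the presence of hyperbolic trapping, together with a conformal rigidity step in the spirit of Otal's cross-ratio argument adapted to surfaces with boundary.
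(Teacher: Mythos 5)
Your reduction of Theorem~\ref{Th0} to Theorem~\ref{Th2} is on the right track, but you have checked only two of the three hypotheses of Theorem~\ref{Th2}. That theorem also requires the integrability condition \eqref{impcond} on the non-escaping mass function $V(t)$, namely $\int_1^\infty t^{1+\eps}V(t)\,dt<\infty$ for some $\eps>0$; your proposal does not address it at all. This is not an incidental hypothesis: the whole resolvent and $X$-ray analysis in the paper (boundedness of $I_0$ and $I_1$, the operator $\Pi$, the extension/restriction properties of invariant distributions) is built on decay of $V(t)$, so omitting the verification leaves the application of Theorem~\ref{Th2} ungrounded. Hyperbolicity of $K$ alone does not automatically give exponential (or even integrable) decay of $V(t)$ — one also needs to rule out $K$ being an attractor, and the quantitative statement comes from the escape-rate formula, not from the hyperbolic splitting itself.

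The way the paper closes this gap is short but does use dynamics beyond what you invoke: by Eberlein \cite{Eb}, the geodesic flow of a negatively curved manifold is Axiom~A on the trapped set (periodic orbits are dense in $K$), and then Proposition~\ref{youngbowenruelle} (Bowen--Ruelle and Young) gives that the escape rate $Q=P(J_u)$ is strictly negative, since $K$ cannot be an attractor when $\pl M$ is strictly convex (some geodesic joins $\pl_-SM$ to $\pl_+SM$). Thus $V(t)=\mc{O}(e^{Qt})$ with $Q<0$, which is much stronger than \eqref{impcond}. Your argument should add exactly this step. The rest of what you wrote (Jacobi field convexity for absence of conjugate points, hyperbolic splitting of $T_K(SM)$ from pointwise negative curvature, strict convexity controlling the glancing set) is correct, and your final remark that $\omega|_{\pl M}=0$ follows by restricting $\phi^*g'=e^{2\omega}g$ to $T\pl M$ is sound, though redundant since Theorem~\ref{Th2} already asserts it.
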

In the special case of simple manifolds, these results correspond to the much studied boundary rigidity problem, which consists in determining a metric (up to a diffeomorphism which is the identity on $\pl M$) on an $n$-dimensional Riemannian manifold $(M,g)$ with boundary $\pl M$ from the distance function $d_g:M\x M\to \rr$ restricted to $\pl M\x \pl M$.  A \emph{simple} manifold is a manifold with strictly convex boundary such that the exponential map  
$\exp_x: \exp_x^{-1}(M)\to M$ is a diffeomorphism at all points $x\in M$. 
Such manifolds have no conjugate points and no trapped geodesics (ie. geodesics entirely contained in $M^\circ:=M\setminus \pl M$), and between two boundary points $x,x'\in \pl M$ there is a unique geodesic in $M$ with endpoints $x,x'$. Boundary rigidity for simple metrics was conjectured by Michel \cite{Mi} and has been proved in some cases:\\
1) If $(M,g)$ and $(M,g')$ are conformal and lens equivalent simple manifolds, they are isometric; this is shown by Mukhometov-Romanov, Croke \cite{Mu,MuRo,Cr2}.\\
2) If $(M,g)$ and $(M',g')$ are lens equivalent simple surfaces ($n=2$), they are isometric.
This was proved by Otal \cite{Ot} in negative curvature and by Croke \cite{Cr1} in  non-positive curvature.
For general simple metrics, Pestov-Uhlmann \cite{PeUh} proved that the scattering data determine the conformal class and, combined with 1), this shows Michel's conjecture for $n=2$.\\ 
3) If $g$ and $g'$ are simple metrics that are close enough to a given simple analytic metric $g_0$, and are lens equivalent, then they are isometric. This was proved by Stefanov-Uhlmann \cite{StUh1}. 
All metrics $C^2$-close to a flat metric $g_0$ on a smooth domain of $\rr^n$ is boundary rigid, this was proved by Burago-Ivanov \cite{BuIv}.\\
4) A $1$-parameter smooth family of simple non-positive curved metrics with same lens data are all isometric, this was shown by Croke-Sharafutdinov \cite{CrSh}.

Thus, Theorem \ref{Th0} is similar to Pestov-Uhlmann's result in 2) for a class of non-simple surfaces and Theorem \ref{Th0bis} extends 4). We emphasize that in our case, there are typically infinitely many trapped geodesics (and closed geodesics) and this provides the first general rigidity result  in presence of trapping.
In fact, when there are trapped geodesics or when the flow has conjugate points, there exist lens equivalent metrics which are not isometric, see Croke \cite{Cr2} and Croke-Kleiner \cite{CrKl}.
So far, only results of lens rigidity in very particular cases were proved in case of trapped geodesics:\\ 
5)  Croke-Herreros \cite{CrHe} proved that a $2$-dimensional negatively curved or flat cylinder with convex boundary is lens rigid. Croke \cite{Cr3} showed that the flat product metric on $B_n\x S^1$ is scattering rigid if $B_n$ is the unit ball in $\rr^n$.\\
6) Stefanov-Uhlmann-Vasy \cite{SUV}  proved that the lens data near $\pl M$ determine the metric near $\pl M$ for metrics in a fixed conformal class, and more generally they recover the metric outside the convex core of $M$ under convex foliations assumptions.\\
7) For the flat metric on $\rr^n\setminus \mc{O}$ where $\mc{O}$ is a union of strictly convex domains, Noakes-Stoyanov \cite{NoSt} show that the lens data for the billiard flow on $\rr^n\setminus \mc{O}$ determine $\mc{O}$. \\

If $SM=\{(x,v)\in TM; |v|_{g_x}\}$ is the unit tangent bundle and $SM^\circ$ its interior, 
the \emph{trapped set} $K\subset SM^\circ$ of the geodesic flow is the set of 
points $(x,v)\in SM^\circ$ such that the geodesic passing through $x$ and tangent to $v$ does not intersect the boundary $\pl SM$; $K$ is a closed flow-invariant subset of $SM^\circ$ which includes all closed geodesics. 
In results 5) above, the trapped set has a simple structure, it is either two disjoint closed geodesics or an explicit smooth submanifold; in 6), it can be anything but the result allows only to
determine the metric near $\pl M$, which is the region of $M$ with no trapped geodesics. In comparison, in our case (in Theorem \ref{Th0bis} and \ref{Th0}), the trapped set is typically a complicated fractal set. For instance, in constant negative curvature they have Hausdorff dimension given in terms of the convergence exponent of the Poincar\'e series for the 
fundamental group (see \cite{Su}). 
  
\subsection{More general results} As mentioned above, the results obtained in negative 
curvature are particular cases of more general theorems. 
For $t\in\rr$, we denote by 
$\varphi_t$ the geodesic flow at time $t$ on $SM$, ie. $\varphi_t(x,v)=(x(t),v(t))$ where 
$x(t)$ is the point at distance $t$ on the geodesic generated by $(x,v)$ and $v(t)=\dot{x}(t)$ the tangent vector.
We say that the trapped set $K$ is a \emph{hyperbolic set} 
if there exists $C>0$ and $\nu>0$ so that for all $y=(x,v)\in K$, there is a continuous flow-invariant  splitting
\begin{equation}\label{hypdecomp}
T_y(SM)=\rr X(y)\oplus E_u(y)\oplus E_s(y)
\end{equation}
where $E_s(y)$ and $E_u(y)$ are vector subspaces satisfying 
\begin{equation}\label{hyperbolicK} 
\begin{gathered}
||d\varphi_t(y)w||\leq Ce^{-\nu t}||w||,\quad  \forall t>0, \forall w\in E_s(y),\\
||d\varphi_t(y)w||\leq Ce^{-\nu |t|}||w||,\quad  \forall t<0, \forall w\in E_u(y).
\end{gathered}
\end{equation} 
Here the norm is the Sasaki norm on $SM$ induced by $g$. This setting is quite natural and `interpolates' 
between the simple domain case (open, no trapped set) and the Anosov case (closed manifolds with hyperbolic geodesic flow). Negative curvature near the trapped set implies
that $K$ is a hyperbolic set, see \cite[\S3.9 and Theorem 3.2.17]{Kl2}, but although this is the typical example, 
negative curvature is a priori not necessary for that to happen. 
%There is an important quantity which measures the volume of points staying a long time 
%in $SM^\circ$, that we call non-escaping mass function, and defined for $t\geq 0$ by
%\[ V(t):={\rm Vol}(\{(x,v)\in SM^\circ; \varphi_s(x,v)\in SM^\circ, \forall s\in[0,t]\}).\]
%In particular, it can be seen that ${\rm Vol}(K)=0$ is equivalent to $V(t)\to 0$ as $t\to +\infty$, see  Section \ref{incoming/outgoing}.
%The following condition on $V(t)$ appears naturally
%\begin{equation}\label{impcond}
%\exists \eps>0, \quad \int_1^\infty t^{1+\eps}V(t)dt<\infty,
%\end{equation}
%as it will allow to define the X-ray transform on reasonable functional spaces; it is for instance satisfied if $K$ is hyperbolic and the closed geodesics are dense in $K$ (following \cite{Sm}, the flow is called \emph{Axiom A} under this density condition), and more generally if the topological pressure of the unstable Jacobian on $K$ is negative. 
We show
\begin{theo}\label{Th2}
Let $(M,g)$ and $(M',g')$ be two oriented Riemannian surfaces with  strictly convex boundary such that  $\pl M=\pl M'$ and $g|_{T\pl M}=g'|_{T\pl M'}$. Assume that the trapped set of $g$ and $g'$ are hyperbolic and that the metrics have no conjugate points. If $(M,g)$ and $(M',g')$ are scattering equivalent, then there is a diffeomorphism $\phi: M\to M'$ such that $\phi^*g'=e^{2\omega}g$ for some $\omega\in C^\infty(M)$ and $\phi|_{\pl M}={\rm Id}$, $\omega|_{\pl M}=0$.
\end{theo}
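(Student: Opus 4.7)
The plan is to follow the Pestov--Uhlmann strategy \cite{PeUh} for scattering rigidity of simple surfaces, replacing their simple-surface injectivity arguments with the open-flow analogues built from the microlocal machinery of \cite{DyGu2, FaSj, DyZw}. I would proceed in three main steps.

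First, I would handle boundary determination. Geodesics that enter $M$ almost tangentially to $\pl M$ exit after a very short time (by strict convexity), and their scattering data encode the full Taylor expansion of $g$ at $\pl M$; cf.\ the approach in \cite{SUV}. This lets me construct a diffeomorphism $\phi_{0}\colon M'\to M$ with $\phi_{0}|_{\pl M'}={\rm Id}$ such that $\phi_{0}^{*}g$ and $g'$ agree to infinite order on $\pl M'$. After this pullback, I may assume $M=M'$, and that $h:=g'-g$ is a smooth symmetric $2$-tensor on $M$ vanishing to infinite order on $\pl M$, with $g$ and $g'$ having the same scattering map.

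Second, I would translate the equality of scattering maps into the vanishing of an X-ray transform. In dimension $2$ the unit tangent bundles $SM$ and $SM'$ are canonically identified by rescaling through the (still unknown) conformal factor, and under this identification the equality $S_{g}=S_{g'}$ forces the integral of $h(\dot\gamma,\dot\gamma)$ along every non-trapped geodesic $\gamma$ of $g$ with endpoints on $\pl M$ to vanish; equivalently, one may pass through the deformation $g_{s}:=g+sh$ and differentiate the constant-scattering condition at $s=0$. Either way, the analytic content reduces to $I_{2}^{g}h=0$.

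Third, and this is the heart of the proof, I would establish the solenoidal injectivity of $I_{2}^{g}$ modulo conformal changes: if $h$ is smooth, vanishes to infinite order on $\pl M$, and satisfies $I_{2}^{g}h=0$, then $h=d^{s}V+2\omega g$ for some vector field $V$ with $V|_{\pl M}=0$ and some $\omega\in C^{\infty}(M)$ with $\omega|_{\pl M}=0$. The vector field $V$ integrates to a diffeomorphism $\phi$ equal to the identity on $\pl M$, and one obtains $\phi^{*}g'=e^{2\omega}g$ as desired. The main obstacle is precisely this last step: one must set up a Pestov identity on $SM$ in the presence of a non-empty hyperbolic trapped set, use the anisotropic resolvent of the geodesic vector field (as in \cite{FaSj, DyZw, DyGu2}) to solve transport equations $Xu=f$ with appropriate boundary traces, and exploit the $S^{1}$-fibre symmetry of $SM$ specific to dimension $2$ to separate the potential part $d^{s}V$ from the conformal part $2\omega g$. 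The hypothesis \eqref{impcond} is what guarantees the relevant operators act Fredholmly on the natural $L^{2}$-type spaces.
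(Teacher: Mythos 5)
Your proposal takes a genuinely different route from the paper, and unfortunately it does not work. The paper's proof of Theorem~\ref{Th2} is Theorem~\ref{scatrig}: it follows Pestov--Uhlmann's scheme by showing that the scattering map $S_g$ determines the space of boundary values of holomorphic functions on $(M,g)$ (via the fiberwise Hilbert transform, the commutator identity \eqref{commutator}, and the invariant extension / surjectivity results of Propositions~\ref{boundval} and~\ref{surj}), and then invokes Belishev~\cite{Be} to recover the conformal class. No tensor tomography on $2$-tensors is used anywhere in that argument.

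The central step of your proposal has a logical gap. You assert that $S_g=S_{g'}$ forces $I_2^g h=0$ for $h=g'-g$, either directly or by ``differentiating the constant-scattering condition at $s=0$'' along $g_s:=g+sh$. But the intermediate metrics $g_s$ are not assumed to be scattering equivalent to $g$; the linearization $\partial_s\big|_{s=0}$ of a family with constant scattering data is exactly what drives Theorem~\ref{Th1} (deformation rigidity), and it is not available here. Worse, even if one could reduce to $I_2^g h=0$, the proposed characterization of $\ker I_2^g$ as $\{d^sV+2\omega g\}$ is internally inconsistent: the conformal term satisfies $I_2^g(2\omega g)=2I_0\omega$, which does not vanish; combined with injectivity of $I_0$ (Theorem~\ref{injofI_0}) this would force $\omega=0$, i.e.\ an \emph{isometry}, whereas Theorem~\ref{Th2} only asserts (and can only assert, since only the scattering map is known) conformal equivalence. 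Finally, the paper's solenoidal injectivity of $I_m$ for $m\geq 2$ (part 3 of Theorem~\ref{Inj}) is only proved under a non-positive curvature assumption, which is not among the hypotheses of Theorem~\ref{Th2}; the no-conjugate-point plus hyperbolic-trapped-set hypotheses used here are not enough, with the paper's tools, to run the $m=2$ Pestov argument. This is precisely why the paper routes through boundary values of holomorphic functions instead of through tensor tomography, and why the author remarks in the introduction that determining the conformal \emph{factor} from the lens data in this trapped setting remains open.
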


In all dimension we obtain a deformation rigidity result:
\begin{theo}\label{Th1}
Let $M$ be a smooth compact manifold with boundary, equipped with a smooth $1$-parameter family 
of lens equivalent metrics $g_s$ for $s\in(-1,1)$ and assume that $\pl M$ is strictly convex for $g_s$ 
for each $s$. Suppose that, for all $s$, $g_s$ have hyperbolic trapped set.\\
1) If for all $s$, $g_s$ is conformal to $g_0$ and has no conjugate points, then $g_s=g_0$.\\
2) If $g_s$ has non-positive curvature, then there exists a family of diffeomorphisms $\phi_s$ which are 
equal to ${\rm Id}$ at $\pl M$ and with $\phi_s^*g_0=g_s$.  
\end{theo}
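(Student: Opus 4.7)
The plan is to reduce the deformation rigidity problem to the solenoidal injectivity of the geodesic X-ray transform $I_2^g$ on symmetric 2-tensors and to prove that injectivity by combining a Pestov-type identity with the resolvent calculus for open hyperbolic flows developed in \cite{FaSj,DyZw,DyGu2}. First, fix $s_0\in(-1,1)$ and differentiate the lens data in $s$ at $s=s_0$. A standard Sharafutdinov-type computation shows that for every $g_{s_0}$-geodesic $\gamma$ with endpoints on $\pl M$, the integral along $\gamma$ of the symmetric 2-tensor $\dot g_{s_0}:=\pl_s g_s|_{s=s_0}$ evaluated on the tangent vector vanishes; equivalently, $I_2^{g_{s_0}}\dot g_{s_0}=0$. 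Assumption \eqref{impcond} is already needed at this stage to make $I_2^{g_{s_0}}$ well defined on sufficiently regular tensors, since integrals along orbits with escape time $t$ are controlled by the decay of $V(t)$.

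Next, I would establish that the kernel of $I_2^{g_{s_0}}$ consists exactly of potential tensors $D^{\sigma}v$, where $D^{\sigma}$ is the symmetric covariant derivative of $g_{s_0}$ and $v$ is a smooth 1-form on $M$ vanishing on $\pl M$. The inclusion of potentials in the kernel is immediate by integration by parts. For the reverse, decompose $\dot g_{s_0}=D^{\sigma}v+h$ into potential and solenoidal parts with $v|_{\pl M}=0$, so that $I_2^{g_{s_0}}h=0$. Writing $\pi_2^*h$ for the function on $SM$ obtained by evaluating $h$ on the unit tangent vector, this vanishing permits solving $Xu=-\pi_2^*h$ on $SM$ with $u|_{\pl_+SM}=0$, where $X$ is the geodesic vector field. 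Producing such a primitive with the regularity needed near the fractal hyperbolic trapped set $K$ is precisely what the anisotropic Sobolev spaces of \cite{FaSj} together with the meromorphic extension and flow inversion results of \cite{DyZw,DyGu2} provide, under \eqref{impcond}. A Pestov identity for symmetric 2-tensors then expresses a squared vertical derivative of $u$ as a sum of a non-negative horizontal term and a curvature contribution whose sign is opposite to the sectional curvature; non-positivity of the curvature of $g_{s_0}$ forces both terms to vanish, from which $h=0$ follows.

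Finally, a standard ODE construction assembles the smooth family $\{\phi_s\}$: the decomposition $\dot g_s=D^{\sigma,g_s}v_s$ with $v_s|_{\pl M}=0$ yields a time-dependent $g_s$-solenoidal generator, and integrating the flow of the vector field $g_s$-dual to $-v_s/2$ produces diffeomorphisms $\phi_s$ equal to the identity on $\pl M$ and satisfying $\phi_s^*g_0=g_s$. The hardest step is by far the solenoidal injectivity of $I_2^{g_{s_0}}$: the Pestov integration by parts must be carried out on the open manifold $SM^\circ$, and the contributions from $\pl SM$ together with those coming from orbits accumulating on the fractal trapped set $K$ must be controlled simultaneously. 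Condition \eqref{impcond} and the microlocal/dynamical machinery of \cite{FaSj,DyZw,DyGu2} enter exactly to secure this control and to replace the classical simple-manifold arguments.
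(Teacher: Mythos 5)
Your proposal follows the paper's proof of Theorem~\ref{Th1} essentially verbatim: differentiate the lens data to obtain $I_2^{g_s}\dot g_s=0$ (the Sharafutdinov computation, cf.\ \cite[Sec.~1.1]{Sh}), apply the solenoidal injectivity of $I_2$ in non-positive curvature with hyperbolic trapping (Theorem~\ref{Inj}, part~3, which rests on the Livsic-type Proposition~\ref{kernelPi} for smoothness of the primitive near $K$, plus a Pestov-type argument) to write $\dot g_s = D_s p_s$ with $p_s|_{\pl M}=0$, and integrate the associated time-dependent vector field to construct $\phi_s$. One small imprecision worth flagging: for a symmetric $2$-tensor the left-hand side of the Pestov identity, $\|\nabla^v Xu\|^2 = \|\nabla^v \pi_2^* h\|^2$, is not zero, so non-positive curvature does not by itself ``force both terms to vanish''; closing the $m\ge 2$ case requires the $1$-controlled flow argument of \cite[Sec.~11]{PSU}, which is what the paper invokes.
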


Theorem \ref{Th0} and \ref{Th0bis} follow from these results: 
 negatively curved metrics satisfy the assumptions of both theorems 
since these have no conjugate points.
Hyperbolicity of $K$ is a stable condition by small perturbations of the metric, 
and there is structural stability of hyperbolic sets for flows (see \cite[Chapter 18.2]{KaHa} and \cite{Ro}), 
which justifies the study of infinitesimal rigidity in that class of metrics. Other natural examples of such manifolds are strictly convex subset of closed manifold with Anosov geodesic flows.

\subsection{X-ray transform and Livsic type theorem}

One of the main tools for proving the results above is a precise analysis of the X-ray transform on tensors for manifolds with hyperbolic trapped set and no conjugate points. 
The $X$-ray transform of a function $f$ on $M$ is defined to be the  
set of integrals of $f$ along all possible geodesics with endpoints in $\pl M$, this is described by the operator
\[ I_0: C^\infty(M)\to C^\infty(\pl_-SM\setminus \Gamma_-), \quad I_0f(x,v)=\int_{0}^{\ell_g(x,v)}f(\pi_0(\varphi_t(x,v)))dt\]
where $\pi_0:SM\to M$ is the projection on the base.
We prove injectivity of $I_0$:
\begin{theo}\label{injofI_0}
Let $(M,g)$ be a Riemannian surface with  strictly convex boundary, hyperbolic trapped set and no conjugate points. Then for each $p>2$, the operator 
$I_0:L^p(M)\to L^2(\pl_-SM)$ is bounded and injective.
\end{theo}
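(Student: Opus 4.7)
The plan is to establish boundedness and injectivity of $I_0$ separately.

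For the boundedness of $I_0:L^p(M)\to L^2(\pl_-SM)$, I would begin with Hölder's inequality applied to the chord integral: for $(x,v)\in\pl_-SM\setminus\Gamma_-$,
\begin{equation*}
|I_0f(x,v)|^2\leq \ell_g(x,v)^{2/p'}\Big(\int_0^{\ell_g(x,v)}|f|^p(\pi_0\varphi_t(x,v))\,dt\Big)^{2/p}, \qquad p'=\tfrac{p}{p-1}.
\end{equation*}
Integrating against the natural Liouville measure $d\mu$ on $\pl_-SM$, a second Hölder with exponents $p/2$ and $p/(p-2)$ combined with Santaló's formula reduces the desired bound to the finiteness of $\int_{\pl_-SM}\ell_g^{2(p-1)/(p-2)}\,d\mu$, the complementary factor becoming a constant times $\|f\|_{L^p(M)}^2$. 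For $p=2(1+1/\epsilon)$ one checks $2(p-1)/(p-2)=\epsilon+2$. Applying Santaló to $\indic_{\{\tau>t\}}$ with $\tau$ the forward exit time on $SM$ yields
\begin{equation*}
\int_0^\infty t^{1+\epsilon}V(t)\,dt=\frac{1}{(2+\epsilon)(3+\epsilon)}\int_{\pl_-SM}\ell_g^{3+\epsilon}\,d\mu,
\end{equation*}
so \eqref{impcond} gives $\ell_g\in L^{3+\epsilon}\subset L^{\epsilon+2}$ (as $d\mu$ is finite) and boundedness follows.

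For injectivity, assume $I_0f=0$ and define the formal antiderivative
\begin{equation*}
u(x,v):=\int_0^{\ell_g(x,v)}f(\pi_0\varphi_t(x,v))\,dt,
\end{equation*}
which makes pointwise sense on $SM$ outside the forward trapped set $\Gamma_+$, satisfies $Xu=-\pi_0^*f$ there, and vanishes on $\pl_-SM\cup\pl_+SM$ (the first by assumption, the second because $\ell_g\equiv 0$ on $\pl_+SM$). The crucial step is to extend $u$ across $\Gamma_+$ into a function on all of $SM$ in a space amenable to Pestov. I would identify $u$ with $R_+(0)(\pi_0^*f)$, where $R_+(0)$ is the forward resolvent of $-X$ at spectral parameter $0$: by the anisotropic/microlocal framework for open hyperbolic flows of Faure--Sjöstrand, Dyatlov--Zworski and Dyatlov--Guillarmou \cite{FaSj,DyZw,DyGu2}, $R_+(0)$ extends boundedly from $L^p(SM)$ to an anisotropic Sobolev space with wavefront set concentrated on the dual unstable bundle. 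In particular $u\in L^2(SM)$ with $Xu=-\pi_0^*f$ distributionally, and $u$ has enough regularity transverse to the weak-unstable direction for $Vu\in L^2(SM)$ to make sense.

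With $u$ so defined, I would invoke the two-dimensional Pestov identity
\begin{equation*}
\|VXu\|_{L^2}^2=\|XVu\|_{L^2}^2-\langle KVu,Vu\rangle+\|Xu\|_{L^2}^2+\mathcal{B}(u),
\end{equation*}
with $K$ the Gaussian curvature and $\mathcal{B}(u)$ the boundary term on $\pl SM$. Since $f$ depends only on $x$, $VXu=-V\pi_0^*f=0$ and the left-hand side vanishes. The no-conjugate-points hypothesis provides a continuous bounded solution $r$ on $SM$ to the Riccati equation $Xr+r^2+K=0$, built from stable Jacobi fields which extend continuously across the hyperbolic trapped set by stable manifold theory. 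Integration by parts with $r$ recasts $-\langle KVu,Vu\rangle=\|rVu\|_{L^2}^2-2\langle rVu,XVu\rangle+\mathcal{B}'(u)$. Since $u|_{\pl SM}=0$ and the vertical field $V$ is tangent to $\pl SM$, one also has $Vu|_{\pl SM}=0$, so both boundary terms $\mathcal{B},\mathcal{B}'$ vanish and regrouping produces
\begin{equation*}
\|XVu-rVu\|_{L^2}^2+\|Xu\|_{L^2}^2=0.
\end{equation*}
Hence $Xu=0$, i.e.\ $\pi_0^*f=0$ and $f=0$. The principal obstacle is the regularity step: the trapped set $K$ is typically a complicated fractal, so the pointwise formula for $u$ degenerates across $\Gamma_+$, and producing $u$ as a genuine $L^2$-function on $SM$ requires the full anisotropic resolvent theory; one must furthermore verify that the Pestov identity and the Riccati integration by parts remain valid for such $u$, with well-controlled boundary traces at $\pl SM$. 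Condition \eqref{impcond} is tuned precisely so that $\pi_0^*f\in L^p(SM)$ lies in the correct input space for $R_+(0)$ and the integral identities converge.
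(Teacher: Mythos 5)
Your boundedness argument is essentially the same as the paper's (Lemma \ref{boundedI}): two applications of H\"older, Santal\'o's formula, and the estimate $\int_{\partial_-SM}\ell_+^q\,d\mu_\nu\lesssim 1+\int_1^\infty t^{q-1}V(t)\,dt$. That part is fine.

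The injectivity argument has a genuine gap, and it is precisely the one you flag as the ``principal obstacle'' but do not close. You set $u=R_+(0)(\pi_0^*f)$ and then want to apply the Pestov identity, asserting that the anisotropic resolvent theory gives $u\in L^2$ with $Vu\in L^2$ and wavefront ``concentrated on the dual unstable bundle.'' Neither claim is available for an arbitrary $f\in L^p$. Proposition \ref{boundedL2} only gives $R_+(0):L^p\to L^1$, and Proposition \ref{DyGu} gives $R_+(0):H_0^s\to H^{-s}$, i.e.\ \emph{negative} Sobolev output; moreover the wavefront description ${\rm WF}(R_+(0)f)\subset E_-^*$ (not $E_+^*$, by the way) is derived for $f\in C_c^\infty$, not for $f\in L^p$. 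Even granting $f$ smooth, $R_+(0)f$ alone still has $E_-^*$-type singularities on the trapped set $K$, so $Vu$ is not well-defined there and the Pestov integration by parts (which needs $u$, $Vu$, $XVu$ in $L^2$ and well-defined traces at $\partial SM$) cannot be justified.

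The paper closes this gap by a two-step regularity bootstrap that your proposal omits entirely. First (Corollary \ref{regulkerI}), $I_0f_0=0$ implies $\Pi_0^e f_0=0$, and because $\Pi_0^e={\pi_0}_*(R_+(0)-R_-(0))\pi_0^*$ is an \emph{elliptic} $\Psi$DO of order $-1$ on $M_e^\circ$ (Proposition \ref{PsidoPi0}, where the no-conjugate-points assumption enters crucially to kill off-diagonal singularities), one concludes that $f_0\in C^\infty(M)$ and vanishes to infinite order at $\partial M$. Second (Proposition \ref{kernelPi}), once $f_0$ is smooth, $\Pi^e(\pi_0^*f_0)=0$ forces $R_+(0)(\pi_0^*f_0)=R_-(0)(\pi_0^*f_0)$, so the wavefront set of $u$ lies in $E_-^*\cap E_+^*=\{0\}$ over $K$; this cancellation of the two one-sided resolvents is what makes $u$ genuinely $C^\infty(SM)$ with vanishing trace at $\partial SM$, at which point Pestov applies rigorously. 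Your argument uses only $R_+(0)$ and therefore cannot erase the $E_-^*$ singularities on $K$; without the elliptic bootstrap it also cannot upgrade $f$ from $L^p$ to smooth. The Riccati-equation reformulation of the index-form positivity is a legitimate alternative to the Santal\'o-based control used in \cite{PSU,DKSU}, but it does not help with the missing regularity.
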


We prove a similar theorem for the X-ray transform on $1$-forms, and when the curvature is non-positive, for $m$-symmetric tensors (see Theorem \ref{Inj} for a precise statement). 
We also obtain surjectivity of $I_0^*$ and prove that $I_0^*I_0$ is an elliptic pseudo-differential operator.
An important aspect of our analysis that is somehow surprising is that, even though the flow has trapped trajectories, the X-ray transform still fits into a Fredholm type problem like it does for simple domains. The main tool to show injectivity of $I_0$ 
is a Livsic theorem of a new type. Indeed, a H\"older Livsic theorem exists on the trapped set \cite[Th. 19.2.4]{ KaHa} but this is not very useful for our purpose.
The result we need and prove in Proposition \ref{kernelPi} is the following: if $f\in C^\infty(SM)$ integrates to $0$ along all geodesics relating boundary points of $M$, then there exists $u\in C^\infty(SM)$ satisfying $Xu=f$ and $u|_{\pl SM}=0$.
The method to prove this uses strongly the hyperbolicity 
of $K$, and a novelty here is that we make use of the theory of anisotropic Sobolev spaces adapted to the dynamic, which appeared recently in the field of hyperbolic dynamical systems (typically on Anosov flows \cite{BuLi,FaSj}) and  exponential decay of correlations \cite{Li}. To perform this analysis, we use microlocal tools developed recently in joint work with Dyatlov \cite{DyGu2} for Axiom A type dynamical systems. Another importance of this method 
is that it should give local uniqueness and stability estimates in any dimension for the boundary distance function in the universal cover (combining with methods of \cite{StUh1,StUh3}) and allow to deal with more general questions, like attenuated ray transform.

We also notice that a byproduct of Theorem \ref{injofI_0} (using  \cite[Th. 1.1]{DKLS}) is the existence of many new  examples with non-trivial topology and complicated trapped set where the Calder\'on problem can be solved in a conformal class.

\subsection{Comments} 
1) First, we notice that the assumption $g=g'$ on $T\pl M$ in Theorem \ref{Th2} is not a serious 
one and could be removed by standard arguments since, by \cite{LSU}, the length function near 
$\pl_0SM:=\{(x,v)\in \pl SM; \cjg\nu,v\cjd=0\}$ determines the metric on $T\pl M$ 
(we would then have to change slightly the definition of $S_g$, as in \cite{StUh3}).

2) A part of this work (in particular Section 4.3) deals with very general assumptions (no hyperbolicity assumption on $K$ and no assumptions on conjugate point) to describe solutions of the boundary value problems for transport equations in $SM$.

3) Contrary to the simple metric setting, the lens equivalence between two metrics does not induce a conjugation of geodesic flows, which makes the problem more difficult.

4) As pointed out to me by M. Salo, Theorem \ref{injofI_0} is sharp in the sense that if there exists a flat cylinder 
$\mc{C}=((-\eps,\eps)_\tau \x (\rr/a\zz)_\theta, d\tau^2+d\theta^2)$ (with $a>0$) embedded in a surface with strictly convex boundary,  then it is easy to check that $\ker I_0$ is infinite dimensional
and contains all functions $f$ compactly supported in $\mc{C}$, depending only on $\tau$ 
with $\int_{-\eps}^\eps f(\tau)d\tau=0$. In this case the trapped is of course not hyperbolic.

5) To prove Theorem \ref{Th2}, we show that the scattering map $S_g$ determines the space of boundary values of holomorphic functions on any surface with hyperbolic trapped set, no 
conjugate points. This result was first shown by Pestov-Uhlmann \cite{PeUh} in the case of simple domains. We use their commutator relation between flow and fiberwise Hilbert transform,
but we emphasize that due to trapping, several important aspects of their proof relating scattering map and boundary values of holomorphic functions are much more difficult to implement. 
To obtain the desired result, we need to address delicate questions which are absent in the non-trapping case: 
we need to solve boundary value problems for the transport equations in low regularity spaces and 
understand the wavefront set of solutions, we need to describe boundary values of invariant distributions in $SM$ with certain regularity only in terms of the scattering map $S_g$, we also need to prove injectivity of X-ray transform  on $1$-forms in certain negative Sobolev spaces. 
The use of the recent joint paper with Dyatlov \cite{DyGu2} is fundamental, and 
hyperbolicity of the flow on $K$ is very important to address these problems.
The space of boundary values of holomorphic functions allows to recover $(M,g)$ up to a conformal diffeomorphism by the result of Belishev \cite{Be}. We are not able to prove that the lens data determine the conformal factor. We think that it does but it is not an easy matter: indeed,  all proofs known in the simple domain case seem to fail in our setting due to the fact that 
there is an infinite set of geodesics between two given boundary points and the problem is that we do not know if the geodesics 
starting at $(x,v)\in \partial_-SM$ for lens equivalent conformal metrics  $g'=e^{2\omega}g$ and $g$ are homotopic. The difficulty of this question is related to the fact that small perturbations of the metric induce large perturbations for the geodesics passing through a fixed $(x,v)\in \pl_-SM$ if $\ell_g(x,v)$ is large, thus allowing for huge 
changes of the homotopy class to which the geodesic belongs. 

\textbf{Ackowledgements.} We thank particularly S. Dyatlov for the work \cite{DyGu2}, which is 
fundamentally used here. Thanks also to V. Baladi, S. Gou\"ezel, M. Mazzucchelli, F. Monard, V. Millot, F. Naud, G. Paternain, S. Tapie, G. Uhlmann, M. Zworski for useful discussions and comments. The research is partially supported by grants ANR-13-BS01-0007-01 and ANR-13-JS01-0006.

\section{Geometric setting and dynamical properties}
 
\subsection{Extension of $SM$ and the flow into a larger manifold}\label{subset:extension}
It is convenient to view $(M,g)$ as a strictly convex region of a larger smooth manifold $(\hat{M},\hat{g})$ 
with strictly convex boundary, and to extend the geodesic vector field $X$ on $SM$ 
into a vector field $X_0$ on $S\hat{M}$ which has complete flow, for instance by making $X_0$ vanish at 
$\pl S\hat{M}$. 

Let us describe this construction.
Near the boundary $\pl M$, let $(\rho,z)$ be normal coordinates to the boundary, 
ie. $\rho$ is the distance function to $\pl M$ satisfying $|d\rho|_g=1$ near $\pl M$ and $z$ are coordinates on $\pl M$. The metric then becomes 
$g=d\rho^2+h_\rho$ in a collar neighborhood $[0,\delta]_\rho \x \pl M$ of $\pl M$ 
for some smooth 1-parameter family $h_\rho$ of metrics on $\pl M$ and the strict convexity 
condition means that the second fundamental form $-\pl_\rho h_\rho|_{\rho=0}$ is a positive definite symmetric cotensor. 
We extend smoothly $h_\rho$ from $\rho\in [0,\delta]$ to $\rho\in [-1,\delta]$ 
as a family of metrics on $\pl M$ satisfying $-\pl_\rho h_\rho>0$ for all $\rho\in [-1,0]$. We 
can then view $M$ as a strictly convex region inside a larger manifold $M_e$ with strictly convex boundary as follows.  First, let 
$E=\pl M\x [-1,0]_\rho$ be the closed cylindrical manifold, and consider  
the connected sum $\hat{M}:=M\sqcup E$ where we glue the boundary $\{\rho=0\}\simeq \pl M$ of $E$ to the boundary 
$\pl M$ of $M$; then we put a smooth structure of manifold with boundary on $\hat{M}$ extending the smooth structure of $M$, we extend the metric $g$ smoothly from $M$ to $\hat{M}$ by setting $\hat{g}= d\rho^2+h_\rho$ in $E$.
Each hypersurface $\{\rho=c\}$ with $c\in [-1,0]$ is strictly convex.
We now set the extension
\[M_e:= \{y\in \hat{M}; \,\, y \in M\textrm{ or } y\in E \textrm{ and } \rho(y) \in [-\eps,0]\}\]
of $M$ for $\eps>0$ fixed small, so that $(M_e,g)$ is a manifold with strictly convex boundary containing $M$ and contained in $\hat{M}$. It is easily checked that the longest connected geodesic ray in $SM_e\setminus SM^\circ$ has length bounded by some $L<\infty$.
When $(M,g)$ has no conjugate point and hyperbolic trapped set, 
it is possible to choose $\eps$ small enough so that $(M_e,g)$ has no conjugate point either (see Section \ref{stable/unstable}), and we will do so each time we shall assume that $(M,g)$ has no conjugate point.
We denote by $X$ the geodesic vector field on 
the unit tangent bundle $S\hat{M}$ of $\hat{M}$ with respect to the extended metric $g$.
Let us define $\rho_0\in C^\infty(\hat{M})$ so that near $E$,  $\rho_0=F(\rho)$ is a smooth nondecreasing function of $\rho$ satisfying $F(\rho)=\rho+1$ near $\rho=-1$, and so that $\{\rho_0=1\}=M_e$. 
Denote by $\pi_0:S\hat{M}\to \hat{M}$ the projection on the base, then the rescaled vector field 
$X_0:= \pi_0^*(\rho_0) X$ 
on $S\hat{M}$ has the same integral curves as $X$, it is complete and $X_0=X$ in the neighborhood 
 $SM_e$ of $SM$. The flow at time $t$ of $X_0$ is denoted $\varphi_t$, and by strict convexity of 
 $M$ (resp. $M_e$) in $\hat{M}$, $\varphi_t$ is also the flow of $X$ in the sense that for all $y$ in $SM$ (resp. 
 in $SM_e$) one has $\pl_t\varphi_t(y)=X(\varphi_t(y))$ for $t\in [0,t_0]$ as long as $\varphi_{t_0}(y)\in SM$ (resp. 
 $\varphi_{t_0}(y)\in SM_e$). 
 
 We shall denote $M^\circ$ and $M_e^{\circ}$ for the interior of $M$ and  $M_e$.

\subsection{Incoming/outgoing tails and trapped set.} \label{incoming/outgoing}
We define the incoming (-), outgoing (+) and tangent (0) boundaries of $SM$ and $SM_e$
\[ \begin{gathered}
\pl_\mp SM:=\{ (x,v)\in \pl SM;  \pm d\rho(X)> 0\},\quad
\pl_\mp SM_e:=\{ (x,v)\in \pl SM_e;  \pm d\rho(X)> 0\},\\
\pl_0SM=\{ (x,v)\in \pl SM;  d\rho(X)= 0\}, \quad \pl_0SM_e=\{ (x,v)\in \pl SM;  d\rho(X)= 0\}.
\end{gathered}\]
For each point $(x,v)\in SM$, define the time of escape of $SM$ in positive (+) and negative (-) time:
\begin{equation}\label{ellpm} 
\begin{gathered}
\ell_+ (x,v)=\sup\, \{ t\geq 0; \varphi_{t}(x,v)\in SM\}\subset [0,+\infty],\\
\ell_- (x,v)=\inf\, \{ t\leq 0; \varphi_{t}(x,v)\in SM\}\subset [-\infty,0].
\end{gathered}
\end{equation}

\begin{defi}
The incoming (-) and outgoing (+) tail in $SM$ are defined by 
\[
\Gamma_\mp =\{(x,v)\in SM;  \ell_\pm(x,v)=\pm \infty\}=\bigcap_{t\geq 0}\varphi_{\mp t}(SM)
\]
and the trapped set for the flow on $SM$ is the set
\begin{equation}\label{trapped} 
K:=\Gamma_+\cap \Gamma_-=\bigcap_{t\in \rr}\varphi_t(SM).
\end{equation}
\end{defi}
We note that $\Gamma_\pm$ and $K$ are closed set and that $K$ is globally invariant by the flow. By the strict convexity of $\pl M$, the set $K$ is a compact subset of $SM^\circ$ since for all $(x,v)\in \pl SM$, 
 $\varphi_t(x,v)\in S\hat{M}\setminus SM$ for either all $t>0$ or all $t<0$.
%The set $K$ is \emph{locally maximal} in the terminology of \cite[Def 6.4.18]{KaHa}.

Moreover, it is easy to check (\cite[Lemma 2.3]{DyGu2}) that $\Gamma_\pm$ are characterized by 
\begin{equation}\label{Gammapm} 
y\in \Gamma_\pm \iff d(\varphi_t(y),K)\to 0 \textrm{ as }t\to \mp \infty
\end{equation}
where $d(\cdot,\cdot)$ is the distance induced by the Sasaki metric. 
We then extend $\Gamma_\pm$ to $S\hat{M}$ by using the characterization \eqref{Gammapm}; the sets  
$\Gamma_\pm$ are closed flow invariant subsets of the interior $S\hat{M}^\circ$ of $S\hat{M}$. By strict convexity of the 
hypersurfaces $\{\rho=c\}$ with $c\in (-1,0]$, each point $y\in S\hat{M}$ with $\rho(y) \in (-1,0]$ is such that 
$d(\varphi_t(y),\pl S\hat{M})\to 0$ either as $t\to +\infty$ or $t\to -\infty$, and thus for all $c\in(0,1)$
\[K=\bigcap_{t\in\rr}\varphi_t(\{\rho_0 \geq c\})=\bigcap_{t\in\rr}\varphi_t(SM_e).\]
We also remark that the strict convexity of $\pl M$ and $\pl M_e$ implies
\begin{equation}\label{intersectGamma}
\Gamma_\mp\cap \pl SM=\Gamma_\mp\cap \pl_\mp SM, \quad 
\Gamma_\mp\cap \pl SM_e=\Gamma_\mp\cap \pl_\mp SM_e.
\end{equation}
Using the flow invariance of Liouville measure in $SM_e$, it is direct to check that 
(see the proof of Theorem 1  in \cite[Section 5.1]{DyGu1}) 
\begin{equation}\label{VolK} 
{\rm Vol}(K)=0\iff {\rm Vol}(SM_e\cap (\Gamma_-\cup \Gamma_+))=0.\end{equation}
where the volume is taken with respect to the Liouville measure.

The hyperbolicity of the trapped set $K$ is defined in the Introduction, and there is a flow-invariant 
continuous splitting of $T_{K}^*(SM)$ dual to \eqref{hypdecomp}, defined as follows: for all $y\in K$,
$T_y^*(SM)=E_0^*(y)\oplus E_s^*(y)\oplus E_u^*(y)$ where 
\[E_u^*(E_u\oplus \rr X)=0,\quad E_s^*(E_s\oplus \rr X)=0, \quad E_0^*(E_u\oplus E_s)=0.\]
We note that $E_0^*=\rr \alpha$ where $\alpha$ is the Liouville $1$-form.

\begin{figure}
%\centering
%\def\svgwidth{15em}
\includegraphics[scale=0.5]{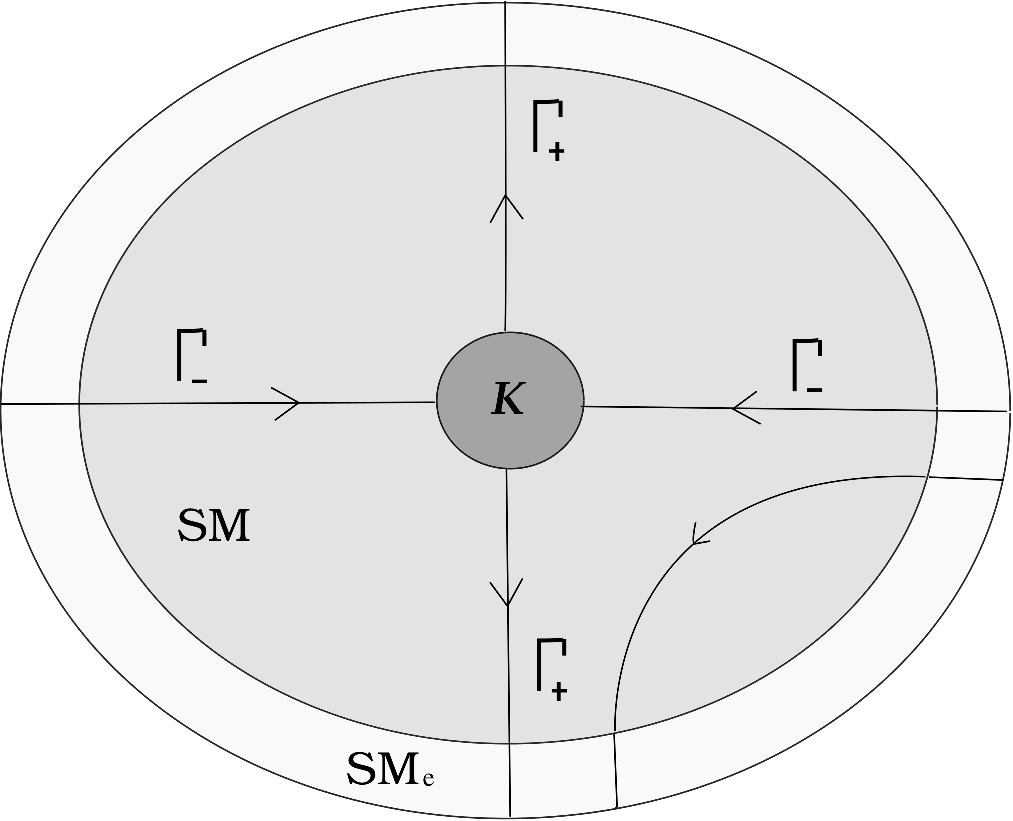}
\caption{The manifold $SM$ and $SM_e$}
\end{figure}

\subsection{Stable and unstable manifolds}\label{stable/unstable} Let us recall a few properties of flows with hyperbolic invariant sets, we refer to Hirsch-Palis-Pugh-Shub \cite[Sec 5 and 6]{HPPS}, Bowen-Ruelle \cite{BoRu} and Katok-Hasselblatt \cite[Chapters 17.4, 18.4]{KaHa} for details. 
For each point $y\in K$, there exist \emph{global stable and unstable manifolds} $W_s(y)$ and  $W_u(y)$ defined by 
\[\begin{gathered} 
W_s(y):=\{ y' \in S\hat{M}^\circ; d(\varphi_t(y),\varphi_t(y'))\to 0, t\to +\infty\},  \\ 
 W_u(y):=\{ y' \in S\hat{M}^\circ; d(\varphi_t(y),\varphi_t(y'))\to 0, t\to -\infty\}
\end{gathered}\] 
which are smooth injectively immersed connected manifolds. 
There are \emph{local stable/unstable manifolds} $W^\eps_s(y)\subset W_s(y)$, $W^\eps_u(y) 
\subset W_u(y)$ which are properly embedded disks containing $y$, defined by 
 \[ \begin{split}
 W^\eps_s(y):= & \{ y' \in W_s(y); \, \forall t\geq 0, \, d(\varphi_t(y),\varphi_t(y'))\leq \eps\},\\
  W^\eps_u(y):= & \{ y' \in W_u(y); \, \forall t\geq 0, \, d(\varphi_{-t}(y),\varphi_{-t}(y'))\leq \eps\}
 \end{split}\]
for some small $\eps>0$, 
%and satisfying for all $\delta>0$: there is $C_\delta>0$
\[ \begin{gathered}
 \varphi_t(W^\eps_s(y))\subset W^\eps_s(\varphi_t(y)) \textrm{ and }
\varphi_{-t}(W_u^\eps(y))\subset W^\eps_u(\varphi_{-t}(y)), \\
T_yW^\eps_s(y)=E_s(y), \textrm{ and }T_yW^\eps_u(y)=E_u(y), \\
%3) & \quad \forall y'\in W^\eps_s(y),\, \forall t\geq 0,\,\,  d(\varphi^t(y),\varphi^t(y'))\leq  C_\delta e^{(-\nu+\delta)t}d(y',y),\\
 %& \quad \forall y'\in W^\eps_u(y),\, \forall t\geq 0,\,\,  d(\varphi^{-t}(y),\varphi^{-t}(y'))\leq  C_\delta e^{(-\nu+\delta)t}d(y',y).
\end{gathered}\]
The regularity of $W_u(y)$ and $W_s(y)$ with respect to $y$ is H\"older. We also define
\[\begin{gathered} 
W_s(K):=\cup_{y\in K}W_s(y), \quad W_u(K):=\cup_{y\in K}W_u(y), \\
W_s^\eps(K):=\cup_{y\in K}W_s^\eps(y), \quad W^\eps_u(K):=\cup_{y\in K}W^\eps_u(y). 
\end{gathered}\]
The incoming/outgoing tails are exactly the global stable/unstable manifolds of $K$:
\begin{lemm}\label{Gamma-W}
If the trapped set $K$ is hyperbolic, then the following equalities hold
\[ \Gamma_-=W_s(K), \quad \Gamma_+=W_u(K).\]
\end{lemm}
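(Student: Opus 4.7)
The plan is to establish both equalities by a pair of inclusions; the two statements are symmetric under time reversal, so I focus on $\Gamma_-=W_s(K)$ and only indicate how $\Gamma_+=W_u(K)$ follows.

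For the easy inclusion $W_s(K)\subset \Gamma_-$, pick $y'\in W_s(y_0)$ with $y_0\in K$. By definition of $W_s(y_0)$, $d(\varphi_t(y'),\varphi_t(y_0))\to 0$ as $t\to+\infty$, while flow-invariance of $K$ gives $\varphi_t(y_0)\in K$ for all $t\in\rr$. Hence $d(\varphi_t(y'),K)\to 0$, which by the characterization \eqref{Gammapm} (extended to $S\hat M^\circ$) is precisely $y'\in\Gamma_-$.

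For the reverse inclusion $\Gamma_-\subset W_s(K)$, the key structural fact is that $K$ is a \emph{locally maximal} (isolated) hyperbolic set: by construction $K=\bigcap_{t\in\rr}\varphi_t(SM_e)$ is the maximal flow-invariant subset of the compact neighborhood $SM_e$. The standard local stable manifold theorem for isolated hyperbolic sets of flows (see \cite{HPPS} and \cite[Ch.~17--18]{KaHa}) then supplies constants $\eps,\delta>0$ and a $\delta$-neighborhood $U$ of $K$ in $S\hat M$ with the following property: for every $z\in U$ whose forward orbit $\{\varphi_t(z):t\geq 0\}$ stays in $U$, there exists $y_0\in K$ with $z\in W_s^\eps(y_0)$. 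Given any $y\in \Gamma_-$, the characterization \eqref{Gammapm} yields $d(\varphi_t(y),K)\to 0$ as $t\to+\infty$, so I can choose $t_0\geq 0$ with $\varphi_t(y)\in U$ for all $t\geq t_0$. Applying the above to $z:=\varphi_{t_0}(y)$ produces $y_0\in K$ with $\varphi_{t_0}(y)\in W_s^\eps(y_0)$. Flowing back by $t_0$ and using $\varphi_{-t_0}(W_s^\eps(y_0))\subset W_s(\varphi_{-t_0}(y_0))$ (immediate from the definition of the global stable manifold and the semigroup property), I obtain $y\in W_s(\varphi_{-t_0}(y_0))\subset W_s(K)$, since $\varphi_{-t_0}(y_0)\in K$ by flow-invariance of $K$.

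The identity $\Gamma_+=W_u(K)$ follows by the same argument applied to the reversed flow $\varphi_{-t}$, under which the roles of $\Gamma_\pm$ and of $W_{s/u}$ exchange. The main (and essentially only) nontrivial ingredient is the local stable manifold / local product structure statement for isolated hyperbolic sets; once it is invoked, the proof is purely formal manipulation of the definitions together with flow-invariance of $K$. The one point requiring some care is to use the version of the stable manifold theorem phrased for flows (with neutral direction $\rr X$) rather than for diffeomorphisms, and to verify that the isolating neighborhood $U$ may be taken inside $SM_e$ so that the cited theorems in \cite{HPPS,KaHa} apply verbatim; no other step looks like a genuine obstacle.
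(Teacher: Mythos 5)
Your proof is correct and follows essentially the same route as the paper: the easy inclusion via \eqref{Gammapm}, and the reverse inclusion by invoking the Hirsch--Palis--Pugh--Shub shadowing/local stable manifold result for isolated hyperbolic sets, which is exactly what the paper cites (\cite[Lem.~3.2 and Th.~5.2]{HPPS}) after observing that $K=\bigcap_t\varphi_t(SM_e)$ is locally maximal and hence has local product structure. The only cosmetic difference is that you invoke local maximality directly rather than passing through the (equivalent) statement $W_s^\eps(K)\cap W_u^\eps(K)\subset K$.
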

\begin{proof} By \eqref{Gammapm}, $W_s(K)\subset \Gamma_-$ and $W_u(K)\subset \Gamma_+$.
Then $W^\eps_s(K)\cap W_u^\eps(K)\subset K$,  and thus $K$ has a 
local product structure in the sense of \cite[Definition p.272 ]{KaHa}. 
Now from this local product structure,  \cite[Lemma 3.2 and Theorem 5.2]{HPPS} show that for any $\eps>0$ small, there is an open neighbourhood $V_K$ of $K$ such that 
\begin{equation}\label{VK} 
\{y\in SM_e; \varphi_t(y)\in V_K, \forall t\geq 0\} \subset W^\eps_s(K)
\end{equation}
which means that any trajectory which is close enough to $K$ is on the local stable manifold 
for $t$ large enough. The same hold for negative time and unstable manifold. A point $y\in \Gamma_-$ satisfies 
$d(\varphi_t(y),K)\to 0$ as $t\to +\infty$, thus for $t$ large enough the orbit reaches $V_K$ and thus 
$\varphi_t(y)\in W_s^\eps(K)$ for $t\gg 1$ large. We conclude that $y\in W_s(K)$. Similarly $\Gamma_+\subset W_u(K)$ and this achieves the proof.
\end{proof} 
%We now prove a property of stable/unstable leaves. 
%\begin{lemm}\label{stableunst}
%Assume that $\Gamma_+$ has Liouville measure $0$ in $SM$. 
%Let $y\in K$ be a periodic  point for the flow, then $W_s(y)\cap \pl SM\not=\emptyset$ and  $W_u(y)\cap \pl SM\not=\emptyset$.
%\end{lemm}
%\begin{proof}
%First, if $W_s(y)\subset K$, by \cite[Lemma 4.1]{HPPS} the set
%\[U_y:=\bigcup_{y'\in \cup_{|t|<\eps}\varphi_t(W_s^\eps(y))} W^\eps_u(y')\] 
%is a neighborhood of $y$, and since $\Gamma_+$ contains $U_y$, this contradicts ${\rm Vol}(\Gamma_+)=0$. If $W_s(y)\cap \pl SM=\emptyset$, by connectedness of $W_s(y)$, 
%one has that for all $y'\in W_s(y)$, there is a sequence of times $t_n\to +\infty$ so that 
%$\varphi_{-t_n}(y')\in W_s(y) \subset SM^\circ$
%for all $n\in \nn$ if $y$ is on a periodic orbit and thus $W_s(y)\subset K$, which gives a contradiction. The same arguments work for $W_u(y)\cap \pl SM\not=\emptyset$.
%\end{proof}
For each  $y_0\in K$, we extend the notion of stable susbpace, resp. unstable subspace, to points on the $W^\eps_s(y_0)$ submanifold, resp. $W^\eps_u(y_0)$ submanifold, by 
\[ E_-(y):=T_yW^\eps_s(y_0) \textrm{ if }y\in W^\eps_s(y_0), 
\quad E_+(y):=T_yW^\eps_u(y_0) \textrm{ if }y\in W^\eps_u(y_0).\]
These subbundles can be extended to subbundles $E_\pm \subset T_{\Gamma_\pm}SM_e$ over $\Gamma_\pm$ in a flow invariant way (by using the flow), and we can define the subbundles 
$E_\pm^*\subset T_{\Gamma_\pm}^*SM_e$ by 
\begin{equation}\label{Epm^*}
 E_\pm^* (E_\pm\oplus \rr X)=0 \textrm{ over }\Gamma_\pm.
 \end{equation}
By \cite[Lemma 2.10]{DyGu2}, these subbundles are continuous, invariant by the flow and satisfy the following properties (we use Sasaki metric on $SM$):

\noindent 1) there exists $C>0,\gamma>0$ such that for all $y\in\Gamma_\pm$ and $\xi\in E_\pm^*(y)$, then 
\begin{equation}
  \label{estimE^*}
||d\varphi_t^{-1}(y)^{T}\xi||\leq  Ce^{-\gamma |t|}||\xi||, \quad \,\, \mp t>0
\end{equation}
2) for $(y,\xi)\in T^*_{\Gamma_\pm} SM_e$ such that $\xi\notin E_\pm^*$ and $\xi(X)=0$, then 
\begin{equation}\label{notinEpm^*}
||d\varphi_t^{-1}(y)^T\xi||\to \infty \,\,\textrm{ and } \,\, \frac{d\varphi_t^{-1}(y)^T\xi}{||d\varphi_t^{-1}(y)^T\xi||}\to 
E^*_\mp|_K \,\,\textrm{ as }\,\,t\to \mp ±\infty,
\end{equation}
3) The bundles $E_\pm^*$ extend $E_s^*$ and $E_u^*$ in the sense that $E_-^*|_K=E_s^*$ and $E_+^*|_K=E_u^*$.\\

The dependance of $E_\pm^*(y)$ with respect to $y$ is only H\"older.  The bundles $E_\pm^*$ can be thought of as 
conormal bundles to $\Gamma_\pm$ (this set is a union of smooth leaves parametrized by the 
set $K$ which has a fractal nature). 
The differential of the flow $d\varphi_t$ is exponentially contracting on each fiber $E_-(y)$, the proof of 
Klingenberg \cite[Proposition p.6]{Kl} shows 
\begin{equation}\label{klingenb}
\varphi_t \textrm{ has no conjugate points } \Longrightarrow  E_-\cap V=\{0\}\,\,   \textrm{if }V:=\ker d\pi_0
\end{equation}
where $\pi_0:SM\to M$ is the projection on the base. Similarly, $E_+\cap V=\{0\}$ in that case.
These properties imply
\begin{lemm}\label{extconj}
If $(M,g)$ has hyperbolic trapped set, strictly convex boundary, and no conjugate points, we can choose $\eps>0$ small enough in Section \ref{subset:extension}  so that the extension $(M_e,g)$ has not conjugate points. 
\end{lemm}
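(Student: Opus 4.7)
The plan is to argue by contradiction. Assuming the conclusion fails, for each $\eps_n\downarrow 0$ there must exist $y_n\in SM_{e,n}$ and times $t_n>0$ with $\varphi_s(y_n)\in SM_{e,n}$ for all $s\in[0,t_n]$ and with $\gamma_n(0),\gamma_n(t_n)$ conjugate along $\gamma_n:=\pi_0\circ\varphi_\cdot(y_n)$; equivalently, writing $V=\ker d\pi_0$,
\[
V_{y_n}\cap d\varphi_{-t_n}(V_{\varphi_{t_n}(y_n)})\neq\{0\}\quad\text{mod }\rr X.
\]
Taking $t_n$ to be the first conjugate time, the standard curvature-based lower bound on the compact manifold $\hat{M}$ gives $\liminf t_n>0$. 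I would then split into the cases $\limsup t_n<\infty$ and $t_n\to\infty$.

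In the first case, after extraction $t_n\to t_\infty\in(0,\infty)$ and $y_n\to y_\infty\in SM$ (using $\rho(y_n)\geq -\eps_n\to 0$). Since $\rho(\varphi_s(y_\infty))\geq 0$ for all $s\in[0,t_\infty]$, the limit geodesic segment lies in $M$. The conjugate-pair condition is closed, so $\gamma_\infty(0),\gamma_\infty(t_\infty)$ are conjugate along a segment of $\gamma_\infty$ entirely contained in $M$, contradicting the no-conjugate-points assumption on $(M,g)$.

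For $t_n\to\infty$, I look at the midpoint $z_n:=\varphi_{t_n/2}(y_n)$: its forward and backward escape times from $SM_{e,n}$ are both $\geq t_n/2$. Concavity of $\rho$ along geodesics (from strict convexity of the levels $\{\rho=c\}$) forces the time any geodesic spends in the collar to be $O(\sqrt{\eps_n})$, so the corresponding escape times from $SM$ also go to infinity, and, extracting, $z_n\to z_\infty\in K$. The conjugate condition at the midpoint then reads: the Lagrangian subspaces
\[
L_n^+:=d\varphi_{t_n/2}(V_{y_n}),\qquad L_n^-:=d\varphi_{-t_n/2}(V_{\varphi_{t_n}(y_n)})
\]
of $T_{z_n}(SM)$ (mod $\rr X$) intersect nontrivially. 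Using \eqref{klingenb}, which gives $V\cap E_\pm=\{0\}$ on $\Gamma_\pm$, together with the alignment statement \eqref{notinEpm^*}, I expect $L_n^+\to E_+(z_\infty)$ and $L_n^-\to E_-(z_\infty)$ in the Lagrangian Grassmannian. The hyperbolic splitting \eqref{hypdecomp} then gives $E_+(z_\infty)\cap E_-(z_\infty)=\{0\}$, so the limits are transverse and $L_n^+\cap L_n^-=\{0\}$ for $n$ large---a contradiction.

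The principal obstacle is this last convergence: $y_n$ and $\varphi_{t_n}(y_n)$ need not lie on $\Gamma_\pm$, only close to them in the sense of having one-sided escape times tending to infinity. Making this rigorous requires the uniform exponential estimates \eqref{estimE^*}, continuity of $E_\pm$ on the compact set $\Gamma_\pm\cap\overline{SM}$, and the transversality of $V$ and $E_-$ along $\Gamma_-$ coming from \eqref{klingenb}; after shifting $y_n$ by $O(\sqrt{\eps_n})$ into $SM$, these ingredients let one propagate the vertical subspace to the unstable direction under forward flow, and symmetrically for the backward direction.
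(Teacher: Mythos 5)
Your proof is correct and follows essentially the same strategy as the paper: argue by contradiction, split according to whether the conjugate times $t_n$ stay bounded or tend to infinity, handle the bounded case by passing to a conjugate pair inside $M$, and in the unbounded case use hyperbolicity of $K$ (via the alignment property \eqref{notinEpm^*}, i.e.\ Lemma~2.11 of \cite{DyGu2}) together with $E_\pm\cap V=\{0\}$ from \eqref{klingenb}. The only difference is cosmetic: you track the two propagated vertical Lagrangians at the midpoint and contradict $E_+(z_\infty)\cap E_-(z_\infty)=\{0\}$, whereas the paper propagates the vertical vector $w_n$ the full time $t_n$ to the far endpoint and contradicts $E_+\cap V=\{0\}$ there.
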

\begin{proof}
Indeed if it were not the case, there would be (by compactness) a sequence of points $(x_n,v_n)\in SM_e\setminus SM$ 
converging to $(x,v)\in \pl_-SM\cup \pl_0SM$ and $(x'_n,v'_n)\in SM_e$  converging to $(x',v')\in SM$, 
and geodesics $\gamma_n$ passing through $(x_n,v_n)$ and $(x'_n,v'_n)$, with $x_n$ and $x'_n$ being 
conjugate points for the flow of the extension of $g$. Note that $(x,v)=(x',v')$ is prevented by strict convexity of $\pl M$. 
By compactness, if the length of $\gamma_n$ is 
bounded, we deduce that $x,x'$ are conjugate points on $M$, 
which is not possible by assumption. There remains the case 
where the length of $\gamma_n$ is not bounded, we can take a subsequence so that the length $t_n\to +\infty$.
Then $(x,v)\in \Gamma_-$, and there is $w_n\in V=\ker d\pi_0$ of unit norm for Sasaki metric such that
$d\varphi_{t_n}(x_n,v_n).w_n\in V$. We can argue as in the proof of \cite[Lemma~2.11]{DyGu2}: 
by hyperbolicity of the flow on $K$,  for $n$ large enough, $d\varphi_{t_n}(x_n,v_n).w_n$ will be in an arbitrarily small 
conic neighborhood of $E_+$, thus it cannot be in the vertical bundle $V$. This completes the argument.
\end{proof}
Finally, let us denote by
\begin{equation}\label{iota} 
\iota_{\pm}: \pl_\pm SM\to SM_e, \quad \iota: \pl SM\to SM_e 
\end{equation} 
the inclusion map, and define 
\begin{equation}\label{Epmbound}
E_{\pl,\pm}^*:= (d\iota_{\pm})^TE_\pm^* \subset T^*(\pl_\pm SM).
\end{equation}

\subsection{Escape rate}\label{sec:escaperate}
An important quantity in the study of open dynamical systems is the \emph{escape rate}, which measures the 
amount of mass not escaping for long time. This quantity was studied for hyperbolic dynamical systems by 
Bowen-Ruelle, Young \cite{BoRu,Yo}. 
First we define the \emph{non-escaping mass function} $V(t)$ as follows
\begin{equation}\label{nonescape}
\begin{gathered}
V(t):={\rm Vol}(\mc{T}_+(t)), \textrm{ with } \\
\mc{T}_{\pm}(t):=\{ y\in SM;\, \varphi_{\pm s}(y)\in SM \textrm{ for } s\in[0,t]\}.
\end{gathered}\end{equation}
and ${\rm Vol}$ being the volume with respect to the Liouville 
measure $d\mu$.
The \emph{escape rate} $Q\leq 0$ measures the exponential rate of decay of $V(t)$ 
\begin{equation}
\begin{gathered}
\label{defQ}
Q:=\limsup_{t\to +\infty}\frac{1}{t}\log V(t).
\end{gathered}
\end{equation}
Notice that, since $\varphi_t$ preserves the Liouville measure in $SM$, we have 
\[{\rm Vol}(\mc{T}_+(t))=
{\rm Vol}(\mc{T}_-(t))\]
since the second set is the image of the first set by $\varphi_t$. Consequently, we also have
$Q=\limsup_{t\to +\infty}\frac{1}{t}\log {\rm Vol}(\mc{T}_-(t)).$
We define $J_u$ the unstable Jacobian of the flow   
\[J_u(y):=-\pl_{t}(\det d\varphi_t(y)|_{E_u(y)})|_{t=0}\]
where the determinant is defined using the Sasaki metric (to choose orthonormal bases 
in $E_u$).  
The \emph{topological pressure} of a continuous function $f:K\to \rr$ with respect to 
$\varphi_t$ can be defined by the variational formula  
$
P(f):=\sup_{\nu\in {\rm Inv}(K)}(h_{\nu}(\varphi_1)+\int_K f\, d\nu)
$
where ${\rm Inv}(K)$ is the set of $\varphi_t$-invariant Borel probability
measures and $h_\nu(\varphi_1)$ is the measure theoretic entropy of the flow
at time $1$ with respect to $\nu$ (e.g. $P(0)$ is just the
topological entropy of the flow).

We gather two results of Young \cite[Theorem~4]{Yo} and Bowen-Ruelle \cite[Theorem~5]{BoRu} on the escape rate in our setting.
\begin{prop}\label{youngbowenruelle}
If the trapped set $K$ is hyperbolic, the escape rate $Q$ is negative and given by the formula 
\begin{equation}
\label{youngformula}
Q=P(J_u).
\end{equation}
\end{prop}
\begin{proof}
Formula \eqref{youngformula} is proved by Young \cite[Theorem~4]{Yo} 
and follows directly from the volume lemma of Bowen-Ruelle \cite{BoRu}. 
The pressure $P(J_u)$ of  the unstable Jacobian  $J_u$ for $\varphi_1$ on $K$ is equal to the pressure 
$P(J_u|_{\Omega})$ of $J_u$ for $\varphi_1$ on the non-wandering set $\Omega\subset K$ 
of $\varphi_1$, see \cite[Corollary 9.10.1]{Wa}. By the spectral decomposition of hyperbolic flows \cite[Theorem 18.3.1 and Exercise 18.3.7]{KaHa}, the non-wandering set $\Omega$ decomposes into 
finitely many disjoint invariant topologically transitive sets $\Omega=\cup_{i=1}^N\Omega_i$ 
for $\varphi_1$.
By \cite[Corollary 6.4.20]{KaHa}, the periodic orbits of the flow are dense in $\Omega$.
By \cite[Proposition 7.2]{HPPS}, each component $\Omega_i$ of $\Omega$ has local product structure, and thus, according to \cite[Theorem 18.4.1]{KaHa} (see also \cite{HPPS}), it is locally maximal; each $\Omega_i$ is a basic set in the sense of Bowen-Ruelle  \cite{BoRu}. 

Then we can use the result of Bowen-Ruelle \cite[Theorem~5]{BoRu} which gives the following equivalence
\begin{equation}\label{bowenruelleth5} 
P(J_u|_{\Omega_i})<0\iff \Omega_i \textrm{ is not an attractor  for }\varphi_1 \iff {\rm Vol}(W_s(\Omega_i))=0.
\end{equation}
where  $W_s(\Omega_i):=\cup_{y\in \Omega_i}W_s(y)$ is the stable manifold of $\Omega_i$.
Suppose that one of the sets  $\Omega_i$ is an attractor, then $W_s(\Omega_i)$  has positive Liouville measure, implying that ${\rm Vol}(\Gamma_-)>0$, thus ${\rm Vol}(K)>0$ by \eqref{VolK}. 
Since Liouville measure is flow invariant on $K$, we have ${\rm Vol}(K)={\rm Vol}(\Omega)$ by \cite[Theorem 6.15]{Wa} and thus there is $\Omega_j$ with positive Liouville measure. Now we can conclude with the argument of \cite[Corollary 5.7]{BoRu}: ${\rm Vol}(W_s(\Omega_j))>0$ and 
${\rm Vol}(W_u(\Omega_j))>0$ so that $\Omega_j$ is an attractor for both $\varphi_1$ and $\varphi_{-1}$ by \eqref{bowenruelleth5}, and this implies that $W_u(\Omega_i)=\Omega_i$ (as an attractor of $\varphi_1$) 
and $W_u(\Omega_i)$ is open (as an attractor of $\varphi_{-1}$), thus $\Omega_j=K=SM$ since $SM$ is connected. But under our geometric assumption ($\pl M$ is strictly convex) this is not possible. 
We conclude that $Q=P(J^u|_{\Omega})<0$.
\end{proof}

%We remark that each stable manifold $W_s(y)$ intersect $\Gamma_-$ (and obviously not $\Gamma_+$), 
%since otherwise it would be contained in $K$, and thus $\Gamma_+$ would contain the set 
%\[\bigcup_{t\in (-\eps,\eps)}\varphi_t\Big(\bigcup_{y'\in W^\eps_s(y)}W^\eps_u(y')\Big)\] 
%for $\eps>0$, which is a set of positive measure. Similarly, $W_u(y)$ intersect $\Gamma_+$.
This of course implies that ${\rm Vol}(\Gamma_-\cup \Gamma_+)=0$.
Near $\pl_\pm SM$, we have  $\{\varphi_{\mp t}(y)\in SM; t\in[0,\eps), y\in \pl_\pm SM\cap \Gamma_\pm\}\subset \Gamma_\pm$
and since for $U$ a small open neighborhood of $\pl_\pm SM\cap \Gamma_\pm$ the map
\[(t,y) \in [0,\eps)\x U\mapsto \varphi_{\mp t}(y)\in SM\] 
is a smooth diffeomorphism onto its image (the vector field $X$ is transverse to $\pl_\pm SM$ near $\Gamma_\pm$ by \eqref{intersectGamma}), 
we get 
\begin{equation}\label{VolGammapl}
{\rm Vol}_{\pl SM}(\Gamma_\pm\cap \pl_\pm SM)=0;
\end{equation} 
where the measure on $\pl SM$ is denoted $d\mu_{\pl SM}$ and given by $d\mu_{\pl SM}(x,v)=|{\rm dvol}_{h}(x)\wedge dS_x(v)|$ with $h=g|_{\pl M}$ and $dS_x(v)$ the volume form on the sphere $S_xM$.

The flow on $SM_e$ shares the same properties as on $SM$ and the trapped set on $SM$ and on $SM_e$ are the same, the discussion above holds as well for $SM_e$, and in particular \begin{equation}
\label{Qe+}
Q=\limsup_{t\to +\infty}\frac{1}{t}\log {\rm Vol}(\{ y\in SM_e;\, \varphi_{\pm s}(y)\in SM_e \textrm{ for }s\in[0, t]\})<
0.
\end{equation}
\subsection{Santalo formula}
There is a measure on $\pl SM$ which comes naturally when considering geodesic flow in $SM$, we denote 
it  $d\mu_\nu$ and it is given by 
\begin{equation}\label{dmunu}
d\mu_\nu(x,v):= |\cjg v,\nu\cjd|\, d\mu_{\pl SM}(x,v)= |\cjg v,\nu\cjd|\,|{\rm dvol}_{h}(x)\wedge dS_x(v)|.
\end{equation}
where $\nu$ is the inward unit normal vector field to $\pl M$ in $M$. This measures is also equal to 
$|\iota^*(i_X\mu)|$.
When ${\rm Vol}(\Gamma_-\cup \Gamma_+)=0$, then \eqref{VolGammapl} holds and we can apply Santalo formula \cite{Sa} to integrate functions in $SM$, this gives us: 
for all $f\in L^1(SM)$
\begin{equation}\label{santalo} 
\int_{SM}fd\mu = \int_{\pl_-SM\setminus \Gamma_-}\int_{0}^{\ell_+(x,v)}f(\varphi_t(x,v)) dt \,d\mu_\nu(x,v)
\end{equation}
with $\ell_+$ defined in \eqref{ellpm}. Extending $f$ to $SM_e$ by $0$ in $S\hat{M}\setminus SM$, \eqref{santalo} can also be rewritten
\begin{equation}\label{santalo2}
\int_{SM}fd\mu = \int_{\pl_-SM\setminus \Gamma_-}\int_{\rr}f(\varphi_t(x,v)) \, dt \,d\mu_\nu(x,v).
\end{equation}

\section{The scattering map and lens equivalence}
In the setting of a compact Riemannian manifold $(M,g)$ with strictly convex boundary $\pl M$, 
we define the \emph{scattering map} by
\begin{equation}\label{scattering} 
S_g :  \pl_-SM\setminus \Gamma_- \to \pl_+SM\setminus \Gamma_+, \quad S_g(x,v):=\varphi_{\ell_+(x,v)}(x,v)
\end{equation}
where $\ell_+(x,v)$ is the length of the geodesic $\pi_0(\cup_{t\in\rr}\varphi_t(x,v)))\cap M$, as defined in
\eqref{ellpm}. 
\begin{defi}
Let $(M_1,g_1)$ and $(M_2,g_2)$ be two Riemannian manifolds with the same boundary and such that $g_1=g_2$ on $T\pl M_1=T\pl M_2$ and the boundary is strictly convex for both metrics. Let $\nu_i$ be the inward pointing unit normal vector field on $\pl M_i$ and let $\Gamma_-^i\subset SM_i$ the 
incoming tail of the flow for $g_i$. Let $\alpha:\pl SM_1\to \pl SM_2$ be given by 
\begin{equation}\label{alpha}
\alpha(x,v+t\nu_1)=(x,v+t\nu_2) ,\quad \forall (v,t)\in T_x\pl M_1\x \rr, \,\, |v|_{g_1}^2+t^2=1.
\end{equation} 
Then $(M_1,g_1)$ and $(M_2,g_2)$ are said \emph{scattering equivalent} if 
\[\alpha(\Gamma_-^1)=\Gamma_-^2 , \textrm{ and } 
\alpha\circ S_{g_1}=S_{g_2}\circ \alpha \textrm{ on }\pl SM_1\setminus \Gamma_-^1.\]
%If $M_1=M_2=M$, we can relax the assumption $g_1=g_2$ on $T\pl M_1$ as follows: we say that 
%$g_1$ is scattering equivalent to $g_2$ if for all $(x,v)\in T_{\pl M}M$ with $v\not=0$ pointing inside $M$,
%$S_{g_1}(v/|v|_{g_1})=S_{g_2}(v/|v|_{g_2})$. 
Finally $g_1$ and $g_2$ are said \emph{lens equivalent} if they are scattering equivalent and for any $(x,v)\in \pl_-SM_1\setminus \Gamma_-^1$, 
the length $\ell^1_{+}(x,v)$ of the geodesic generated by $(x,v)$ in $M_1$ for $g_1$ 
is equal to the length $\ell^2_{+}(\alpha(x,v))$ of the geodesic generated by $\alpha(x,v)$ in $M_2$ for $g_2$. 
\end{defi} 

Let us show that for the case of surfaces, if $K$ is hyperbolic and $g$ has no conjugate points 
then $S_g$ determines the space $E_{\pl,\pm}^*$, this will be useful in Theorem \ref{scatrig}
\begin{lemm}\label{SgdetermE}
Let $(M,g)$ be a surface with strictly convex boundary. Assume that   
$K$ is hyperbolic and that the metric has no conjugate points. 
Then the scattering map $S_g$ determines  $E_{\pl, \pm}^*$.
\end{lemm}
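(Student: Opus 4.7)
The plan is to identify $E_{\pl,-}^*(y)$ at each $y\in \pl_-SM\cap\Gamma_-$ as the limiting direction of pullback covectors $(dS_g)^T\eta_n$ obtained by approaching $y$ from outside $\Gamma_-$, exploiting the dynamical characterization \eqref{notinEpm^*}. The argument for $E_{\pl,+}^*$ is symmetric, applied to $S_g^{-1}$.

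First I note that $\Gamma_\pm\cap\pl_\pm SM$ is the complement in $\pl_\pm SM$ of the domain, respectively image, of $S_g$, so it is immediately recovered from the scattering map. Since $V(t)\to 0$ implies ${\rm Vol}(K)=0$ and hence (via the argument preceding \eqref{VolGammapl}) that $\Gamma_-\cap\pl_-SM$ has empty interior in $\pl_-SM$, I can take a sequence $y_n\to y$ with $y_n\in \pl_-SM\setminus\Gamma_-$, giving $\ell_+(y_n)\to\infty$. Passing to a subsequence, $\til y_n:=S_g(y_n)$ converges to some $\til y\in \overline{\pl_+SM}$. The inclusion $\varphi_{-t}(\til y_n)\in SM$ for $t\in[0,\ell_+(y_n)]$ together with $\ell_+(y_n)\to\infty$ forces $\varphi_{-t}(\til y)\in SM$ for every $t\geq 0$, so $\til y\in\Gamma_+$ and $E_{\pl,+}^*(\til y)$ is defined.

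Writing $S_g=\varphi_{\ell_+}\circ \iota_-$, the transpose of the differential factors as
\[ (dS_g(y_n))^T \eta = (d\iota_-(y_n))^T \, (d\varphi_{\ell_+(y_n)}(y_n))^T \, \til\eta, \]
where $\til\eta\in T^*_{\til y_n}SM$ is the unique extension of $\eta\in T^*_{\til y_n}\pl_+SM$ with $\til\eta(X(\til y_n))=0$. I choose a smooth covector field $\eta$ on $\pl_+SM$ near $\til y$ with $\eta(\til y)\notin E_{\pl,+}^*(\til y)$, which is equivalent to $\til\eta(\til y)\notin E_+^*(\til y)$. Applying \eqref{notinEpm^*} at $\til y\in\Gamma_+$ with $t=-\ell_+(y_n)\to -\infty$ produces a normalized limit direction in $E_-^*|_K$; the corresponding limit for the pullback based at $\til y_n$ rather than $\til y$ is obtained by a continuity and uniform-hyperbolicity argument in a neighborhood of $\Gamma_+$. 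After restriction to $T_{y_n}\pl_-SM$ via $(d\iota_-)^T$ and normalization, the resulting sequence in $T^*_{y_n}\pl_-SM$ converges (along a subsequence) to a nonzero covector spanning $E_{\pl,-}^*(y)$; the restriction is nonzero because any $\xi\in E_-^*(y)$ annihilates $X(y)$ whereas the conormal $\pi_0^*d\rho$ to $\pl_-SM$ in $SM$ satisfies $\pi_0^*d\rho(X(y))=\cjg v,\nu\cjd>0$.

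Since the construction uses only the knowledge of $S_g$ and its derivative, $E_{\pl,-}^*$ is determined by $S_g$; applying the same procedure to $S_g^{-1}$ yields $E_{\pl,+}^*$. The main technical difficulty is the last continuity step: \eqref{notinEpm^*} is stated only at points of $\Gamma_+$, and I need to propagate the convergence to the basepoints $\til y_n$ that only approach $\Gamma_+$. This should follow from the continuity of the bundles $E_\pm^*$ together with the shadowing/structural stability of the hyperbolic set $K$ recalled in Section \ref{stable/unstable}, exploiting the fact that the orbit $\varphi_t(y_n)$ remains in a fixed neighborhood of $K$ for all $t\in[\eps,\ell_+(y_n)-\eps]$ as $n\to\infty$.
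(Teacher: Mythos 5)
Your overall strategy is the same as the paper's: approach a boundary point of $\Gamma_\mp$ through the domain of $S_g$, transport a test covector by $(dS_g^{\pm1})^T$, and recover $E_{\pl,\mp}^*$ as the normalized limiting direction via the dynamical estimate \eqref{notinEpm^*}. The gap is in the step where you ``choose a smooth covector field $\eta$ \ldots with $\eta(\til y)\notin E_{\pl,+}^*(\til y)$.'' This is circular: $E_{\pl,+}^*$ is one of the two bundles the lemma claims $S_g$ determines, so you cannot certify admissibility of your choice from $S_g$ alone; a bad choice, where $\til\eta(\til y)\in E_+^*(\til y)$, would transport inside the flow-invariant, backward-contracting bundle $E_+^*$ and produce a different limit, with no visible signal of failure. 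The correct way to remove the circularity is to make an a priori choice of $\eta$ and verify it is admissible: take $\eta$ of unit norm in the annihilator of the vertical subbundle $V\cap T(\pl SM)$, so its lift $\til\eta$ annihilates $V\oplus\rr X$; the no-conjugate-points hypothesis enters exactly here, since Klingenberg's criterion \eqref{klingenb} gives $E_\pm\cap V=\{0\}$ and hence $T(SM)=\rr X\oplus V\oplus E_\pm$ over $\Gamma_\pm$ on a surface, so no nonzero covector annihilating $V\oplus\rr X$ can lie in $E_\pm^*$. That your proposal never actually uses the no-conjugate-points hypothesis is the telltale sign of this missing step. The other issue you flag at the end, namely that \eqref{notinEpm^*} is stated on $\Gamma_\pm$ while you apply it at points $\til y_n$ that only approach $\Gamma_+$, is real; the paper discharges it by citing the uniform statement in \cite[Lemma 2.11]{DyGu2} directly, whereas your sketch via continuity of $E_\pm^*$ and shadowing near $K$ is plausible but not a proof.
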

\begin{proof}  All points in $\Gamma_+\cap \pl SM$ are in some unstable leaf $W_u(p)$ for some $p\in K$. 
The unstable leaves are one-dimensional manifolds injectively immersed in $SM_e$ 
and they intersect $\pl SM$ in a set of measure $0$ in $\pl SM$. Above a point $y\in W_u(p)\cap \pl_-SM$, the 
fiber $E_{+,\pl}^*(y)$ is exactly one-dimensional since one has 
$T_ySM=\rr X\oplus V\oplus E_+(y)$ where $V=\ker d\pi_0$ is the vertical bundle which is also tangent to $\pl SM$ 
and $E_-^*(V)\not=0$ if there are no conjugate points (we refer the reader to the proof of 
Proposition \ref{PsidoPi0} below for the discussion about that fact).
 Take a point $y\in W_u(p)\cap \pl_+SM$
and a sequence $y_n\to y$ in $\pl_+SM$ with $y_n\notin\Gamma_+$, then by compactness (by possibly passing to a subsequence) $z_{n}:=S_g^{-1}(y_{n})$ is converging to $z$ in $\Gamma_-\cap \pl SM$ with $t_n:=\ell_+(z_n)\to \infty$. 
We can write $S_g(z_n)=\varphi_{\ell_+(z_n)}(z_n)$.
By Lemma 2.11 in \cite{DyGu2} (in particular its proof), if $\xi_n\in T_{z_n}^*SM$ satisfies $\xi_n(X)=0$ and 
${\rm dist}(\xi_n/||\xi_n||, E_-^*)>\eps$ for some fixed $\eps>0$, then 
$(d\varphi_{t_n}(z_n)^{-1})^T\xi_n/ ||(d\varphi_{t_n}(z_n)^{-1})^T\xi_n||$ tends to $E_+(y)^*\cap S^*(SM)$.
Then we compute for $w_n\in T_{y_n}(\pl SM)$
\[ dS_g^{-1}(y_n).w_n= X(z_n)d\ell_-(y_n).w_n +d\varphi_{-t_n}(y_n).w_n\]
and if $\xi_n\in T_{z_n}^*(\pl SM)$, we can define uniquely $\xi_n^\sharp\in T_{z_n}^*SM$ by $\xi_n^\sharp(X)=0$ and 
$\xi^\sharp_n \circ d\iota=\xi_n$ ($\iota$ is defined in \ref{iota}) so that $(dS_g(z_n)^{-1})^T\xi_n= (d\varphi_{t_n}(z_n)^{-1})^T\xi^\sharp_n$.
We conclude that 
\[(dS_g(z_n)^{-1})^T\xi_n/||(dS_g(z_n)^{-1})^T\xi_n||\to E_+^*(y), \quad n\to +\infty\] 
if 
$\xi_n$ is such that ${\rm dist}(\xi^\sharp_n/||\xi^\sharp_n||, E_-^*)>\eps$. We can for instance take 
$\xi_n$ to be of norm $1$ and in the annulator of $V$ in $T^*\pl SM$, then the desired condition is 
satisfied and this shows that we can recover $E_-^*(y)$ from $S_g$. The same argument with $S_g^{-1}$ instead of $S_g$ shows that $S_g$ determines $E_+^*$. This ends the proof. 
\end{proof}

We can define the \emph{scattering operator} as the pull-back by the inverse scattering map 
\begin{equation}\label{scmap}
\mc{S}_g: C_c^\infty(\pl_-SM\setminus \Gamma_-)\to C_c^\infty(\pl_+SM\setminus \Gamma_+), \quad 
\mc{S}_g\omega_- =\omega_-\circ S_g^{-1}.
\end{equation}
\begin{lemm}\label{BVPsmooth}
For any $\omega_\mp \in C_c^\infty(\pl_\mp SM\setminus \Gamma_\mp)$, there exists a unique function 
$w\in C_c^\infty(SM\setminus (\Gamma_-\cup \Gamma_+))$  satisfying
\begin{equation}\label{invariantu} 
Xw=0 , \quad w|_{\pl_\mp SM}=\omega_\mp
\end{equation}
and this solution satisfies $w|_{\pl_+ SM}=\mc{S}_g\omega_-$ 
(resp. $w|_{\pl_- SM}=\mc{S}^{-1}_g\omega_+$). The function $w$ extends smoothly 
to $SM_e$ in a way that $Xw=0$, this defines a bounded operator
\begin{equation}\label{E_mp}
\mc{E}_\mp : C_c^\infty(\pl_\mp SM\setminus \Gamma_\mp)\to  C^\infty(SM_e),\quad \mc{E}_\mp(\omega_\mp):=w
\end{equation} 
which satisfies the identity $\mc{E}_+\mc{S}_g=\mc{E}_-$.
\end{lemm}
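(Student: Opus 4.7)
The plan is to construct $w$ by pushing $\omega_-$ along the geodesic flow on $SM_e$. By strict convexity of $\pl M$ in $\hat{M}$, each orbit of $X$ meets $\pl_-SM$ in at most one point. For $y\in SM_e$, let $\tau(y)\in\rr$ denote the unique time (if it exists) such that $\varphi_{\tau(y)}(y)\in\pl_-SM$, and set $w(y):=\omega_-(\varphi_{\tau(y)}(y))$ if $\tau(y)$ exists, and $w(y):=0$ otherwise. By construction $w$ is constant along orbits (so $Xw=0$), satisfies $w|_{\pl_-SM}=\omega_-$, and for $y\in\pl_+SM\setminus\Gamma_+$ the backward orbit reaches $\pl_-SM$ at $S_g^{-1}(y)$, giving $w|_{\pl_+SM\setminus\Gamma_+}=\mc{S}_g\omega_-$. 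Uniqueness is immediate: any solution of $Xw=0$ with $w|_{\pl_-SM}=\omega_-$ must be constant on each orbit, and hence is given by this formula on every orbit reaching $\pl_-SM$.

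For smoothness on $SM_e$: where an orbit meets $\pl_-SM$ transversally, the implicit function theorem shows $\tau$ is smooth, so $w$ is smooth. Regularity could in principle fail either where the orbit is tangent to $\pl M$ (i.e., passes through $\pl_0SM$) or at the interface between orbits hitting $\pl_-SM$ and those that do not. Since $\supp\omega_-$ is compact in the open set $\pl_-SM\setminus\Gamma_-$, it is bounded away from both $\pl_0SM$ and $\Gamma_-\cap\pl_-SM$, so $w$ vanishes identically in a neighborhood of every such locus; together with smoothness on the good open set, this gives $w\in C^\infty(SM_e)$.

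The main technical point, which I expect to be the hardest, is showing that $\supp w$ is compact in $SM\setminus(\Gamma_-\cup\Gamma_+)$. Suppose $y_n\to y_\infty\in\Gamma_-$: then the backward orbit of $y_\infty$ hits $\pl_-SM$ inside $\Gamma_-\cap\pl_-SM$ (either directly, if $y_\infty\in\Gamma_-\setminus K$, or via the local product structure near $K$ from Lemma~\ref{Gamma-W}), and by the convergence of exit points $\varphi_{\tau(y_n)}(y_n)$ to $\Gamma_-\cap\pl_-SM$, which is disjoint from the compact $\supp\omega_-$, we obtain $w(y_n)=0$ for $n$ large. Suppose instead $y_n\to y_\infty\in\Gamma_+$ and $\varphi_{\tau(y_n)}(y_n)\in\supp\omega_-$ along a subsequence: compactness gives $\varphi_{\tau(y_n)}(y_n)\to y_*\in\supp\omega_-$, and $y_*\notin\Gamma_-$ forces $\ell_+(y_*)<\infty$; but $y_n=\varphi_{-\tau(y_n)}(\varphi_{\tau(y_n)}(y_n))\to y_\infty\in\Gamma_+$ requires $-\tau(y_n)\to +\infty$, which would force the forward orbit of $y_*$ to remain in $SM$ for arbitrarily long times, contradicting $\ell_+(y_*)<\infty$. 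Hence $w=0$ near $\Gamma_-\cup\Gamma_+$, so $\supp w$ is compact in $SM\setminus(\Gamma_-\cup\Gamma_+)$.

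Finally, the identity $\mc{E}_+\mc{S}_g=\mc{E}_-$ follows from the analogous uniqueness for $\mc{E}_+$ applied with boundary data $\mc{S}_g\omega_-$: both $\mc{E}_+(\mc{S}_g\omega_-)$ and $\mc{E}_-(\omega_-)$ are smooth flow-invariant functions on $SM_e$ that coincide with $\mc{S}_g\omega_-$ on $\pl_+SM\setminus\Gamma_+$ and vanish on orbits that never meet $\pl SM$, so they agree.
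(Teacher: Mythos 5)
Your proof takes essentially the same approach as the paper: define $w$ by transporting $\omega_\mp$ along the flow (the paper writes $w(y)=\omega_\mp(\varphi_{\ell_\mp(y)}(y))$, which is your $\omega_-(\varphi_{\tau(y)}(y))$), with uniqueness, smoothness, and $\mc{E}_+\mc{S}_g=\mc{E}_-$ all following from constancy on orbits and the transversality furnished by strict convexity. The paper dispatches the compact-support assertion in one sentence, and your attempt to spell it out is reasonable, but the parenthetical appeal to the \emph{local product structure near $K$} for the case $y_\infty\in K\subset\Gamma_-$ is not a correct justification: when $y_\infty\in K$ the backward orbit of $y_\infty$ is trapped and never reaches $\pl_-SM$, so there is no ``exit point of $y_\infty$'' to which the $\varphi_{\tau(y_n)}(y_n)$ could converge by continuity. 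The fix is exactly the semicontinuity argument you already use on the $\Gamma_+$ side: if $\tau(y_n)\to-\infty$ and $z_n:=\varphi_{\tau(y_n)}(y_n)\in\supp\omega_-$ subconverges to $z_\infty$, then $\varphi_t(z_n)\in SM$ for all $t\in[0,-\tau(y_n)]$, and since $SM$ is closed this forces $\varphi_t(z_\infty)\in SM$ for all $t\ge 0$, i.e.\ $z_\infty\in\Gamma_-$, contradicting $\supp\omega_-\cap\Gamma_-=\emptyset$. A shorter route to compactness, closer to the paper's one-liner, is to note that $\ell_+$ is finite and (by the transversality of exits for strictly convex $\pl M$) continuous on the compact set $\supp\omega_-\subset\pl_-SM\setminus\Gamma_-$, hence bounded there by some $T$; then $\supp w$ lies in the compact set $\{\varphi_t(z):z\in\supp\omega_-,\ -\eps\le t\le T+\eps\}$, which is manifestly disjoint from $\Gamma_-\cup\Gamma_+$.
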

\begin{proof}
The function $w=\mc{E}_\mp(\omega_\mp)$ is simply given by 
\begin{equation}\label{expEmp}
w(x,v)=\mc{E}_\mp(\omega)(x,v)=\omega_\mp (\varphi_{\ell_\mp (x,v)}(x,v))
\end{equation}
in $SM$, and is clearly unique in $SM$ since constant on the flow lines. It is smooth in $SM$ 
since $\ell_{\pm}$ is smooth when restricted to $\pl_\pm SM\setminus \Gamma_\pm$, by the strict convexity of 
$\pl SM$. Then $\mc{E}_\mp(\omega_\mp)$ can be extended in $SM_e$ in a way that it is constant on the 
flow lines of $X$, satisfying $X\mc{E}_\mp(\omega_\mp)=0$. The continuity and linearity of $\mc{E}_\pm$ is obvious, and the identity $\mc{E}_+\mc{S}_g=\mc{E}_-$ comes from uniqueness of $w$.
Notice that $\supp(\mc{E}_\mp(\omega_\mp))$ is at positive distance from $\Gamma_-\cup\Gamma_+$ 
since $\omega_\mp$ has support not intersecting $\Gamma_\mp\cap \pl SM$.
\end{proof}
Denoting  $\omega_\pm:=\omega|_{\pl_\pm SM}$ if $\omega\in C_c^\infty(\pl SM\setminus (\Gamma_+\cup\Gamma_-))$, we now define the space 
\begin{equation}\label{CinftyS}
C^\infty_{S_g}(\pl SM):=\{ \omega\in C_c^\infty(\pl SM\setminus (\Gamma_+\cup\Gamma_-)); \,\,
\mc{S}_g\omega_-= \omega_+\}.
\end{equation}
Using the strict convexity and fold theory, Pestov-Uhlmann \cite[Lemma 1.1.]{PeUh} prove\footnote{Their result is for simple manifold, but the proof applies here without any problem since this is just an 
analysis near $\pl_0SM$ where the scattering map has the same behavior as on a simple manifold by the strict convexity of $\pl M$.} 
\begin{equation}\label{smoothext}
\omega \in C^\infty_{S_g}(\pl SM)\iff \exists w\in C_c^\infty(SM\setminus (\Gamma_-\cup \Gamma_+)),\,\,  Xw=0,\,\, w|_{\pl SM}=\omega.
\end{equation}
Similarly to \eqref{CinftyS}, we define the space
\begin{equation} \label{L2S}
L^2_{S_g}(\pl SM):=\{\omega \in L^2(\pl SM; d\mu_\nu);\,\,  \mc{S}_g\omega_-=\omega_+\}.
\end{equation}
We finally show
\begin{lemm}\label{unitary}
The map $\mc{S}_g$ extends as a unitary map 
\[L^2(\pl_-SM,d\mu_\nu) \to L^2(\pl_+SM,d\mu_\nu)\]
where $d\mu_\nu$ is the measure of \eqref{dmunu}.
\end{lemm}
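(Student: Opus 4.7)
The natural strategy is to reduce unitarity to the statement that the scattering map $S_g:\partial_-SM\setminus\Gamma_-\to\partial_+SM\setminus\Gamma_+$ preserves the measure $d\mu_\nu$. Once that is in hand, the isometry property
\[
\|\mathcal{S}_g\omega_-\|_{L^2(\partial_+SM,d\mu_\nu)}^2=\int |\omega_-\circ S_g^{-1}|^2\,d\mu_\nu=\int |\omega_-|^2\,S_g^*d\mu_\nu=\|\omega_-\|_{L^2(\partial_-SM,d\mu_\nu)}^2
\]
follows directly for any $\omega_-\in C_c^\infty(\partial_-SM\setminus\Gamma_-)$.

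\textbf{Proving $S_g^*d\mu_\nu=d\mu_\nu$.} This is the only nontrivial step. I would derive it from Santalo's formula applied twice. Let $\omega_-\in C_c^\infty(\partial_-SM\setminus\Gamma_-)$ and let $w:=\mathcal{E}_-(\omega_-)\in C^\infty(SM_e)$ be its flow-invariant extension given by Lemma \ref{BVPsmooth}, so $w|_{\partial_-SM}=\omega_-$ and $w|_{\partial_+SM}=\mathcal{S}_g\omega_-$. Applying \eqref{santalo} to $|w|^2$ and using $Xw=0$ yields
\[
\int_{SM}|w|^2\,d\mu=\int_{\partial_-SM\setminus\Gamma_-}\ell_+(y)\,|\omega_-(y)|^2\,d\mu_\nu(y).
\]
Applying the same formula to the time-reversed flow $\tilde\varphi_t=\varphi_{-t}$ (whose incoming boundary is $\partial_+SM$, whose Liouville measure is the same, and for which the exit time at $y'\in\partial_+SM\setminus\Gamma_+$ equals the length $\ell(y')$) gives
\[
\int_{SM}|w|^2\,d\mu=\int_{\partial_+SM\setminus\Gamma_+}\ell(y')\,|\mathcal{S}_g\omega_-(y')|^2\,d\mu_\nu(y').
\]
Changing variables $y'=S_g(y)$ in the second identity, and using $\ell(S_g(y))=\ell_+(y)$ and $\mathcal{S}_g\omega_-\circ S_g=\omega_-$, one concludes
\[
\int_{\partial_-SM\setminus\Gamma_-}\ell_+(y)\,|\omega_-(y)|^2\,\bigl(d\mu_\nu - S_g^*d\mu_\nu\bigr)=0.
\]
Since $\ell_+$ is strictly positive on $\partial_-SM\setminus(\Gamma_-\cup\partial_0SM)$ and $d\mu_\nu$ vanishes on $\partial_0SM$, letting $\omega_-$ range over test functions forces $S_g^*d\mu_\nu=d\mu_\nu$ as Borel measures on $\partial_-SM\setminus\Gamma_-$. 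The main (mild) obstacle here is verifying that this ``backward'' Santalo formula is legitimate; this is immediate because $\tilde\varphi$ is simply the geodesic flow of the time-reversed parametrisation, for which all hypotheses of \eqref{santalo} hold verbatim.

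\textbf{Extension and surjectivity.} Under the standing assumption ${\rm Vol}(\Gamma_\pm)=0$, the identity \eqref{VolGammapl} gives ${\rm Vol}_{\partial SM}(\Gamma_\pm\cap\partial SM)=0$, so $C_c^\infty(\partial_-SM\setminus\Gamma_-)$ is dense in $L^2(\partial_-SM,d\mu_\nu)$. By Step 1, $\mathcal{S}_g$ extends to a linear isometry from $L^2(\partial_-SM,d\mu_\nu)$ into $L^2(\partial_+SM,d\mu_\nu)$. Applying the same construction to the inverse scattering map (which is the scattering map of the reversed flow and satisfies the same hypotheses) shows that $\mathcal{S}_g^{-1}$ extends to an isometry in the opposite direction; composing the two extensions yields the identity on dense subsets, hence everywhere. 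Therefore $\mathcal{S}_g$ is surjective and unitary.
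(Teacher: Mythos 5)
Your proof is correct, but it takes a noticeably more circuitous route than the paper's. The paper applies the divergence theorem (Green's formula) directly to the identity $X(w_1\bbar{w_2})=0$, where $w_1,w_2$ are the flow-invariant extensions of $\omega_-^1,\omega_-^2$: integrating over $SM$ immediately produces the polarized isometry identity $\int_{\pl_-SM}\omega_-^1\bbar{\omega_-^2}\,d\mu_\nu=\int_{\pl_+SM}\mc{S}_g\omega_-^1\bbar{\mc{S}_g\omega_-^2}\,d\mu_\nu$, and density plus the symmetric role of $\pl_\pm SM$ gives unitarity. You instead apply Santalo's formula (itself a repackaging of the divergence theorem) to $|w|^2$ twice, once for $\varphi_t$ and once for the time-reversed flow, producing two expressions for $\int_{SM}|w|^2\,d\mu$ that each carry a factor $\ell_+(y)$ (resp. $\ell(y')$); you then change variables by $S_g$ and cancel those factors to extract $S_g^*d\mu_\nu=d\mu_\nu$, from which the isometry follows. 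Both arguments rest on the same two facts — flow invariance of the Liouville measure and of $w$ — but the paper's route is a one-step computation, whereas yours first proves the stronger (and classical) measure-preservation statement as an intermediate and must manage the unnecessary weights $\ell_+$ along the way. There is a slight technical caveat worth keeping in mind: Santalo's formula, as cited in the text, is stated under the hypothesis ${\rm Vol}(\Gamma_-\cup\Gamma_+)=0$, which Lemma \ref{unitary} does not assume. Your argument is nonetheless sound because $|w|^2$ is supported away from $\Gamma_-\cup\Gamma_+$, so the change of variables underlying Santalo applies regardless of the measure of the trapped/tail set; it would be cleaner, however, to make this observation explicit rather than invoke \eqref{santalo} at face value. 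The paper's Green's-formula proof sidesteps this issue entirely.
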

\begin{proof}
Consider $w_-^1,w_-^2\in C_c^\infty(\pl_-SM\setminus \Gamma_-)$ and $w_1,w_2$ their invariant extension as in \eqref{invariantu}. Then we have 
\[ \begin{split}
0=&\int_{SM}Xw_1.\bbar{w_2}+w_1.X\bbar{w_2}\, d\mu= \int_{SM}X(w_1.\bbar{w_2})d\mu\\
  =& -\int_{\pl_-SM} \omega_-^1.\bbar{\omega_-^2}\,|\cjg X,N\cjd_S| \; d\mu_{\pl SM}+
         \int_{\pl_+SM} \mc{S}_g\omega_-^1.\,\bbar{\mc{S}_g\omega_-^2}\, |\cjg X,N\cjd_S|\, d\mu_{\pl SM}
\end{split}
\]
where $\cjg \cdot,\cdot\cjd_S$ is Sasaki metric and $N$ is the unit inward pointing normal vector field to $\pl SM$ for $S$. But $N$ is the horizontal lift of  $\nu$, and so 
$\cjg X,N\cjd_S=\cjg v,\nu\cjd_g$. This shows that $\mc{S}_g$ extends as an isometry by a density argument and reversing the role of $\pl_-SM$ with $\pl_+SM$ we see that $\mc{S}_g$ is invertible.
\end{proof}

\section{Resolvent and boundary value problem}

\subsection{Sobolev spaces and microlocal material} 
For a closed manifold $Y$, 
the $L^2$-based Sobolev space of order $s\in \rr$ is denoted $H^s(Y)$. If $Z$ is a manifold with a smooth boundary, it can be extended smoothly across its boundary as a subset of a closed manifold $Y$ of the same dimension; we denote by $H^s(Z)$ 
for $s\geq 0$ the $L^2$ functions on $Z$ which admit an $H^s$ extension to $Y$. The space $H_0^s(Z)$ is the closure of $C_c^\infty(Z^\circ)$ for the $H^s$ norm on $Y$ and we denote by $H^{-s}(Y)$ the dual of $H_0^s(Y)$.
We refer to Taylor \cite[Chap. 3-5]{Ta} for details and precise definitions. If $Z$ is an open manifold or a manifold with boundary, we 
set $C^{-\infty}(Z)$ to be the set of distributions, defined as the dual of $C_c^\infty(Z^\circ)$. For $\alpha\geq 0$, the Banach space $C^{\alpha}(Z)$ is the space of $\alpha$-H\"older functions. 
We will use the notion of wavefront set of a distribution (see \cite[Chap. 8]{Ho}), the calculus of pseudo-differential operators ($\Psi$DO in short), we refer 
the reader to Grigis-Sj\"ostrand \cite{GrSj} and Zworski \cite{Zw} for a thorough study. In particular, 
we shall say that a pseudo-differential operator $A$ on an open manifold $Z$ with dimension $n$
has support in $U\subset Z$ if its Schwartz kernel has support in $U\x U$.
The microsupport ${\rm WF}(A)$ (or wavefront set) of $A$ is defined as the complement to the 
set of points $(y_0,\xi_0)\in T^*Z$ such that there is a small neighborhood $U_{y_0}$ of $y_0$ 
and a cutoff function $\chi\in C_c^\infty(U_{y_0})$ equal to $1$ near $y_0$ such that $A_\chi:=\chi A\chi$ can be written under the form ($U_{y_0}$ is identified to an open set of $\rr^n$ using a chart)
\[ A_\chi f(y)=\int_{U_{y_0}}\int_{\rr^n}e^{i(y-y')\xi}\sigma(y,\xi)f(y')d\xi dy'\]
for some smooth symbol $\sigma$ satisfying $|\pl_{y}^\alpha \pl_\xi^\beta \sigma(y,\xi)|\leq C_{\alpha,\beta,N}\cjg \xi\cjd^{-N}$ for all $N>0$, $\alpha,\beta\in \nn^n$.

\subsection{Resolvent}\label{subsec:resolvent}
We first define the resolvent of the flow in the physical spectral region. 
\begin{lemm}\label{resolvphys} 
For ${\rm \la}>0$, the resolvents $R_\pm(\la): L^2(SM_e)\to L^2(SM_e)$
defined by the following formula 
\begin{equation}\label{R_+R_-}
\begin{gathered} 
R_+(\la)f(y)=\int_0^\infty e^{-\la t}f(\varphi_t(y))dt, \quad 
R_-(\la)f(y)=-\int_{-\infty}^0 e^{\la t}f(\varphi_t(y))dt
\end{gathered}
\end{equation}
are bounded.  They satisfy in the distribution sense in $SM_e^\circ$
\begin{equation}\label{inverse} 
\begin{gathered}
\forall f\in L^2(SM_e), \,\, (-X\pm \la)R_\pm(\la)f=f, \\ 
\forall f\in H_0^1(SM_e),\,\, R_\pm(\la)(-X\pm\la)f =f, 
\end{gathered}
\end{equation}
and we have the adjointness property
\begin{equation}\label{adjoint}
R_-(\bbar{\la})^*= - R_+(\la)  \textrm{ on }L^2(SM_e),
\end{equation} 
The expression \eqref{R_+R_-} gives an analytic continuation 
of $R_\pm(\la)$ to $\la\in\cc$ as operator
\begin{equation}\label{extensionla}
R_\pm(\la): C_c^\infty(SM_e^\circ \setminus \Gamma_\mp)\to C^\infty(SM_e)
\end{equation}
satisfying $(-X\pm\la)R_\pm(\la)f=f$ in $SM_e$, 
and an analytic continuation of $R_\pm(\la)\chi_\pm$ and  $\chi_\pm R_\pm(\la)$ as operators
\begin{equation}\label{Rchi}
\begin{gathered}
R_\pm(\la)\chi_\pm: L^2(SM_e)\to L^2(SM_e), \quad \chi_\pm R_\pm(\la): L^2(SM_e)\to L^2(SM_e)
\end{gathered}
\end{equation}
if $\chi_\pm \in C^\infty(SM_e)$ is supported in $SM_e\setminus \Gamma_\mp$. 
%For all $f\in C_c^0(SM^\circ _e)$ with $\supp(f)\subset SM$ and all $f'\in L^2(SM_e)$ with $\supp(f')\subset SM^\circ$,  there exists an open neighborhood $U_\pm$ of $\pl_\pm SM$ such that for $\la\in \cc$ 
%\begin{equation}\label{boundaryvalue} 
%\iota_\pm^*R_\pm(\la)f=0, \quad R_\pm(\la)f'=0 \textrm{ in }U_\pm.
%\end{equation}
%if $\iota_\pm: \pl_\pm SM\to SM_e$ is the inclusion map.
\end{lemm}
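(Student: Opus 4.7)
The plan is to extract each assertion directly from the integral formula \eqref{R_+R_-} via two geometric inputs: (a) since $X_0=X$ on $SM_e$, the flow $\varphi_t$ preserves the Liouville measure $d\mu$ along any portion of trajectory lying in $SM_e$; (b) strict convexity of $\pl M_e$ in $\hat M$ implies that a trajectory cannot re-enter $SM_e$ after exiting, and that the forward/backward exit-time functions $\ell_\pm^e$ of $SM_e$ are continuous (in fact smooth) on $SM_e\setminus\Gamma_\mp$, hence uniformly bounded on every compact subset of that set.

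For the $L^2$-bound when $\la>0$, I would apply Minkowski's inequality in integral form to \eqref{R_+R_-} together with the substitution $z=\varphi_t(y)$, which by (a) has unit Jacobian on segments inside $SM_e$; extending $f$ by zero off $SM_e$ annihilates the remainder and yields $\|R_\pm(\la)\|_{L^2\to L^2}\leq 1/\la$. Identity \eqref{inverse} is obtained, for smooth $f$ supported in $SM_e^\circ$, by differentiating at $s=0$ the semigroup relation $R_+(\la)f(\varphi_s y)=e^{\la s}\bigl(R_+(\la)f(y)-\int_0^s e^{-\la t}f(\varphi_t y)dt\bigr)$, giving $(-X+\la)R_+(\la)f=f$; the converse $R_+(\la)(-X+\la)f=f$ comes from an integration by parts in $t$, where the boundary term at $+\infty$ is killed by the $H^1_0$-hypothesis (vanishing of $f$ at $\pl SM_e$). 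The adjoint relation \eqref{adjoint} is a Fubini computation with the same measure-preserving substitution.

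The analytic continuation \eqref{extensionla} is the technical crux. For $f\in C_c^\infty(SM_e^\circ\setminus\Gamma_-)$, input (b) provides $T=T(\supp f)<\infty$ with $|\ell_-^e|\leq T$ on $\supp f$; combined with non-re-entry this forces $t\mapsto f(\varphi_t(y))$ to be supported in a $t$-interval of length $\leq T$ independently of $y\in SM_e$. The integral defining $R_+(\la)f(y)$ is therefore entire in $\la$ and depends smoothly on $y$ for $y\notin\Gamma_-$; smoothness must be relinquished on $\Gamma_-$ because $\ell_+^e(y)\to +\infty$ as $y\to\Gamma_-$, so the effective integration window escapes to infinity. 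The extension \eqref{Rchi} uses the same mechanism: for $R_+(\la)\chi_+$ with $\chi_+$ supported in $SM_e\setminus\Gamma_-$, the uniform bound on $\ell_-^e|_{\supp\chi_+}$ localises $t$ to $[0,T]$; for $\chi_+R_+(\la)$, the point $y\in\supp\chi_+$ lies off $\Gamma_-$, so by (b), $\ell_+^e(y)$ is uniformly bounded on $\supp\chi_+$ and $f(\varphi_t y)$ vanishes past this bound (after extending $f$ by zero). In each case, Cauchy-Schwarz plus (a) give an $L^2\to L^2$ bound locally uniformly in $\la$, and analyticity follows from the entire dependence of the integrand in $\la$.

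The only real subtlety is establishing (b): smoothness of $\ell_\pm^e$ on $SM_e\setminus\Gamma_\mp$ and the non-re-entry property. Both are immediate consequences of the strict convexity of $\pl M_e$ inside $\hat M$, since the orbit hits $\pl SM_e$ transversally (so the implicit function theorem yields smoothness of the exit time) and convexity forbids re-entry. Once (b) is at hand, the rest of the lemma reduces to routine Fubini and Cauchy-Schwarz manipulations of the defining integrals \eqref{R_+R_-}.
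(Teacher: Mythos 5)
Your overall strategy (read everything off the defining integral \eqref{R_+R_-}, using Liouville-invariance on portions of orbits in $SM_e$ and strict convexity for non-re-entry and smooth exit times) matches the paper's, and the $L^2$-bound, the resolvent identity \eqref{inverse} via the semigroup relation and integration by parts, and the Fubini argument for \eqref{adjoint} are all fine. The gap is in the analytic-continuation paragraph, and it comes from swapping the two exit-time functions.

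You correctly record in your input (b) that $\ell_+^e$ is smooth on $SM_e\setminus\Gamma_-$ and $\ell_-^e$ is smooth on $SM_e\setminus\Gamma_+$. But for \eqref{extensionla} you then write that, for $f\in C_c^\infty(SM_e^\circ\setminus\Gamma_-)$, one has a uniform bound $|\ell_-^e|\leq T$ on $\supp f$. That is the wrong exit time: $\Gamma_-$ is precisely $\{\ell_+^e=+\infty\}$, so staying away from $\Gamma_-$ bounds $\ell_+^e$ on $\supp f$, not $\ell_-^e$. The support of $f$ is allowed to meet $\Gamma_+\setminus K$, where $|\ell_-^e|=\infty$. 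From this false bound you infer that the $t$-window $\{t\geq 0:\varphi_t(y)\in\supp f\}$ is contained in $[0,T]$ uniformly over all $y\in SM_e$. That is false: if $z\in\Gamma_+\cap\supp f$ and $y_t:=\varphi_{-t}(z)\in SM_e$ (which holds for all $t\geq 0$ since $z\in\Gamma_+$), then $\varphi_t(y_t)=z\in\supp f$ for arbitrarily large $t$, while $y_t\to K\subset\Gamma_-$. Worse, this claimed uniformity directly contradicts your very next sentence, where you correctly note that smoothness of $R_+(\la)f$ must be given up on $\Gamma_-$ because the integration window escapes to infinity; a window of uniformly bounded location and length cannot escape. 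The same slip reappears in your proof of \eqref{Rchi}, where you again invoke a bound on $\ell_-^e|_{\supp\chi_+}$ for $\chi_+$ supported away from $\Gamma_-$.

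The fix is to work with $\ell_+^e$ and to localize in $y$ rather than asking for global uniformity. The target space in \eqref{extensionla} is $C^\infty(SM_e\setminus\Gamma_-)$, whose topology is convergence on compact subsets. For $Q\Subset SM_e\setminus\Gamma_-$, the bound $\ell_+^e|_Q\leq T(Q)<\infty$ plus the vanishing of $f$ outside $SM_e$ give $f(\varphi_t(y))=0$ for $t>\ell_+^e(y)\leq T(Q)$, so $R_+(\la)f(y)=\int_0^{T(Q)}e^{-\la t}f(\varphi_t(y))dt$ for $y\in Q$, which is entire in $\la$ and smooth in $y\in Q$. As $y\to\Gamma_-$ this $T(Q)$ blows up, which is exactly why one only gets $C^\infty(SM_e\setminus\Gamma_-)$. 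Your argument for the composition $\chi_+R_+(\la)$, which correctly uses the bound on $\ell_+^e|_{\supp\chi_+}$, is the model to follow.
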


\begin{proof} The proof of \eqref{inverse} is straightforward. The boundedness on $L^2$  
follows from the inequality (using Cauchy-Schwarz)
\[ \int_{SM_e}\Big|\int_{0}^{\pm\infty} e^{-\la |t|}f(\varphi_t(x,v))dt\Big|^2d\mu\leq
C_\la\int_{SM_e}\int_{0}^{\pm\infty} e^{-{\rm Re}(\la) |t|}|f(\varphi_t(x,v))|^2dt d\mu, \]
for some $C_\la>0$ depending on ${\rm Re}(\la)$, and a change of variable $y=\varphi_t(x,v)$ with the fact that the flow $\varphi_t$ preserves the measure $d\mu$ in $SM_e$ gives the result.
The adjoint property \eqref{adjoint} is also a consequence of the invariance of $d\mu$ by the flow in  $SM_e$. 
The identity $(-X\pm \la)R_\pm(\la)f=f$ holds for any $f\in C_c^\infty(SM_e^\circ)$, thus for $f\in L^2(SM_e)$ 
and any $\psi\in C_c^\infty(SM_e^\circ)$ ($\cjg\cdot,\cdot\cjd$ is the distribution pairing)
\[\cjg (-X\pm \la)R_\pm(\la)f,\psi\cjd =\cjg R_\pm(\la)f,(X\pm\la)\psi\cjd=
\lim_{n\to \infty} \cjg R_\pm(\la)f_n,(X\pm\la)\psi\cjd=\lim_{n\to \infty}\cjg f_n,\psi\cjd
\]
if $f_n\to f$ in $L^2$ with $f_n\in C_c^\infty(SM_e)$, thus $(-X\pm\la)R_\pm(\la)f=f$ in $C^{-\infty}(SM_e^\circ)$.
The other identity in \eqref{inverse} is proved similarly.
The analytic continuation of $R_\pm(\la)$ in \eqref{extensionla} is direct to check by using
that the integrals in \eqref{R_+R_-} defining $R_\pm(\la)f$ are integrals on 
a compact set $t\in [-T,T]$ with $T$ depending  on the distance of 
support of $f$ to $\Gamma_\mp$. Similarly, the extension of
$R_\pm (\la)\chi_\pm f$ and $\chi_\pm R_\pm(\la)f$ for $f\in L^2(SM_e)$ comes from the fact that the 
support of $t\mapsto (\chi_\pm f)(\varphi_t(x,v))$ and of $t \mapsto \chi_\pm(x,v) f(\varphi_t(x,v))$ intersect $\rr^\pm$ in a compact set which is uniform with respect to $(x,v)\in SM_e$. 
\end{proof}

We next show that the resolvent at the parameter $\la=0$ can be defined if the non-escaping mass function 
$V(t)$ in \eqref{nonescape} is decaying enough as $t\to \infty$.
Let us first define the maximal Lyapunov exponent of the flow near $\Gamma_-\cup \Gamma_+$:
\begin{equation}\label{nu}
 \nu_{\max}=\max(\nu_+,\nu_-), \quad \textrm{ if } \nu_\pm:=\limsup_{t\to+ \infty}{1\over t}\log \sup_{(x,v)\in \mathcal T_\pm(t)}
\|d\varphi_{\pm t}(x,v)\|.
\end{equation}
where $\mc{T}_\pm$ is defined in \eqref{nonescape}.

\begin{prop}\label{boundedL2} 
Let $\alpha\in (0,1)$, let $Q$ be a negative real number and let $\nu_{\max}$ be the maximal Lyapunov exponent defined in \eqref{nu}.\\ 
1) The family of operators $R_\pm(\la)$ of Lemma \ref{resolvphys} extends as a continuous 
family in ${\rm Re}(\la)\geq 0$ of operators bounded on the spaces
\begin{equation}\label{boundLp1}
R_\pm(\la): L^\infty(SM_e)\to L^p(SM_e),  \,\, \textrm{ if }\int_{1}^\infty V(t)t^{p-1}dt<\infty \,\,\textrm{ with }  p\in [1,\infty),
\end{equation}
\begin{equation}\label{boundLp2}
R_\pm(\la):L^p(SM_e)\to L^1(SM_e), \,\, \textrm{ if }\int_{1}^\infty V(t)t^{\frac{1}{p-1}}dt<\infty \,\, \textrm{ with }   p\in (1,\infty),
\end{equation}
\begin{equation}\label{boundHs}
R_\pm(\la): C_c^\alpha(SM_e^\circ) \to H^s(SM_e), \,\, \textrm{ if } \,\,V(t)=\mc{O}(e^{Qt}) \textrm{ with } s<
\min\Big(\alpha,\frac{-Q}{2\nu_{\max}}\Big)
\end{equation} 
where $V(t)$ is the function of \eqref{nonescape}.
This operator satisfies $(-X\pm\la)R_\pm(\la)f=f$ in the distribution sense in $SM_e^\circ$ when $f$ is in one of the spaces where $R_\pm(\la)f$ is well-defined. \\
2) If $\iota: \pl SM\to SM_e$ is the inclusion map, then the 
operator $\iota^*R_\pm(\la)$ is a bounded operator on the spaces
\begin{equation}\label{restbound} 
L^\infty(SM_e)\to L^p(\pl SM), \quad L^p(SM_e)\to L^1(\pl SM), \quad 
C_c^\alpha(SM_e^\circ) \to H^s(\pl SM)\end{equation}
under the respective conditions \eqref{boundLp1}, \eqref{boundLp2} and \eqref{boundHs} on $V$, $p$ and $s$; the measure used on $\pl SM$ is $d\mu_\nu$, defined in \eqref{dmunu}.\\ 
3) If the condition \eqref{boundLp2} is satisfied and $f\in L^p(SM_e)$ has $\supp(f)\subset SM^\circ$, then 
$R_\pm(\la)f=0$ in a neighborhood of $\pl_\pm SM\cup \pl_0SM$ in $SM_e$.
\end{prop}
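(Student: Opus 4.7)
I would work directly from the explicit representations
\[
R_+(\la)f(y)=\int_0^\infty e^{-\la t}f(\varphi_t(y))\,dt,\qquad R_-(\la)f(y)=-\int_{-\infty}^0 e^{\la t}f(\varphi_t(y))\,dt,
\]
and reduce everything to the case ${\rm Re}\,\la=0$, since $|e^{-\la t}|\leq 1$ for $\pm t\geq 0$ when $\pm{\rm Re}\,\la\geq 0$. The key quantity controlling all bounds is the escape time $\ell_+^e$ from $SM_e$, which differs from $\ell_+$ only by a uniformly bounded transit time through the collar $SM_e\setminus SM$; consequently $\mu(\{\ell_+^e>t\})\lesssim V(t-t_0)$ for $t\geq t_0$, and every integrability condition on $V$ transfers to $\ell_+^e$.

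\textbf{Parts \eqref{boundLp1}, \eqref{boundLp2}, trace bounds, and support claim.} For \eqref{boundLp1}, combine the pointwise bound $|R_+(\la)f(y)|\leq \|f\|_{L^\infty}\ell_+^e(y)$ with the layer-cake identity
\[
\|\ell_+^e\|_{L^p(SM_e)}^p=p\int_0^\infty t^{p-1}\mu(\{\ell_+^e>t\})\,dt\lesssim \int_1^\infty t^{p-1}V(t)\,dt.
\]
The bound \eqref{boundLp2} then follows from the adjointness \eqref{adjoint}: testing $\|R_+(\la)f\|_{L^1}$ against $g\in L^\infty$ and applying \eqref{boundLp1} to $R_-(\bbar\la)$ at the conjugate exponent $p'=p/(p-1)$ produces the condition $\int V(t)t^{1/(p-1)}dt<\infty$. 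The trace bounds \eqref{restbound} follow from the same pointwise estimates applied on $\pl_-SM$: Santalo's formula \eqref{santalo} yields the identity $V(t)=\int_{\pl_-SM}(\ell_+-t)_+\,d\mu_\nu$, which implies $\mu_\nu(\{\ell_+\geq 2t\})\leq V(t)/t$ and hence $\int_{\pl_-SM}\ell_+^p\,d\mu_\nu<\infty$ under a hypothesis strictly weaker than the one already assumed for the bulk bound. The support claim (3) is immediate: for $y$ in a neighborhood of $\pl_+SM\cup\pl_0SM$ inside $SM_e$, strict convexity of $\pl SM$ together with the transversality properties of $X$ recorded in \eqref{intersectGamma} force the forward trajectory $\{\varphi_t(y)\}_{t\geq 0}$ to avoid $\supp(f)\subset SM^\circ$ entirely, so the defining integral vanishes identically.

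\textbf{Part \eqref{boundHs} (the main obstacle).} For the H\"older-to-Sobolev estimate I would dyadically decompose $R_+(\la)f=I_{T_0}(f)+\sum_{k\geq 0}J_k(f)$ with $J_k(f)(y):=\int_{T_k}^{T_{k+1}}e^{-\la t}f(\varphi_t(y))\,dt$ and $T_k:=T_0+k$. On $\mc{T}_+(t)$ the H\"older norm of $\varphi_t$ grows at most like $e^{\nu_{\max} t}$ by the definition \eqref{nu} of $\nu_{\max}$, so $\|J_k\|_{C^\alpha}\lesssim e^{\alpha\nu_{\max} T_{k+1}}\|f\|_{C^\alpha}$; on the other hand $\supp(J_k)\subset \mc{T}_+(T_k)$ (up to the collar correction) yields $\|J_k\|_{L^2}\lesssim V(T_k)^{1/2}\|f\|_{\infty}$. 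Interpolating via $C^\alpha\hookrightarrow H^\alpha$ for compactly supported functions and summing in $k$ under $V(t)=\mc{O}(e^{Qt})$ gives convergence of the series in $H^s$ for $s$ in some nontrivial range. This is the main obstacle: naive interpolation does not reach the sharp threshold $s<-Q/(2\nu_{\max})$, because the cutoff at time $\ell_+^e(y)$ is only H\"older continuous across $\Gamma_-$. To recover the optimal range one should exploit the microlocal framework of \cite{DyGu2}, which yields the wavefront localization $\WF(R_\pm(\la)f)\subset E_\pm^*$ on $\Gamma_\pm$ and the bound \eqref{estimE^*} on the co-differential, so that the transverse regularity loss is controlled exclusively by the hyperbolic contraction rate on stable/unstable conormals and one recovers the sharp gain $-Q/(2\nu_{\max})$ in the Sobolev scale.
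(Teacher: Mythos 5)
Your treatment of \eqref{boundLp1}, \eqref{boundLp2}, the $L^\infty\to L^p$ and $L^p\to L^1$ trace bounds in \eqref{restbound}, and part 3) is essentially correct and follows the same route as the paper: pointwise bound by $\ell_+^e$, layer-cake on $\mu(\{\ell_+^e>t\})\lesssim V(t-L)$, Santalo to transfer to $\partial_-SM$, and the flowout/strict-convexity observation for the vanishing near $\partial_+SM\cup\partial_0SM$. Your identity $V(t)=\int_{\partial_-SM}(\ell_+-t)_+\,d\mu_\nu$ and the resulting distribution-function bound are a sharper variant of what the paper records, and the duality argument for \eqref{boundLp2} via $R_-(\bar\lambda)^*=-R_+(\lambda)$ is a legitimate alternative to the paper's direct change-of-variables estimate (you need the additional observation that $R_+(\lambda)f$ is an a.e.\ defined measurable function, so that a uniform bound against $L^\infty$ test functions actually yields $L^1$ membership).

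The genuine gap is \eqref{boundHs}. The paper proves it by an entirely elementary argument using the Gagliardo--Slobodeckij double-integral characterization
\[
\|u_+\|_{H^s}^2\sim \|u_+\|_{L^2}^2+\int\int\frac{|u_+(y)-u_+(y')|^2}{d(y,y')^{n+2s}}\,dy\,dy',
\]
the pointwise estimate $|f(\varphi_t(y))-f(\varphi_t(y'))|\leq C\|f\|_{C^\beta}e^{\nu\beta|t|}d(y,y')^\beta$ for $\nu>\nu_{\max}$ (which holds since the integrand is supported in $t\leq \ell_+^e(y,y')$), and Cavalieri with $V(t)=\mc{O}(e^{Qt})$; this directly yields convergence for $s<\beta<-Q/(2\nu)$ with $\beta\leq\alpha$, and passing to the limit gives exactly the threshold $s<\min(\alpha,-Q/(2\nu_{\max}))$ --- no microlocal input is needed. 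Your dyadic decomposition, in fact, would also reach this threshold if you estimated the $H^s$ norm of each $J_k$ by combining its $C^\beta$ bound $\sim e^{\beta\nu T_k}$ with the measure $\sim V(T_k)$ of its support (near-diagonal vs.\ far split in the seminorm, optimizing the scale $\delta\sim e^{-\nu T_k}$), so your stated obstacle is not actually there. More importantly, the fallback you sketch --- invoking the wavefront localization from \cite{DyGu2} --- is not a proof of the quantitative $H^s$ bound: Proposition \ref{DyGu} gives mapping properties on anisotropic spaces $H_0^s\to H^{-s}$ and wavefront containment (and in fact $\WF(R_\pm(0)f)\subset E_\mp^*$, not $E_\pm^*$ as you wrote), which by itself does not produce the threshold $-Q/(2\nu_{\max})$; that constant comes from the interplay between the escape rate and the Lyapunov exponent, precisely what the elementary Slobodeckij computation captures. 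You also do not address the $C_c^\alpha\to H^s(\partial SM)$ part of \eqref{restbound}, which in the paper follows by running the same seminorm estimate restricted to $\partial SM$; and the assertion of continuity of $\lambda\mapsto R_\pm(\lambda)$ up to $\mathrm{Re}\,\lambda=0$, which the paper gets from dominated convergence, is omitted.
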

\begin{proof}
Let us denote $u_+(\la)=R_+(\la)f$ the $L^2$ function given by \eqref{R_+R_-} for ${\rm Re}(\la)>0$, 
when $f\in L^\infty(SM_e)$. 
When $\int_1^\infty V(t)t^{p-1}dt<\infty$, the measure of $\Gamma_+\cup \Gamma_-$ is $0$ and thus for $f\in L^\infty(SM_e)$ and $\la_0\in i\rr$, the function $u_+(\la_0; x,v):=\int_{0}^\infty e^{-\la_0t}f(\varphi_t(x,v))dt$ is 
finite outside a set of measure $0$ since $\ell_+^e(x,v)$, defined as the length of the geodesic 
$\{\varphi_t(x,v); t\geq 0\}\cap SM_e$, is finite on $SM_e\setminus \Gamma_-$.
If $\la_n$ is any sequence with ${\rm Re}(\la_n)>0$ converging to $\la_0$, we have $u_+(\la_n)\to u_+(\la_0)$ almost everywhere in $SM_e$ (using Lebesgue theorem). Moreover $|u_+(\la_n)|\leq \int_{0}^\infty |f\circ\varphi_t|dt$ almost everywhere 
in $SM_e$ for all $n>1$, and using Lebesgue theorem, we just need to prove
that $||R_+(0)(|f|) ||_{L^p}\leq C||f||_{L^\infty}$ to get that $||u_+(\la_0)||_{L^p}\leq C||f||_{L^\infty}$ and 
$u_+(\la_n)\to u_+(\la_0)$ in $L^p$.
We have for almost every $(x,v)$
\begin{equation}\label{u+exp} 
|u_+(0;x,v)|=\Big|\int_{0}^{\infty} f(\varphi_t(x,v))dt \Big| \leq ||f||_{L^\infty}\, \ell_+^e(x,v).
\end{equation}
Notice that, in view of our assumption on the metric in $SM_e\setminus SM$ we have 
$\ell_+(x,v)+L\geq \ell_+^e(x,v)\geq \ell_+(x,v)$ for some $L>0$ uniform in $(x,v)\in SM\setminus \Gamma_-$.
Using the definition of $V(t)$ in \eqref{nonescape},  the volume of the set $S_T$ of points $(x,v)\in SM_e$ such that 
$\ell_+^e(x,v)>T$ is smaller or equal to $2V(T-L)$ with $L$ as above (independent of $T$). We apply
Cavalieri principle for the function $\ell_+^e(x,v)$ in $SM_e\setminus \Gamma_-$, this gives 
\begin{equation}\label{boundu+} 
\int_{SM_e\setminus \Gamma_-} \ell^e_+(x,v)^p d\mu \leq 
C\Big(1+\int_{1}^{\infty}t^{p-1}V(t)dt\Big)
\end{equation}
which shows \eqref{boundLp1} using \eqref{u+exp}. Notice that the same argument gives the same bound
for the $L^p$ norms of $\ell_-^e$ in $SM_e\setminus \Gamma_+$.
The boundedness $L^p\to L^1$ of \eqref{boundLp2} is a direct consequence of 
\eqref{boundu+} (with $\ell_-$ instead of $\ell_+$) and the inequality 
\[ \int_{SM_e}\int_0^\infty |f(\varphi_t(x,v))|dtd\mu\leq \int_{SM_e}\int_0^\infty \indic_{SM_e}(\varphi_{-t}(x,v)) 
|f(x,v)|dtd\mu\leq ||\ell^e_-||_{L^{p'}} ||f||_{L^p}\]
for all $f\in C_c^\infty(SM_e^\circ)$ if $1/p'+1/p=1$.
The fact that $\iota^*R_\pm(\la)f$ defines a measurable function in $L^1(\pl SM,d\mu_\nu)$ when $f\in L^1(SM_e)$ comes directly from Santalo formula \eqref{santalo2} and Fubini theorem 
(note that $\pl_0SM$ has zero measure in $\pl SM$). 
This shows the boundedness property of $\iota^*R_\pm(\la): L^p(SM_e)\to L^1(\pl SM,d\mu_\nu)$.
Let us now prove the boundedness of the restriction $\iota^*R_\pm(0)f$ in $L^p$ when $f\in L^\infty$. 
Since $\ell^e_+(\varphi_t(x,v))=(\ell_+^e(x,v)-t)_+$ for $t>0$, Santalo formula gives  
\[\begin{gathered} 
\int_{\pl_- SM_e\setminus \Gamma_-} \int_0^{\ell_+^e(x,v)}\indic_{[T,\infty)}(\ell^e_+(x,v)-t)dt |\cjg v,\nu\cjd| d\mu_{\pl SM_e}={\rm Vol}(S_T) ,\\
  \int_{\pl_- SM\setminus \Gamma_-} \int_0^{\ell_+(x,v)}\indic_{[T,\infty)}(\ell_+(x,v)-t)dt  d\mu_\nu \leq {\rm Vol}(S_T) 
\end{gathered}\]
for $T$ large. From this, we get for large $T$ 
\begin{equation}\label{fonctionrepart}
\begin{gathered} 
%\int_{\pl_- SM_e\setminus \Gamma-} \indic_{[T,\infty)}(\ell^e_+(x,v))|\cjg v,\nu\cjd| d\mu_{\pl SM_e}\leq V(T+1),\\
\int_{\pl_- SM\setminus \Gamma_-} \indic_{[T,\infty)}(\ell_+(x,v)) d\mu_\nu \leq 2V(T-L-1),
\end{gathered}\end{equation}
and using Cavalieri principle, for any $\infty> p\geq 1$ there exists $C_p>0$ so that 
\begin{equation}\label{ell_+L^p}
\begin{gathered}
%\int_{\pl_- SM_e\setminus \Gamma_-} \ell_+^e(x,v)^p |\cjg v,\nu\cjd| d\mu_{\pl SM_e}\leq C_p(1+\int_{1}^{\infty}
%\la^{p-1}e^{(Q+\eps)\la}d\la)<\infty,\\
\int_{\pl_- SM\setminus \Gamma_-} \ell_+(x,v)^p d\mu_\nu \leq C\Big(1+\int_{1}^{\infty}
t^{p-1}V(t)dt\Big),
\end{gathered}\end{equation}
which shows, from \eqref{u+exp} that $u_+|_{\pl SM\setminus \Gamma_-}\in L^p$ for any $1\leq p<\infty$ with a bound $\mc{O}(||f||_{L^\infty})$.

To prove that $(-X\pm\la)R_\pm(\la)f=f$ in $C^{-\infty}(SM_e^\circ)$ when $f\in L^p$ for $p\in (1,\infty)$
and the condition $\int_{0}^\infty V(t)t^{p-1}dt<\infty$ is satisfied, we take 
$\psi\in C_c^\infty(SM^\circ_e\setminus \Gamma_\mp)$ and write 
\[ \cjg R_\pm(\la)f,(X\pm \la)\psi\cjd = \lim_{n\to \infty} \cjg R_\pm(\la)f_n,(X\pm \la)\psi\cjd 
=\lim_{n\to \infty}\cjg f_n,\psi\cjd=\cjg f,\psi\cjd \]  
where $f_n\in C_c^\infty(SM_e^\circ \setminus \Gamma_\mp)$ converges in $L^p$ to $f$; to obtain the second identity, we used \eqref{extensionla} and the fact that $(-X\pm \la)R_\pm (\la)f_n=f_n$ in $SM_e^\circ \setminus \Gamma_\mp$. 

Finally, we describe the case where the escape rate $Q$ is negative (ie. when $V(t)$ decays exponentially fast).
We need to prove that $u_+$ is in $H^s(SM_e)$ for some $s>0$ if $f\in C_c^\alpha(SM_e^\circ)$. 
To prove that $u_+$ is $H^s(SM_e)$, it suffices to prove (\cite[Chap. 7.9]{Ho}) 
\[ \int_{SM_e}\int_{SM_e}\frac{|u_+(y)-u_+(y')|^2}{d(y,y')^{n+2s}}dydy'<\infty\]
if $n=\dim(SM)$ and $d(y,y')$ denote the distance for the Sasaki metric on $SM_e$. Using that 
$f\in C^\alpha(SM_e)$, we have  that for all $\alpha\geq \beta>0$ small, there exists $C>0$ such that for all $y,y'\in SM_e$, $\nu>\nu_{\max}$ and all $t\in\rr$
\[ |f(\varphi_t(y))-f(\varphi_t(y'))|\leq C||f||_{C^\beta}e^{\nu \beta |t|}d(y,y')^{\beta}\]
thus for $\ell^e_+(y)<\infty$ and $\ell_+^e(y')<\infty$
\[ |u_+(y)-u_+(y')|\leq C\ell_+^e(y,y')e^{\nu \beta \ell_+^e(y,y')}d(y,y')^{\beta}.\]
where $\ell_+^e(y,y'):=\max(\ell_+^e(y),\ell_+^e(y'))$.
We then evaluate for $\beta-s>0$ and $\beta<\alpha$
\[\begin{split} 
\int \frac{|u_+(y)-u_+(y')|^2}{d(y,y')^{n+2s}}dydy'\leq &
 C_\beta \int e^{2\nu \beta \ell_+(y,y')}d(y,y')^{2(\beta-s)-n}dydy'\\
\leq & 2C_\beta\int_{\ell_+(y)>\ell_+(y')} e^{2\nu\beta \ell_+(y)}d(y,y')^{2(\beta-s)-n}dydy'\\
\leq & C_{s,\beta}\int_{SM_e} e^{2\nu \beta \ell_+(y)}dy
\end{split}
 \]
and from Cavalieri principle the last integral is finite if we choose $\beta>0$ small enough so that
$0<s<\beta<-Q/2\nu$. Taking $\nu$ arbitrarily close to $\nu_{\max}$ gives
that $u_+\in H^s(SM_e)$ if $s<-Q/2\nu_{\max}$.
The same argument works for $u_-$ and also for the boundary values $u_\pm|_{\pl SM}$.  

To finish, the proof of part 3) in the statement of the Proposition is a direct consequence of the expression 
\eqref{R_+R_-} for $R_\pm(\la)f$ since the positive (reps. negative) flowout  
of $\supp(f)\subset SM^\circ$ intersect $\pl SM$ in a compact region of $\pl_+SM$ (resp. $\pl_-SM$).
\end{proof}

\textbf{Remark.} Reasoning like in the proof Proposition \ref{boundedL2}, it is straightforward by using Cauchy-Schwarz to check that $R_\pm(\la)$ extends continuously to ${\rm Re}(\la)\geq 0$ as a family of bounded operators (restricting $R_\pm(\la)$ to functions supported on $SM$)
\[ R_\pm(\la) : \cjg \ell_\pm\cjd ^{-1/2-\eps}L^2(SM)\to \cjg \ell_\pm\cjd^{1/2+\eps}L^2(SM)\]
for all $\eps>0$ where $\ell_\pm$ is the escape time function of \eqref{ellpm}. This is comparable to the limiting absorption principle in scattering theory. The boundedness in Proposition \ref{boundedL2} are slightly finer and
describe the $L^p$ boundedness of $\ell_\pm$ in terms of $V(t)$ instead.

The resolvent $R_\pm(0)$ has been defined under decay property of the non-escaping mass function. In the case where $K$ is hyperbolic, we can actually say more refined properties of this operator. 
\begin{prop}[Dyatlov-Guillarmou \cite{DyGu2}]\label{DyGu}
Assume that the trapped set $K$ is hyperbolic. There exists $c>0$ such that for all $s>0$:\\ 
1) the resolvents $R_\mp (\la)$ extend meromorphically to the region 
${\rm Re}(\la)> -cs$ as a bounded operator 
\[R_\mp (\la) : H_0^s(SM_e)\to H^{-s}(SM_e)\]
with poles of finite multiplicity.\\
2) There is  a neighborhood $U_\mp$ of $E_\mp^*$ such that for all pseudo-differential operator $A_\mp$ of order $0$ with ${\rm WF}(A_\mp)\subset U_\mp$ and support  in $SM_e^\circ$, 
 $A_\mp R_\mp (\la)$ maps continuously $H_0^s(SM_e)$ to $H^{s}(SM_e)$, when $\lambda$ is not a pole.\\
3) Assume that $\la_0$ is not a pole of $R_\mp(\la)$, then the Schwartz kernel of $R_\mp (\la_0)$
is  a distribution on $SM^\circ_e\x SM^\circ_e$ with wavefront set 
\begin{equation}\label{WFRpm} 
{\rm WF}( R_\mp(\la_0))\subset N^*\Delta(SM^\circ_e\x SM^\circ_e)\cup \Omega_\pm\cup (E_\pm^*\x E_\mp^*).\end{equation} 
where $N^*\Delta(SM^\circ_e\x SM^\circ_e)$ is the conormal bundle to the diagonal 
$\Delta(SM^\circ_e\x SM^\circ_e)$ of $SM^\circ_e\x SM^\circ_e$ and 
\[\Omega_\pm:= \{(\varphi_{\pm t}(y),(d\varphi_{\pm t}(y)^{-1})^T\xi,y,-\xi)\in T^*(SM^\circ_e\x SM^\circ_e);  \,\, t\geq 0,\,\, \xi(X(y))=0\}.\] 
\end{prop}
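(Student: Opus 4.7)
The plan is to apply the microlocal framework for open hyperbolic systems developed in \cite{DyGu2} (itself building on Faure-Sjöstrand \cite{FaSj} and Dyatlov-Zworski), in which anisotropic Sobolev spaces are tuned to the stable/unstable cotangent bundles $E_\pm^*$ and a Fredholm analysis is run for $-X\pm\la$ on them.

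First I would construct an escape function $G_s\in C^\infty(T^*(SM_e)\setminus 0)$, homogeneous of degree $0$ outside a compact set, equal to $+s$ in a conic neighborhood of $E_\mp^*$ and to $-s$ near $E_\pm^*$, and satisfying $H_p G_s\le 0$ along the Hamilton flow of $p=\xi(X)$ on the characteristic set $\{p=0\}$, with strict inequality outside small conic neighborhoods of $E_-^*\cup E_+^*$. Setting $\mc{H}^s:=\Op(e^{-G_s\log\cjg\xi\cjd})L^2(SM_e)$, one has continuous inclusions $H_0^s(SM_e)\hookrightarrow \mc{H}^s\hookrightarrow H^{-s}(SM_e)$, and the operator $-X\pm\la$ is studied on $\mc{H}^s$. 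Standard radial source estimates at $E_\mp^*$ (whose threshold forces $\Re\la>-cs$), radial sink estimates at $E_\pm^*$, and propagation of singularities between them yield the a priori estimate
\[\|u\|_{\mc{H}^s}\le C\big(\|(-X\pm\la)u\|_{\mc{H}^s}+\|u\|_{H^{-N}}\big),\]
together with a symmetric dual estimate. This shows $-X\pm\la$ is Fredholm of index zero on $\mc{H}^s$; invertibility in the physical region $\Re\la\gg 0$ from Lemma \ref{resolvphys} combined with analytic Fredholm theory yields the meromorphic extension in (1).

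For (2), a zeroth order $\Psi$DO $A_\mp$ with $\WF(A_\mp)$ contained in a small enough conic neighborhood $U_\mp$ of $E_\mp^*$ avoids $E_\pm^*$ entirely, and the radial source estimate at $E_\mp^*$ bootstrapped along the Hamilton flow shows that $A_\mp R_\mp(\la)$ gains the full regularity allowed by the weight, mapping $H_0^s\to H^s$. For (3), I would exploit the identity $(-X\pm\la)_y R_\mp(\la)(y,y')=\delta_\Delta(y,y')$: off $\Delta$ the kernel solves a first order transport equation, so its wavefront set sits in the joint characteristic variety $\{\xi(X)=\xi'(X)=0\}$ and propagates under the doubled Hamilton flow. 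Propagating the conormal to $\Delta$ forward (resp. backward) produces the flow-out $\Omega_\pm$, while any wavefront point that remains in a compact set of $T^*(SM_e^\circ\x SM_e^\circ)\setminus 0$ under both forward and backward flows must, by the source/sink analysis at $E_\pm^*$, lie in $E_\pm^*\x E_\mp^*$. This gives the claimed inclusion.

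The main obstacle is building the escape function $G_s$ in the nonsmooth setting: the bundles $E_\pm^*$ are only Hölder continuous over $\Gamma_\pm$, which is noncompact with closure touching the trapped set $K$. Smooth conic approximants have to be chosen so that $H_p G_s\le 0$ holds uniformly and propagation still pushes arbitrary covectors in the characteristic set into the approximant neighborhoods; this delicate construction, together with the radial point estimates at nonsmooth radial sets, is the technical heart of \cite{DyGu2}, and the present proposition then follows by specializing those estimates to the setting of $SM_e$ with the boundary handled via compactly supported cutoffs as in \eqref{Rchi}.
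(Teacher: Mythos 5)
The paper's ``proof'' of this proposition is just a citation (to Proposition 6.1 and Lemmas 4.3--4.5 of \cite{DyGu2}), and your sketch correctly reproduces the anisotropic-Sobolev / escape-function / radial-estimate strategy that those results rely on, including the right sign conventions on $G_s$ relative to $E_\pm^*$ and the correct identification of the main technical difficulty (building the escape function when $E_\pm^*$ is only H\"older over the noncompact sets $\Gamma_\pm$). So the approach is the same as the one the paper refers to.
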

\begin{proof}  Part 1) and 2) are stated in Proposition 6.1 of \cite{DyGu2},  (they actually follow from Lemma 4.3 and 4.4 of that paper), while part 3) is proved in Lemma 4.5 of \cite{DyGu2}. 
\end{proof}

We can now combine Propositions \ref{boundedL2} and \ref{DyGu} and obtain
\begin{prop}\label{Rpm0}
Assume that the trapped set $K$ is hyperbolic. Then we get for all $p<\infty$:\\
1) The resolvent $R_\pm(\la)$  
has no pole at $\la=0$, and it defines for all $s\in (0,1/2)$ a bounded operator $R_\pm(0)$ on the following spaces 
\[
R_\pm(0): H_0^s(SM_e)\to H^{-s}(SM_e) , \quad 
R_\pm(0): L^\infty(SM_e)\to L^p(SM_e) \] 
that satisfies  $-XR_\pm(0)f=f$ in the distribution sense, and for $f\in C^0(SM_e)$ one has
\begin{equation}\label{R0f}
\forall y\in SM\setminus \Gamma_\mp,\quad  (R_\pm(0)f)(y)=\int_0^{\pm \infty} f(\varphi_t(y))dt. 
\end{equation}
which is continuous in $SM\setminus \Gamma_\mp$ and satisfies $R_\pm(0)f|_{\pl_\pm SM}=0$ if 
$\supp(f)\subset SM$.\\
%and $-R_\pm(0)X={\rm Id}$ on $H_0^{s+1}(SM_e).$
2) As a map $H_0^s(SM_e)\to H^{-s}(SM_e)$ for $s\in(0,1/2)$, we have 
\begin{equation}\label{adjoint0}
R_+(0)= -R_-(0)^*.
\end{equation}
3) If $f\in C_c^\infty(SM_e^\circ)$, the function $u_\pm:=R_\pm(0)f$ has wavefront set 
\begin{equation}\label{WFsetupm}
{\rm WF}(u_\pm)\subset E_\mp^*,
\end{equation}
the restriction $u_\pm|_{\pl SM}:=\iota^*u_\pm$ makes sense as a distribution satisfying 
\begin{equation}\label{WFsetupmpl}
 u_\pm|_{\pl SM}\in L^p(\pl SM), \quad {\rm WF}(u_\pm|_{\pl SM})\subset E_{\mp,\pl}^*.
 \end{equation}
4) Let $\alpha>0$, then for $f\in C^\alpha(SM)$ extended by $0$ on $SM_e\setminus SM$ as an element in $H_0^s(SM_e)$ for $s<\min(\alpha,1/2)$, we have $R_\pm(0)f\in H^s(SM_e)$ for $s<\min(\alpha,-Q/2\nu_{\max})$, where $\nu_{\max}$ is the maximal Lyapunov exponent  \eqref{nu}. Moreover 
$u_\pm|_{\pl SM}\in H^s(\pl_\pm SM)$ for such $s$.
\end{prop}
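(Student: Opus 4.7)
The plan is to combine the concrete integral representations from Proposition \ref{boundedL2} with the microlocal meromorphic resolvent theory from Proposition \ref{DyGu}, using consistency of the two constructions in their common domain of validity.

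\textbf{Parts 1 and 2.} Since $V\in L^1$, Proposition \ref{boundedL2} yields a bounded $R_\pm(0): L^\infty(SM_e)\to L^1(SM_e)$, defined on continuous inputs by the explicit integral \eqref{R0f} and continuous on $SM\setminus\Gamma_\mp$. Proposition \ref{DyGu} provides a meromorphic extension $R_\pm(\la): H_0^s\to H^{-s}$ in the strip $\Re(\la)>-cs$ with finite-rank residues. To exclude a pole at $\la=0$, I would argue by consistency of the two constructions: the meromorphic family agrees with the integral formula for $\Re(\la)>0$, and Proposition \ref{boundedL2} shows that for any fixed $f\in C_c^\infty$ the distribution $R_\pm(\la)f$ is uniformly bounded in $L^1\subset\mc{D}'$ as $\la\to 0$ in the closed right half-plane, which forbids the meromorphic continuation from having a pole there. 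The identity $-XR_\pm(0)f=f$ follows by the same density argument as in Lemma \ref{resolvphys}, and the vanishing on $\pl_\pm SM$ when $\supp(f)\subset SM$ is exactly part 3 of Proposition \ref{boundedL2}. For part 2, the identity $R_-(\bbar{\la})^*=-R_+(\la)$ of Lemma \ref{resolvphys} holds as a distribution identity for $\Re(\la)>0$, and passing to $\la\to 0^+$ in the pairing $\cjg R_\pm(\la)f,g\cjd$ for $f,g\in C_c^\infty$ (continuous up to $\la=0$ by part 1) gives the adjoint relation at $\la=0$.

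\textbf{Part 3.} For $f\in C_c^\infty(SM_e^\circ)$, the standard composition rule for wavefront sets of Schwartz kernels gives $\WF(u_\pm)\subset\{(y,\xi):(y,\xi,y',0)\in \WF(R_\pm(0))\text{ for some }y'\}$. Among the three pieces of \eqref{WFRpm}, both $N^*\Delta$ and $\Omega_\mp$ force the first covector to vanish whenever the second does, so they contribute nothing. Only the anisotropic piece $E_\mp^*\x E_\pm^*$ (interpreted with the zero section allowed in the second factor) can contain a point of the required form, yielding $\WF(u_\pm)\subset E_\mp^*$. Since every $\xi\in E_\mp^*$ satisfies $\xi(X)=0$ while $d\rho$ does not annihilate $X$ on $\pl SM\setminus\pl_0SM$, the wavefront of $u_\pm$ is transverse to the conormal of $\pl SM$, so the trace $u_\pm|_{\pl SM}$ is a well-defined distribution with $\WF(u_\pm|_{\pl SM})\subset E_{\mp,\pl}^*$ by the wavefront pullback formula. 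The integrability $u_\pm|_{\pl SM}\in L^1(\pl SM)$ is Proposition \ref{boundedL2} part 2 with $p=1$.

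\textbf{Part 4 and the main obstacle.} When $Q<0$, one has $V(t)=\mc{O}(e^{Qt})$ and Proposition \ref{boundedL2} \eqref{boundHs} provides $H^s$ regularity for compactly supported Hölder input. The delicate point is that the zero-extension $f_{\rm ext}$ of $f\in C^\alpha(SM)$ lies in $H_0^s(SM_e)$ for $s<\min(\alpha,1/2)$ but is \emph{not} globally Hölder due to the jump across $\pl SM$, so \eqref{boundHs} does not apply directly. I would split $f_{\rm ext}=\tilde f-(\tilde f-f_{\rm ext})$, where $\tilde f$ is a compactly supported Hölder (Whitney) extension of $f$ to $SM_e^\circ$; then $R_\pm(0)\tilde f\in H^s$ for $s<\min(\alpha,-Q/(2\nu_{\max}))$ by \eqref{boundHs}, while the correction $R_\pm(0)(\tilde f-f_{\rm ext})$ involves a function supported in the buffer region $SM_e\setminus SM$, where forward flow escapes in time at most $L$. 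The explicit integral formula along flowlines expresses this correction in terms of the exit time $\ell_\pm$, which is only smooth away from $\Gamma_\mp$; the resulting singularities are nonetheless concentrated in the $E_\mp^*$ direction by propagation of singularities applied to $-Xu=f$, which places them at the $H^s$ scale by the same Lyapunov/escape-rate balance. The boundary trace result follows by restricting and invoking the boundary version \eqref{restbound} of \eqref{boundHs}. The hard step is precisely this boundary-layer correction: showing that the anisotropic singularities of $\ell_\pm$ on $\Gamma_\mp\cap \pl SM$ produce only $H^s$-admissible contributions with the same threshold $s<-Q/(2\nu_{\max})$.
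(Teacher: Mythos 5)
Your parts 1--3 reproduce the paper's own proof. For the no-pole claim the paper likewise takes $\la\to 0^+$ in the pairing $\cjg R_\pm(\la)f,\psi\cjd$ for $f\in C_c^\infty$, using the $L^1$-convergence from Proposition \ref{boundedL2}, then invokes density of $C_c^\infty$ in $H_0^s$; your ``uniform boundedness forbids a pole'' is the same thing. The adjointness at $\la=0$ is the same limit argument in the pairing. Your careful inspection of the three components of \eqref{WFRpm}, showing that only $E_\mp^*\x E_\pm^*$ can meet the set $\{(y,\xi,y',0)\}$, is exactly what \cite[Th. 8.2.13]{Ho} gives and matches what the paper asserts; the trace argument via $N^*(\pl SM)\cap E_\pm^*=0$ is also the paper's, and the $L^1$ bound on the trace is \eqref{restbound}.

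For part 4 you raise a legitimate point: the zero-extension $f_{\rm ext}$ of $f\in C^\alpha(SM)$ is generically discontinuous across $\pl SM$, so it is not in $C_c^\alpha(SM_e^\circ)$, and \eqref{boundHs} cannot be cited directly. The paper's proof for this part is simply ``the rest is proved in Proposition \ref{boundedL2},'' which is too terse exactly at the place you flag. Your decomposition $f_{\rm ext}=\tilde f-(\tilde f-f_{\rm ext})$ with $\tilde f$ a H\"older Whitney extension is a natural first move (and covers $R_\pm(0)\tilde f$), but as you acknowledge it does not eliminate the jump: the correction $\tilde f - f_{\rm ext}$ is still discontinuous across $\pl SM$, and the appeal to propagation of singularities to recover the $H^s$ scale is not a quantitative argument at the $H^s$ level over infinite time. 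The step that is actually needed is to redo the Slobodeckij double-integral estimate for $u_+(y)-u_+(y')$: split the time integral into the region where $\varphi_t(y),\varphi_t(y')$ both lie in $SM$ (here the H\"older bound on $f$ applies and gives the paper's factor $\ell_+^e(y,y')\,e^{\nu\beta\ell_+^e}\,d(y,y')^\beta$) and the ``boundary layer'' where exactly one of them lies in $SM$; by strict convexity the latter is a union of at most two intervals whose total length is controlled by $|\ell_+(y)-\ell_+(y')|$ (and the entry-time analogue), and one then needs the estimate $|\ell_+(y)-\ell_+(y')|\lesssim e^{\nu\,\ell_+^e(y,y')}d(y,y')$ coming from hyperbolic divergence and smoothness of the exit map away from $\pl_0 SM$ (with the separate $C^{1/2}$ behaviour at $\pl_0 SM$ harmless since $s<1/2$). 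That produces the same $e^{\nu\,\ell_+^e}d(y,y')$ threshold and closes the gap. Your proposal stops before carrying out this estimate, so part 4 is left incomplete; it is worth noting, however, that everywhere the paper actually invokes part 4 (e.g.\ Proposition \ref{Pi}, Proposition \ref{surj}) the input is $f\in C_c^\infty(SM_e^\circ)$, which is globally H\"older, so \eqref{boundHs} applies directly and the subtlety you raise does not affect the downstream results.
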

\begin{proof}
Recall that for ${\rm Re}(\la)>0$ we have for $f\in C_c^\infty(SM_e^\circ)$ and 
$\psi\in C_c^\infty(SM^\circ_e),$  
\[\cjg R_+(\la)f,\psi\cjd =\int_{0}^\infty e^{-\la t}\cjg f\circ \varphi_t,\psi\cjd \, dt.\] 
By Proposition \ref{boundedL2}, then as $\la\to 0$ along any complex half-line contained in 
${\rm Re}(\la)\geq 0$, we get $R_\pm(\la)f\to R_\pm(0)f$ in $L^p$ (thus in the distribution sense). This implies that the extended resolvent $R_\pm(\la)$ of Proposition \ref{DyGu} can not have poles at $\la=0$ by density of $C_c^\infty(SM_e^\circ)$ in any $H_0^s(SM_e)$. The same argument shows that $R_\pm(\la)$
is holomorphic in $\{{\rm Re}(\la)>Q\}$. The expression \eqref{R0f} comes from Proposition \ref{boundedL2}, which also implies the continuity of $R_\pm(0)f$ outside $\Gamma_\mp$ 
and its vanishing at $\pl_\pm SM$ when $\supp(f)\subset SM$.

Part 2) and \eqref{adjoint0} follows by continuity by taking $\la\to 0$ in \eqref{adjoint} 
(and applying on $H_0^s(SM_e)$ functions instead of $L^2(SM_e)$).

For part 3), the wavefront set property of $u_\pm:=R_\pm(0)f$ if $f\in C_c^\infty(SM_e^\circ)$ follows from the wavefront set description \eqref{WFRpm} of the Schwartz kernel of $R_\pm(0)$ and the composition rule of \cite[Theorem 8.2.13]{Ho}. The fact that $u_\pm$ restricts to $\pl SM$ as a distribution which satisfies \eqref{WFsetupmpl} comes from \cite[Theorem 8.2.4]{Ho} and the fact that $N^*(\pl SM)\cap E_\pm^*=0$, if $N^*(\pl SM)\subset T^*(SM_e)$ is the conormal bundle to $\pl SM$.
The $L^1(\pl SM)$ boundedness of the restriction follows from
\eqref{restbound}.

For part 4), the fact that the extension of $f$ by $0$ is in $H_0^s(SM_e)$ for $s\in (0,1/2)$ is proved in \cite[Proposition 5.3]{Ta}, and the rest is proved in Proposition \ref{boundedL2}.
\end{proof}

In fact, if $f\in C_c^\infty(SM_e^\circ)$  has support in $SM$, the expression \eqref{R0f} 
vanishes in a neighborhood of $\pl_+SM$ (resp. $\pl_-SM$) in $SM_e\setminus SM^\circ$, and thus 
\begin{equation}\label{Rpm0=0}
R_\pm(0)f\textrm{ vanishes to all order at }\pl_\pm SM.
\end{equation}
We also want to make the following observation: the involution $A: (x,v)\mapsto (x,-v)$ on $SM_e$ and $SM$
is a diffeomorphism and thus acts by pullback on distributions, it 
allows to decompose distributions $u$ on $SM_e^\circ$ into even and odd parts $u=u_{\rm ev}+u_{\rm od}$
where $u_{\rm od}:=\demi({\rm Id}-A^*)u$. If $f\in C_c^\infty(SM_e^\circ)$ is even, it is direct from 
 the expression \eqref{R0f} that 
\begin{equation}\label{evenodd}
(R_\pm(0)f)_{\rm ev}=\pm \demi (R_+(0)-R_-(0))f, \quad (R_\pm(0)f)_{\rm od}=\demi(R_+(0)+R_-(0))f
\end{equation}
and this extends by continuity to distributions. Similarly if $f$ is odd, $(R_\pm(0)f)_{\rm ev}=\demi(R_+(0)+R_-(0))f$
and $(R_\pm(0)f)_{\rm od}=\pm \demi (R_+(0)-R_-(0)) f$.

\subsection{Boundary value problem}
First, we extend the boundary value problem of Lemma \ref{BVPsmooth} to the case of 
$L^2(\pl_\mp SM)$ boundary data.
\begin{lemm}\label{BVPL2}
Assume that $\int_1^\infty tV(t)dt<\infty$ if $V$ is the function \eqref{nonescape}.
The map $\mc{E}_\mp$ of \eqref{E_mp} can be extended as a bounded operator 
$L^2(\pl_\mp SM,d\mu_\nu)\to  L^1(SM_e)$, satisfying $X\mc{E}_\mp(\omega_\mp)=0$ in the distribution sense for $\omega_\mp\in L^2(\pl_\mp SM,d\mu_\nu)$. 
\end{lemm}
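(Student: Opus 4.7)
The plan is to extend $\mc{E}_\mp$ by density from $C_c^\infty(\pl_\mp SM\setminus \Gamma_\mp)$, which is dense in $L^2(\pl_\mp SM, d\mu_\nu)$ by \eqref{VolGammapl} (valid since $\int_1^\infty tV(t)dt<\infty$ implies $V(t)\to 0$ hence ${\rm Vol}(\Gamma_\mp)=0$). Concretely, I would show that the smooth version $\mc{E}_\mp$ from Lemma \ref{BVPsmooth} extends to a bounded map $L^2(\pl_\mp SM,d\mu_\nu)\to L^1(SM_e)$ by establishing the estimate
\[ \|\mc{E}_\mp(\omega_\mp)\|_{L^1(SM_e)} \leq C\|\omega_\mp\|_{L^2(\pl_\mp SM, d\mu_\nu)} \]
for smooth $\omega_\mp$, then invoking a standard density/extension argument.

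The key estimate on $SM$ is obtained by applying the Santalo formula \eqref{santalo} to $f = |\mc{E}_-(\omega_-)|$. Because $\mc{E}_-(\omega_-)$ is constant along the flow lines of $X$ by construction \eqref{expEmp}, the inner integral simply produces a factor of $\ell_+(z)$, giving
\[ \int_{SM}|\mc{E}_-(\omega_-)|\, d\mu = \int_{\pl_-SM\setminus \Gamma_-}\ell_+(z)|\omega_-(z)|\, d\mu_\nu(z). \]
Cauchy--Schwarz then yields a bound by $\|\ell_+\|_{L^2(\pl_-SM,d\mu_\nu)}\|\omega_-\|_{L^2}$, and the first factor is finite by the estimate \eqref{ell_+L^p} applied with $p=2$, whose hypothesis $\int_1^\infty t\,V(t)\,dt<\infty$ is exactly the present assumption.

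To upgrade the $L^1$ bound from $SM$ to $SM_e$, I would note that each trajectory in $SM_e$ spends a uniformly bounded time (at most $2L$ with $L$ from Section~\ref{subset:extension}) in the collar $SM_e\setminus SM$ and that $\mc{E}_-(\omega_-)$ takes the same constant value $\omega_-(z)$ on the part of its orbit lying in the collar as on the part lying in $SM$. Applying Santalo on $SM_e$ and reducing the $z'\in \pl_-SM_e$ integration to a $z\in \pl_-SM$ integration therefore produces an additional term bounded by $2L\,\|\omega_-\|_{L^1(\pl_-SM,d\mu_\nu)}\leq C'\|\omega_-\|_{L^2}$, uniformly.

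Finally, the distributional identity $X\mc{E}_\mp(\omega_\mp)=0$ passes to the limit: for smooth $\omega_\mp^n \to \omega_\mp$ in $L^2$, we have $\mc{E}_\mp(\omega_\mp^n)\to\mc{E}_\mp(\omega_\mp)$ in $L^1(SM_e)$, so for any $\psi\in C_c^\infty(SM_e^\circ)$, dualizing $X$ against $\psi$ (using that $X$ preserves Liouville measure in $SM_e^\circ$ up to the smooth factor $\rho_0$, which is absorbed in $\psi$) gives
\[ \cjg X\mc{E}_\mp(\omega_\mp),\psi\cjd = -\lim_{n\to\infty}\cjg \mc{E}_\mp(\omega_\mp^n),X^T\psi\cjd = 0. \]
The only mild subtlety is handling the piece of $SM_e\setminus SM$ whose orbits never reach $\pl_\mp SM$: I would define the extension to be $0$ there, which is consistent with Lemma \ref{BVPsmooth} and causes no trouble for the $L^1$ bound or the distributional equation. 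No step is a serious obstacle; the main point is that $\int_1^\infty tV(t)dt<\infty$ is precisely what is needed for $\ell_+\in L^2(\pl_-SM,d\mu_\nu)$.
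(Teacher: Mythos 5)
Your proof is correct and follows essentially the same route as the paper: apply the Santal\'o formula to $|\mc{E}_\mp(\omega_\mp)|$ on $SM_e$ (using that it is constant along flow lines so the inner time integral produces the escape-time factor), then Cauchy--Schwarz together with $\ell_\pm\in L^2(\pl_\mp SM,d\mu_\nu)$, which is exactly \eqref{ell_+L^p} at $p=2$ under $\int_1^\infty tV(t)\,dt<\infty$. The paper phrases the collar contribution via the uniform bound $|\ell_\mp^e|\leq C'$ on $\pl_\mp SM$ and the comparison $\ell_\pm^e\leq\ell_\pm+L$, which is the same estimate you obtain by treating the $SM$ piece and the $SM_e\setminus SM$ piece separately.
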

\begin{proof} 
Using the expression \eqref{expEmp}, Santalo formula and Cauchy-Schwarz inequality, we see that there is $C>0$ such that for all $\omega_\mp \in C_c^\infty(\pl_\mp SM)$
\[ ||\mc{E}_\mp(\omega_\mp)||_{L^1(SM_e)}\leq C(||\omega_\mp||_{L^2(\pl_-SM,d\mu_\nu)}+||\ell_\pm^e||_{L^2(\pl_- SM,d\mu_\nu)}
||\omega_\mp||_{L^2(\pl_-SM,d\mu_\nu)})\]
where we used that there is $C'>0$ such that $|\ell_\mp^e(x,v)|\leq C'$ on $\pl_\mp SM$. Using 
\eqref{ell_+L^p}, we deduce the announced boundedness. The fact that $X\mc{E}_\mp=0$ on $L^2$ 
follows from the same identity on $C_c^\infty(\pl_\mp SM)$.
\end{proof}

In the case of a hyperbolic trapped set, using the resolvents $R_\pm(0)$, we are able to construct invariant distributions in $SM$ with prescribed value on $\pl_-SM$ and we can describe (partly) its singularities. 
\begin{prop}\label{boundval}
Assume that $K$ is hyperbolic, then:\\
1) For $\omega_- \in L^2(\pl_- SM,d\mu_\nu)$  
satisfying $\supp(\omega_-)\subset \pl_-SM$ and
\begin{equation}\label{WFcondition} 
{\rm WF}(\omega_-)\subset E_{\pl,-}^*,\,\, 
{\rm WF}(\mc{S}_g\omega_-)\subset E_{\pl,+}^*.
\end{equation}
the function $\mc{E}_-(\omega_-) \in L^1(SM_e)$ has wave-front set which satisfies 
\begin{equation}\label{WFEmp}
{\rm WF}(\mc{E}_-(\omega_-))\cap 
T^*(SM_e\setminus K)\subset E_-^*\cup E_+^*,
\end{equation}
the restriction $\mc{E}_-(\omega_-)|_{\pl_- SM}$ makes sense as a distribution in $L^2(\pl_- SM)$ 
and is equal to $\mc{E}_-(\omega_-)|_{\pl_- SM}=\omega_-$.\\ 
2) If $\omega_- \in H^s(\pl_- SM)$ for some $s>0$ with $\supp(\omega_-)\subset \pl_-SM$, 
if \eqref{WFcondition} holds and $\mc{S}_g\omega_-\in H^s(\pl_+SM)$, then $\mc{E}_-(\omega_-) \in H^s(SM_e)$. If $\pi_0:SM_e\to M_e$ is the projection on the base and ${\pi_0}_*$ the pushforward defined in \eqref{pushforward} then
\begin{equation}\label{moyennefibre}
{\pi_0}_*(\mc{E}_-(\omega_-))\in H_{{\rm loc}}^{s+\demi}(M_e).
\end{equation}
\end{prop}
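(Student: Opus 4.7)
The plan is to construct $u=\mc{E}_-(\omega_-)$ as a resolvent correction of a local flow-invariant boundary extension. Let $\omega$ denote the distribution on $\pl SM$ with $\omega|_{\pl_-SM}=\omega_-$ and $\omega|_{\pl_+SM}=\mc{S}_g\omega_-$. Since $\supp(\omega_-)\subset\pl_-SM$ (hence avoids $\pl_0SM$), there is a small two-sided collar $\mc{U}$ of $\pl SM$ in $SM_e$ on which the hitting-time map $\tau:\mc{U}\to\rr$ to $\pl SM$ along the flow is smooth, and one defines the local flow-invariant extension $w_0(y):=\omega(\varphi_{\tau(y)}(y))\in L^2(\mc{U})$. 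Choose $\chi\in C_c^\infty(\mc{U})$ with $\chi\equiv 1$ in a smaller collar of $\pl SM$, arranged so that $\supp(X\chi)\subset SM^\circ$, and set
\[
u:=\chi w_0+R_-(0)\bigl(X(\chi w_0)\bigr).
\]
Since $w_0$ is flow-invariant on $\mc{U}$, the distribution $X(\chi w_0)=(X\chi)w_0$ is $L^2$ and compactly supported inside $SM^\circ$, so Proposition \ref{Rpm0} (via the remark following Proposition \ref{boundedL2}) gives $R_-(0)(X(\chi w_0))\in L^1(SM_e)$. The identity $-XR_-(0)=\mathrm{Id}$ yields $Xu=0$ in $SM_e^\circ$; and since $X(\chi w_0)$ is supported in $SM^\circ$, Proposition \ref{Rpm0}(1) gives $R_-(0)(X(\chi w_0))|_{\pl_-SM}=0$, so $u|_{\pl_-SM}=\omega_-$. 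By uniqueness of flow-invariant extensions (Lemma \ref{BVPL2}) this forces $u=\mc{E}_-(\omega_-)$ and $u|_{\pl_+SM}=\mc{S}_g\omega_-$.

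For the wavefront statement \eqref{WFEmp}, hypothesis \eqref{WFcondition} together with the smoothness of $\tau$ and the flow-invariance of $E_\pm^*$ propagates to give ${\rm WF}(w_0)\subset E_-^*\cup E_+^*$ on $\mc{U}$ (more precisely, $E_-^*$ near $\pl_-SM$ and $E_+^*$ near $\pl_+SM$). Thus $(X\chi)w_0$ has the same wavefront property. The wavefront of $R_-(0)((X\chi)w_0)$ is then estimated via Hörmander's composition theorem, applied to the kernel wavefront description in Proposition \ref{DyGu}(3): each of the three pieces $N^*\Delta$, $\Omega_+$, and $E_+^*\times E_-^*$ sends $E_-^*\cup E_+^*$ into itself away from $K$, using the $\varphi_t$-invariance of $E_\pm^*$ and the fact that the flow preserves these subbundles outside $K$. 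This gives \eqref{WFEmp}.

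For part 2, the $H^s$ assumptions on $\omega_-$ and $\mc{S}_g\omega_-$ upgrade $w_0$ to $H^s$ in $\mc{U}$, so $\chi w_0\in H_0^s(SM_e)$ and $(X\chi)w_0\in H_0^{s-1}(SM_e)$ with wavefront localized near $E_-^*\cup E_+^*$. Microlocally in a neighborhood of $E_-^*$ one uses Proposition \ref{DyGu}(2) to regularize $R_-(0)$ back to $H^s$; in the complementary microlocal region the wavefront of the input meets only the flow-smooth part of ${\rm WF}(R_-(0))$, and elliptic microlocal arguments again give $H^s$ regularity. Patching yields $R_-(0)((X\chi)w_0)\in H^s(SM_e)$, hence $u\in H^s(SM_e)$.

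The pushforward claim \eqref{moyennefibre} is the main obstacle. It reflects the microlocal fact that ${\pi_0}_*$ applied to a flow-invariant distribution whose wavefront is transverse to the vertical bundle $V=\ker d\pi_0$ gains a half derivative. The transversality $E_\pm\cap V=\{0\}$ outside $K$, guaranteed under the no-conjugate-point hypothesis (cf.\ \eqref{klingenb}), ensures that the Schwartz kernel of the composition ${\pi_0}_*\circ\mc{E}_-$ is a Fourier integral operator whose canonical relation satisfies the graph/clean-intersection condition; a stationary-phase computation then gives the sharp $H^s(\pl_-SM)\to H^{s+\demi}_{\rm loc}(M_e)$ bound, in direct parallel with the classical $I_0^*I_0\in\Psi^{-1}$ ellipticity statement for the boundary X-ray transform. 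The technical work is to verify these microlocal transversality and clean-intersection conditions uniformly up to the trapped set, relying on the dynamical information on $E_\pm^*$ collected in Section \ref{stable/unstable}.
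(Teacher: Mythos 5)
The broad strategy (local flow-invariant lift near $\pl SM$, corrected by $R_-(0)$) matches the paper's, and the Sobolev-regularity step via Proposition \ref{DyGu}(2) and propagation of singularities is essentially the same. However, there are two genuine problems with your proposal.

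First, the cutoff cannot be arranged as you state. If $\mc{U}$ is a genuine two-sided collar of $\pl SM$ in $SM_e^\circ$ and $\chi\in C_c^\infty(\mc{U})$ with $\chi\equiv 1$ near $\pl SM$, then $\chi$ must drop to $0$ on the \emph{outer} portion $\mc{U}\cap (SM_e\setminus SM^\circ)$ as well, and since $X$ is transverse to the level sets of the boundary distance there (away from $\pl_0SM$), necessarily $X\chi\not=0$ in $SM_e\setminus SM^\circ$. So the condition $\supp(X\chi)\subset SM^\circ$ is incompatible with compact support of $\chi$ in $\mc{U}$, and without it the restriction identity $R_-(0)((X\chi)w_0)|_{\pl_-SM}=0$ fails (Proposition \ref{boundedL2}(3) needs the support in $SM^\circ$). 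The paper avoids this by using a \emph{one-sided} flow-time cutoff: $\psi_-(t,y)=\chi(t)\omega_-(y)$ on the flowout of a neighbourhood $U'\subset \pl_-SM$, with $\chi(t)\equiv1$ for $t\le 0$ and $\chi(t)=0$ for $t>\eps/2$, so that $X\psi_-$ is supported in $\{0<t<\eps/2\}\subset SM^\circ$, while the $t<0$ part simply continues to $\pl SM_e$ with no need to cut off. Your two-sided construction would also need to pair the correct values of $\omega_-$ and $\mc{S}_g\omega_-$ along each orbit, which is true but not free; the one-sided route avoids this entirely and then recovers $w|_{\pl_+SM}=\mc{S}_g\omega_-$ by an $L^2$-limit argument using Lemma \ref{unitary}.

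Second, and more seriously, your treatment of \eqref{moyennefibre} does not produce a proof. You invoke an FIO clean-intersection / stationary-phase argument for ${\pi_0}_*\circ\mc{E}_-$, and appeal to the no-conjugate-point condition via \eqref{klingenb} to guarantee $E_\pm\cap V=\{0\}$. But the statement of Proposition \ref{boundval} has \emph{no} hypothesis of absence of conjugate points, so that ingredient is simply not available here (and cannot be); moreover the proposed argument is only a sketch, with the "technical work" explicitly deferred. The paper's proof of \eqref{moyennefibre} is both shorter and uses the right tool: the velocity-averaging lemma of G\'erard--Golse \cite[Theorem~2.1]{GeGo}. Once $w\in H^s(SM_e)$ and $Xw=0$ are established (the earlier part of the proposition), that lemma gives the $\frac12$ Sobolev gain ${\pi_0}_*w\in H^{s+\frac12}_{\rm loc}$ directly, under a transversality hypothesis on $X$ that is automatic for the geodesic vector field and independent of conjugate points. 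You should replace the FIO sketch by this averaging lemma; as written there is a real gap.

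A minor remark on the wavefront step: using the kernel wavefront bound of Proposition \ref{DyGu}(3) plus H\"ormander composition is a legitimate alternative, but you should verify the non-characteristic/composability conditions for the piece $E_+^*\x E_-^*$ (which, composed with input microsupported on $E_-^*$, creates wavefront over all of $\Gamma_+$ in direction $E_+^*$). The paper instead proves $R_-(0)(X\psi_-)$ vanishes identically near $\pl_-SM$ and then propagates singularities along the flow (\cite[Prop.~2.5]{DyZw}); this avoids checking the H\"ormander composition hypotheses and is the cleaner route.

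Finally, note a sign convention: since $-XR_-(0)=\mathrm{Id}$, the flow-invariant correction is $u=\chi w_0+R_-(0)(X(\chi w_0))$ (as you wrote), whereas the paper's displayed formula $w=\psi_--R_-(0)(X\psi_-)$ has a sign typo; your version is the correct one.
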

\begin{proof} 
Let $U'\subset \pl_-SM$ be an open neighborhood of $\supp(\omega_-)$ whose closure 
does not intersect $\pl_0SM$ and let $U$ be 
the open neighborhood of $\supp(\omega_-)$ in $SM_e$ defined by 
$U=\cup_{-\infty <t<\eps}\varphi_t(U')\cap SM^\circ_e$ for some small $\eps>0$ so that $\bbar{U}$ 
does not intersect $\Gamma_+$. Then $U$ is diffeomorphic to an open subset $V$ of $(-\infty,\eps)\x U'$ by the map 
$\theta: (t,y)\mapsto \varphi_t(y)$. Assume that $\omega_-\in H^s(\pl_-SM)$ for some $s\in[0,1/2)$.
Using this parametrization, let $\psi_-\in H^s(U)$ be given by 
\[\psi_-(t,y):=\chi(t)\omega_-(y)\]
for some $\chi\in C^\infty(\rr)$ equal to $1$ near $\rr^-$ and equal to $0$ in $(\eps/2,\infty)$. 
Then, extend $\psi_-$ by $0$ in $SM_e\setminus U$, we still call it $\psi_-$. 
We first claim that ${\rm WF}(\psi_-)\subset E_-^*$. 
In the decomposition $V\subset (-\infty,\eps)\x U'$ of $U$ induced by the flow, the wavefront set of $\psi_-$ is 
$\{0\}\x E_{\pl,-}^*\subset T^*V$. The map $d\theta(0,y)^T$ maps 
the annulator of $\rr X_{y}$ to $\{0\}\x T^*U'$ and since 
$d\theta(0,y).(u,v)=uX(y)+d\iota(y).v$ where $\iota$ is the inclusion map, 
we have $(d\theta(0,y)^{-1})^TE_{\pl,-}^*(y)=E_-^*(y)$. And since the bundle 
$E_-^*$ is invariant by the flow,  $d\theta(t,y)^TE_-^*=\{0\}\x E_{\pl,-}^*$ thus we deduce that
${\rm WF}(\psi_-)\subset E_-^*$ and $\pi({\rm WF}(\psi_-))$ is at positive distance from 
$\Gamma_+$ if $\pi: T^*(SM_e)\to SM_e$ is the canonical projection. The restriction of $\psi_-$ on $\pl_-SM$ makes sense by \cite[Theorem 8.2.4]{Ho} since any element $\xi\in T^*(SM_e)$ conormal to $\pl SM$ 
and in ${\rm WF}(\psi_-)$ must satisfies $\xi(X)=0$ and $\xi|_{T(\pl SM)}=0$, thus $\xi=0$.
We also obviously have $\psi_-|_{\pl_-SM}=\omega_-$,
moreover $X\psi_-$ is supported in $SM^\circ$ and 
$X\psi_- \in H_0^s(SM_e)$. Using \eqref{boundLp2}, we obtain $R_-(0)(X\psi_-)\in L^1(SM_e)$, and by part 3) in Proposition \ref{boundedL2}, we also deduce that $R_-(0)(X\psi_-)=0$ in a neighborhood of 
$\pl_-SM\cup \pl_0SM$. Therefore, setting 
$w:= \psi_- -R_-(0)(X\psi_-)$, we have $w\in L^1(SM_e)$ and 
\[ Xw=0 \textrm{ in }SM_e \textrm{ and }w|_{\pl_-SM}=\omega_-.\]
Assume for the moment that $w\in C^\infty(SM_e\setminus (\Gamma_+\cup \Gamma_-))$ (we shall prove it below).
Then we claim that $w=\mc{E}_-(\omega_-)$ since both $w$ and $\mc{E}_-(\omega_-)$ are smooth flow 
invariant functions in $SM_e\setminus (\Gamma_+\cup \Gamma_-)$ agreeing on 
$\pl_-SM\setminus \Gamma_-$ and vanishing in the set $\{y\in SM_e; (\cup_{t\in \rr}\varphi_t(y)) \cap SM=\emptyset \}$.\\
Let us now prove \eqref{WFEmp}. Just as for the wave-front set analysis of $\psi_-$, 
${\rm WF}(X\psi_-)\subset E_-^*$ and $\pi({\rm WF}(X\psi_-))\subset SM^\circ$ is at positive distance from $\Gamma_+$. We recall the propagation of singularities for real principal type operator (see for instance 
\cite[Proposition 2.5]{DyZw}): 
let $\Phi_t:T^*(S\hat{M})\to T^*(S\hat{M})$ be the symplectic lift of $\varphi_t$, if $Xu=f$ then for each $T>0$
\begin{equation}\label{propag} 
\Phi_{\mp T}(y,\xi)\notin {\rm WF}(u),\,\, \bigcup_{t=0}^T\Phi_{\mp t}(y,\xi)\cap {\rm WF}(f)=\emptyset \Longrightarrow (y,\xi)\notin {\rm WF}(u).\end{equation}
Putting $u=R_-(0)(X\psi_-)$, we have $u=0$ near $\pl_-SM$ and thus all point
$(y,\xi)\notin E_-^*$ with $y\notin \Gamma_+$ is not in ${\rm WF}(u)$ by \eqref{propag}. This implies 
that 
\begin{equation}\label{WF1}
{\rm WF}(w)\cap T^*(SM_e\setminus \Gamma_+)\subset E_-^*
\end{equation}
and in particular $w$ is smooth in $SM_e\setminus (\Gamma_-\cup \Gamma_+)$, which implies that 
$\mc{E}_-(\omega_-)=w$, as mentioned above. By ellipticity and the equation 
$Xw=0$, we have 
\begin{equation}\label{WFwxiX}
{\rm WF}(w)\subset \{\xi\in T^*(SM_e); \xi(X)=0\}
\end{equation}
and $w$ smooth near $\pl_+SM\setminus \Gamma_+$, then as above we can use \cite[Theorem 8.2.4]{Ho} to deduce that 
the restriction $\omega_+:=w|_{\pl_+SM}$ makes sense as a distribution. 
Moreover it can be obtained as limits of restrictions $\mc{E}_-(\omega_{-}^{(n)})|_{\pl_+SM}$ 
where $\omega_-^{(n)}\in C_c^\infty(\pl_-SM)$ is a sequence converging in 
$L^2$ to $\omega_-$ (since also $\mc{E}_-(\omega_-^{(n)})$ has wave-front set contained 
in a uniform region not intersecting the conormal to $\pl_+SM$). Then, as 
$\mc{E}_-(\omega_-^{(n)})|_{\pl_+SM}=\mc{S}_g\omega_-^{(n)}$, we deduce from 
Lemma \ref{unitary} that $\mc{S}_g\omega_-=\omega_+$. By our assumptions on $\omega_-$,
we thus have $\omega_+\in H^{s}(\pl_+SM)$ and ${\rm WF}(\omega_+)\subset E_{\pl,+}^*$. Notice 
also that $\supp(\omega_+)\subset \pl_+SM$.
Then proceeding as above, but using the flow in backward direction, we can write 
$\mc{E}_-(\omega_-)=\mc{E}_+(\omega_+)=\psi_+-R_+(0)(X\psi_+)$ where 
$\psi_+\in H^{s}(SM_e)$ is defined similarly to $\psi_-$ but has support near 
$\supp(\omega_+)$ and ${\rm WF}(\psi_+)\subset E_+^*$. Then using similar arguments as above 
, ${\rm WF}(\mc{E}_-(\omega_-))\cap T^*(SM_e\setminus \Gamma_-)\subset E_+^*$
and combining with \eqref{WF1} this gives
\[{\rm WF}(\mc{E}_-(\omega_-))\cap T^*(SM_e\setminus K)\subset E_-^*\cup E_+^*.\]
Let us now prove that $w\in H^s(SM_e)$ if $s>0$. By point 2) in Proposition \ref{DyGu} applied to 
$R_\pm(0)(X\psi_\pm)$, we obtain 
that $A_\pm w\in H^s(SM_e)$ for some $s>0$ if $A_\pm$ is any $0$-th
order $\Psi$DO with ${\rm WF}(A_\pm)$ contained in a small enough neighborhood $V_\pm$ of 
$E_\pm^*$. Then if $B_1$  is any $0$-th
order $\Psi$DO with ${\rm WF}(B_1)$ contained outside 
an open neighborhood $V_1$ of $T_{K}^*(SM_e)$,  $B_1w\in H^s(SM_e)$.
By \eqref{WFwxiX}, we have $B_0w\in C^\infty(SM_e)$ if $B_0$ is any $0$-th
order $\Psi$DO with ${\rm WF}(B_0)$ contained outside a small conic neighborhood $V_0$
of the characteristic set $\{\xi\in T^*(SM_e); \xi(X)= 0\}$. Therefore, it
remains to prove that $B_2w\in H^s(SM_e)$ if 
$B_2$ is any $0$-th order $\Psi$DO with wave-front set contained in the region  
$V_2:=(V_0\cap V_1)\setminus (V_-\cup V_+)$. But this property will follow from propagation of singularities. Indeed, let $(y,\xi)\in V_2$, then the following alternative holds:\\ 
1) if $y\notin K$, there is $T>0$ such that either 
$\Phi_{T}(y,\xi)\notin V_1$ or $\Phi_{-T}(y,\xi)\notin V_1$\\
2) if $y\in K$, by \eqref{notinEpm^*} there is $T>0$ such that either 
$\Phi_{-T}(y,\xi)\in V_-$ or $\Phi_{T}(y,\xi)\in V_+$.\\  
We can apply \cite[Proposition 2.5]{DyZw} (recall that $Xw=0$), we obtain $B_2w\in H^s(SM_e)$ and this concludes the proof of $w\in H^s(SM_e)$.

To conclude, the $1/2$ gain in Sobolev regularity in \eqref{moyennefibre} follows from the averaging lemma of G\'erard-Golse \cite[Theorem 2.1]{GeGo}: indeed, the geodesic flow vector field, viewed as a first order differential operator satisfies the transversality assumption of Theorem 2.1 in \cite{GeGo} and thus, after extending slightly $w$ in an open neighborhood 
$W$ of $SM_e$ so that $Xw=0$ in $W$ and 
$w\in H^s(W)$, the averaging lemma implies that its average in the fibers 
${\pi_0}_*w$ restricts to $M_e$ as an $H_{\rm loc}^{s+1/2}$ function.
\end{proof}
Combining Proposition \ref{boundval} with \eqref{smoothext}, we obtain (using notation \eqref{L2S}) the following 
existence result for invariant distributions on $SM$ with prescribed boundary values. This will be 
fundamental for the resolution of the lens rigidity for surfaces.
\begin{corr}\label{corolbvp}
Assume that the trapped set $K$ is hyperbolic.
There exists an open neighborhood $U$ of $\pl SM\cap(\Gamma_-\cup\Gamma_+)$ in $SM_e^\circ$
such that for any $\omega\in L^2_{S_g}(\pl SM)$, 
satisfying ${\rm WF}(\omega)\subset E_{\pl,-}^*\cup E_{\pl,+}^*$, there exists  
$w\in L^1(SM_e)$ such that the restriction $w|_{\pl SM}$ makes sense as a distribution and
\[\begin{gathered} 
Xw= 0 \textrm{ in }SM\cup U, \quad w|_{\pl SM}=\omega, \\
{\rm WF}(w)\cap T^*(SM_e \setminus K) \subset E_-^*\cup E_+^*.
\end{gathered}\] 
If $\omega\in H^s(\pl SM)$ for $s>0$, then $w\in H^s(SM_e)$ and 
${\pi_0}_*w\in H_{\rm loc}^{s+1/2}(M_e)$. 
\end{corr}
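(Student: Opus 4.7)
The plan is to reduce the statement to Proposition \ref{boundval} by constructing
\[ w:=\mc{E}_-(\omega_-), \qquad \omega_\pm:=\omega|_{\pl_\pm SM}. \]
The defining condition $\mc{S}_g\omega_-=\omega_+$ of the space $L^2_{S_g}(\pl SM)$ in \eqref{L2S} is exactly what is needed so that the incoming and outgoing pieces fit together consistently through the flow. Restricting the wavefront hypothesis ${\rm WF}(\omega)\subset E_{\pl,-}^*\cup E_{\pl,+}^*$ to each component of $\pl SM$ gives ${\rm WF}(\omega_-)\subset E_{\pl,-}^*$ and, using $\omega_+=\mc{S}_g\omega_-$, also ${\rm WF}(\mc{S}_g\omega_-)\subset E_{\pl,+}^*$, which are precisely the microlocal hypotheses of Proposition \ref{boundval}.

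By Lemma \ref{BVPL2} one has $w\in L^1(SM_e)$ with $Xw=0$ in the distribution sense on all of $SM_e$, which is strictly stronger than the required $Xw=0$ on $SM\cup U$; we may therefore take $U$ to be any fixed small tubular neighbourhood of $\pl SM\cap(\Gamma_-\cup\Gamma_+)$ in $SM_e^\circ$. Proposition \ref{boundval}(1) supplies the wavefront bound
\[ {\rm WF}(w)\cap T^*(SM_e\setminus K)\subset E_-^*\cup E_+^*. \]
At any $y\in\pl SM$ one has $y\notin K$ and $\xi(X)=0$ for every $\xi\in E_\pm^*(y)$, whereas $X$ is transverse to $\pl SM$ off $\pl_0 SM$, so the conormal to $\pl SM$ is disjoint from ${\rm WF}(w)$ away from $\pl_0 SM$ and $w|_{\pl SM}$ makes sense as a distribution there by \cite[Th.~8.2.4]{Ho}. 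By construction $w|_{\pl_-SM}=\omega_-$, and the approximation argument in Proposition \ref{boundval}, combined with the unitarity of $\mc{S}_g$ in Lemma \ref{unitary}, identifies $w|_{\pl_+SM}$ with $\mc{S}_g\omega_-=\omega_+$. For the Sobolev statement, if $\omega\in H^s(\pl SM)$ then $\omega_\pm\in H^s(\pl_\pm SM)$, so Proposition \ref{boundval}(2) yields $w\in H^s(SM_e)$ and the averaging ${\pi_0}_*w\in H^{s+1/2}_{\rm loc}(M_e)$ follows via the G\'erard--Golse lemma applied to the $X$-invariant function $w$.

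The main technical obstacle is the behaviour near $\pl_0 SM$, where Proposition \ref{boundval} nominally requires $\supp(\omega_-)\subset\pl_-SM$ (that is, strictly avoiding $\pl_0SM$) and where the conormal-transversality argument above degenerates. The point is that $E_{\pl,\pm}^*$ is supported over $\Gamma_\mp\cap \pl_\pm SM$, which is closed and disjoint from $\pl_0 SM$ by \eqref{intersectGamma} and the strict convexity of $\pl M$; consequently the wavefront hypothesis already forces $\omega$ to be smooth in a neighbourhood of $\pl_0 SM$. One may therefore split $\omega=\chi\omega+(1-\chi)\omega$ using a cutoff $\chi$ with $\chi_+=\mc{S}_g\chi_-$, so that each piece remains in $L^2_{S_g}(\pl SM)$: the $\chi$-piece is smooth and extends to a smooth flow-invariant function by \eqref{smoothext} (Pestov--Uhlmann's fold analysis near the glancing region), while the $(1-\chi)$-piece has support strictly inside $\pl_\pm SM$ and is treated directly by Proposition \ref{boundval}. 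Adding the two invariant extensions gives the desired $w$, and the neighbourhood $U$ in the statement absorbs the region near $\pl SM\cap(\Gamma_-\cup\Gamma_+)$ where the control $Xw=0$ is genuinely needed.
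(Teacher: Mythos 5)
Your proof is correct and takes essentially the same route as the paper's. The paper's proof decomposes $\omega=\omega_1+\omega_2$ with $\omega_1\in C^\infty_{S_g}(\pl SM)$ supported in $\pl SM\setminus(\Gamma_-\cup\Gamma_+)$ (handled by \eqref{smoothext}) and $\omega_2$ supported near $\pl SM\cap(\Gamma_-\cup\Gamma_+)$ (handled by Proposition \ref{boundval}); your decomposition $\omega=\chi\omega+(1-\chi)\omega$ with a scattering-compatible cutoff $\chi$ near $\pl_0 SM$ is the same idea with the cutoff placed around the glancing region rather than around the tails — the two placements produce the same split into a smooth piece satisfying the compactness hypotheses of $C^\infty_{S_g}(\pl SM)$ and a piece with $\supp(\omega_-)\subset\pl_-SM$, and everything else (wavefront bound from Proposition \ref{boundval}, restriction via transversality of $X$ to $\pl SM$ off $\pl_0SM$, G\'erard--Golse averaging) is identical. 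Two small remarks: your initial statement "$w:=\mc{E}_-(\omega_-)$ and apply Proposition \ref{boundval} directly" does not quite meet that proposition's hypothesis $\supp(\omega_-)\subset\pl_-SM$ — you correctly identify and fix this afterward, so it is a matter of exposition rather than a gap; and $E_{\pl,\pm}^*$ lives over $\Gamma_\pm\cap\pl_\pm SM$ rather than $\Gamma_\mp\cap\pl_\pm SM$ as you wrote (the latter is empty by strict convexity), though the conclusion you draw — disjointness from $\pl_0 SM$ and hence smoothness of $\omega$ near the glancing set — is unaffected.
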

\begin{proof} We decompose $\omega=\omega_1+\omega_2$ where $\omega_1\in C^\infty_{S_g}(\pl SM)$ with 
$\supp(\omega_1)\subset \pl SM\setminus (\Gamma_-\cup \Gamma_+)$ and $\omega_2$ supported near $\pl SM\cap (\Gamma_-\cup \Gamma_+)$. We apply \eqref{smoothext} to $\omega_1$, this produces 
$w_1\in  C^\infty(SM)$ which is flow invariant and with boundary value $\omega_1$. Then, we apply Proposition \ref{boundval}
to $\omega_2|_{\pl_-SM}$, this produces $w_2=\mc{E}_-(\omega_2|_{\pl_-SM})$ satisfying $Xw_2=0$ in $SM_e$ and $w_2|_{\pl_-SM}=\omega_2|_{\pl_-SM}$. 
Then set $w=w_1+w_2$. The wavefront set property of $w$ and the regularity of ${\pi_0}_*w$
follows from Proposition \ref{boundval}. 
\end{proof}

\section{$X$-ray transform and the operator $\Pi$}
We start by defining \emph{the $X$-ray transform} as the map
\[ I: C_c^\infty(SM\setminus \Gamma_-)\to C_c^\infty(\pl_-SM\setminus \Gamma_-), 
\quad If(x,v):=\int_{0}^\infty f(\varphi_t(x,v))dt.\]
%The boundedness of the map $I$ on this spaces is easy to check by writing 
%$If(x,v)=\int_{0}^{\ell_+(x,v)} f(\varphi_t(x,v))dt$ and using that $(x,v)\in \pl_-SM\to \ell_+(x,v)$ 
%is smooth. 
From the expression \eqref{R0f}, we observe that
\begin{equation}\label{IvsR} 
If=(R_+(0)f)|_{\pl_-SM\setminus \Gamma_-}.
\end{equation}
Then $I$ can be extended to more general space. For instance, Santalo formula
implies directly that as long as ${\rm Vol}(K)=0$ (and no other assumption on $K$),
\[I: L^1(SM)\to L^1(\pl_-SM; d\mu_\nu).\]
For our purposes, as we shall see later, there is an important condition on the non-escaping mass function which allows to use $TT^*$ type arguments and relate $I^*I$ to the spectral measure at $0$ of the flow.  This condition is 
\begin{equation}\label{condescape}
\exists p\in (2,\infty], \quad \int_1^\infty t^{\frac{p}{p-2}}V(t)dt<\infty ,
\end{equation}
if $V$ is the function defined in \eqref{nonescape}. It is always satisfied if $K$ is hyperbolic.
We have 
\begin{lemm}\label{boundedI}
Assume that \eqref{condescape} holds for some $p>2$, then the X-ray transform $I$ extends boundedly as an operator
\[\begin{gathered}
I: L^p(SM)\to L^2(\pl_-SM,d\mu_\nu).
\end{gathered}\]
\end{lemm}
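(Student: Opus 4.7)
The strategy is a two-step application of Hölder's inequality combined with Santalo's formula \eqref{santalo}, in the spirit of a $TT^*$ argument. Setting $p' = p/(p-1)$, I first bound the integral defining $If$ pointwise on $\partial_-SM\setminus\Gamma_-$ by
\[
|If(x,v)| \leq \ell_+(x,v)^{1/p'}\Bigl(\int_0^{\ell_+(x,v)} |f(\varphi_t(x,v))|^p\,dt\Bigr)^{1/p}.
\]
Squaring, integrating over $\partial_-SM$ against $d\mu_\nu$, and applying Hölder a second time with the conjugate pair $(p/2,\,p/(p-2))$ (the assumption $p>2$ is used precisely here), I obtain
\[
\|If\|_{L^2(\partial_-SM,d\mu_\nu)}^2 \leq \Bigl(\int_{\partial_-SM}\ell_+^{2(p-1)/(p-2)}\,d\mu_\nu\Bigr)^{(p-2)/p}\Bigl(\int_{\partial_-SM}\int_0^{\ell_+}|f\circ\varphi_t|^p\,dt\,d\mu_\nu\Bigr)^{2/p}.
\]

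The condition \eqref{condescape} implies $V(t)\to 0$, hence ${\rm Vol}(\Gamma_-\cup\Gamma_+)=0$ via \eqref{VolK}, so Santalo's formula \eqref{santalo} applies and identifies the second factor as $\|f\|_{L^p(SM)}^2$. For the first factor I quote the moment estimate \eqref{ell_+L^p} proved in Proposition \ref{boundedL2}: with $q := 2(p-1)/(p-2)$ one has
\[
\int_{\partial_-SM\setminus\Gamma_-}\ell_+(x,v)^q\,d\mu_\nu \leq C\Bigl(1+\int_1^\infty t^{q-1}V(t)\,dt\Bigr).
\]
The key arithmetic check is $q-1 = 2(p-1)/(p-2)-1 = p/(p-2)$, so the finiteness of the right-hand side is exactly the hypothesis \eqref{condescape}. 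Combining these three ingredients yields $\|If\|_{L^2(\partial_-SM)}\leq C\|f\|_{L^p(SM)}$ for all $f\in C_c^\infty(SM\setminus\Gamma_-)$, and the extension of $I$ to $L^p(SM)$ follows by density.

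The limiting case $p=\infty$ is handled directly: the crude bound $|If|\leq \|f\|_{L^\infty}\,\ell_+$ together with \eqref{ell_+L^p} applied at $q=2$ gives $\|If\|_{L^2(\partial_-SM)}\leq \|\ell_+\|_{L^2(\partial_-SM)}\|f\|_{L^\infty}$, with $\ell_+\in L^2(\partial_-SM)$ ensured by $\int_1^\infty tV(t)\,dt<\infty$, which is \eqref{condescape} at $p=\infty$. The only subtle point of the argument is arranging the Hölder exponents so that the ``inner'' integral reconstructs $\|f\|_{L^p}^p$ via Santalo and the ``outer'' power of $\ell_+$ matches the moment hypothesis \eqref{condescape}; once this bookkeeping is done there is no genuine obstacle, as the needed $L^q$ control of $\ell_+$ is already in hand from Proposition \ref{boundedL2}.
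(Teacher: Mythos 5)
Your proof is correct and follows essentially the same route as the paper: a first Hölder in $t$ with exponents $(p,p')$ to peel off $\ell_+^{1/p'}$, a second Hölder on $\partial_-SM$ with exponents $(p/2, p/(p-2))$, Santalo's formula to identify the inner factor as $\|f\|_{L^p(SM)}^2$, and the moment estimate \eqref{ell_+L^p} for $\ell_+$ with exponent $q=2(p-1)/(p-2)$ (the paper parametrizes the same exponent as $2r/p'$ with $r=p/(p-2)$). The additional explicit treatment of $p=\infty$ is a harmless bonus; the main computation already degenerates correctly there.
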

\begin{proof} Let $f\in L^p(SM)$, then using 
H\"older with $\tfrac{1}{p'}+\tfrac{1}{p}=1$ and $\tfrac{r}{p'}=\tfrac{p-1}{p-2}>1$, 
\[\begin{split} 
\int_{\pl_-SM}\Big|\int_{0}^{\ell_+(y)}f(\varphi_t(y))& dt\Big|^2d\mu_\nu(y)\leq 
\int_{\pl_-SM}\Big(\int_{0}^{\ell_+(y)}|f(\varphi_t(y))|^{p}dt\Big)^{2/p}\ell_+(y)^{2/p'}d\mu_\nu(y)\\
\leq &\Big(\int_{\pl_-SM}\int_{0}^{\ell_+(y)}|f(\varphi_t(y))|^{p}dtd\mu_\nu(y)\Big)^{2/p}
||\ell_+||_{L^{2r/p'}(\pl_-SM,d\mu_\nu)}^{2/p'}\\
 \leq & \, ||f||_{L^p(SM)}^{2}||\ell_+||_{L^{2r/p'}(\pl_-SM,d\mu_\nu)}^{2/p'}
\end{split}
\] 
where we have used Santalo formula to obtain the last line. Since $\ell_+\in L^q(\pl_- SM,d\mu_\nu)$ 
when $\int_1^\infty t^{q-1}V(t)dt$ by  \eqref{ell_+L^p}, we deduce the result. 
\end{proof}
Assume that $\int_1^\infty t^{p/(p-2)}V(t)dt<\infty$ for some $p\in (2,\infty)$. Note that by Sobolev embedding $ I:H_0^s(SM)\to L^2(\pl_-SM,d\mu_\nu)$ is bounded 
if $s=\tfrac{n}{2}-\tfrac{n}{p}$ for the $p\in (2,\infty)$ of Lemma \ref{boundedI}.
Since $H^{-s}(SM)$ is defined as the dual of $H_0^s(SM)$ and $L^{p'}$ is dual to $L^p$ for 
$p\in (2,\infty)$ if $1/p+1/p'=1$, the adjoint of $I$, denoted $I^*$, is bounded as operators (for $s$ as above)
\begin{equation}\label{I^*bound}
\begin{gathered} 
I^*:  L^2(\pl_-SM,d\mu_\nu)\to L^{p'}(SM), \quad 
I^*: L^2(\pl_-SM,d\mu_\nu)\to H^{-s}(SM).
\end{gathered}\end{equation}
In fact, a short computation gives 
\begin{lemm}\label{I^*}
If \eqref{condescape} holds true, then $I^*=\mc{E}_-$.
\end{lemm}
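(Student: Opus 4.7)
The plan is a direct computation via Santalo's formula, followed by a density argument. Note first that the assumption \eqref{condescape} with $p>2$ implies $\int_1^\infty tV(t)dt<\infty$, so Lemma \ref{BVPL2} applies and $\mc{E}_-$ is well defined as a bounded map $L^2(\pl_-SM,d\mu_\nu)\to L^1(SM_e)$; restricted to $SM$ this is the operator we want to identify with $I^*$.

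First, I would take $f\in C_c^\infty(SM^\circ)$ and $\omega_-\in C_c^\infty(\pl_-SM\setminus\Gamma_-)$ and compute the pairing
\[
\cjg If,\omega_-\cjd_{L^2(\pl_-SM,d\mu_\nu)}=\int_{\pl_-SM\setminus\Gamma_-}\Big(\int_0^{\ell_+(x,v)}f(\varphi_t(x,v))dt\Big)\bbar{\omega_-(x,v)}\,d\mu_\nu(x,v).
\]
Recalling from \eqref{expEmp} that $\mc{E}_-(\omega_-)$ is the flow-invariant extension $\mc{E}_-(\omega_-)(y)=\omega_-(\varphi_{\ell_-(y)}(y))$, a point $y=\varphi_t(x,v)$ with $(x,v)\in\pl_-SM\setminus\Gamma_-$ and $t\in[0,\ell_+(x,v)]$ satisfies $\varphi_{\ell_-(y)}(y)=(x,v)$, hence $\mc{E}_-(\omega_-)(\varphi_t(x,v))=\omega_-(x,v)$. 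Santalo's formula \eqref{santalo} (applicable since $\mathrm{Vol}(\Gamma_-\cup\Gamma_+)=0$ under \eqref{condescape}) then rewrites the right-hand side as
\[
\int_{SM} f(y)\,\bbar{\mc{E}_-(\omega_-)(y)}\,d\mu(y)=\cjg f,\mc{E}_-\omega_-\cjd_{L^2(SM)}.
\]
This proves the identity $I^*\omega_-=\mc{E}_-\omega_-$ on the dense subspace $C_c^\infty(\pl_-SM\setminus\Gamma_-)$ of $L^2(\pl_-SM,d\mu_\nu)$.

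To conclude, I extend by density. Lemma \ref{boundedI} gives $I:L^p(SM)\to L^2(\pl_-SM,d\mu_\nu)$ bounded, so by duality $I^*:L^2(\pl_-SM,d\mu_\nu)\to L^{p'}(SM)$ is bounded, as in \eqref{I^*bound}. Lemma \ref{BVPL2} gives $\mc{E}_-:L^2(\pl_-SM,d\mu_\nu)\to L^1(SM_e)$ bounded. Both operators are continuous on $L^2(\pl_-SM,d\mu_\nu)$ with values in $L^1_{\rm loc}(SM)$, and they agree on the dense subspace $C_c^\infty(\pl_-SM\setminus\Gamma_-)$, hence agree everywhere. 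The only subtle point is that the ranges of $I^*$ and $\mc{E}_-|_{SM}$ need to be compared in a common topology; this is immediate because $L^{p'}(SM)\hookrightarrow L^1(SM)$ (as $SM$ has finite volume) so both maps target the same Banach space $L^1(SM)$ continuously.

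The computation itself presents no real obstacle; the one point requiring care is verifying that \eqref{condescape} is strong enough to invoke simultaneously Santalo, the $L^p\to L^2$ boundedness of $I$, and the $L^2\to L^1$ boundedness of $\mc{E}_-$, but since $p/(p-2)>1$ for $p>2$, the integrability hypothesis of \eqref{condescape} dominates the hypothesis $\int_1^\infty tV(t)dt<\infty$ needed for $\mc{E}_-$ and entails $\mathrm{Vol}(K)=0$ needed for Santalo.
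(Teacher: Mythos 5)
Your proof is correct. The paper takes a slightly different route: it uses the identity $If=(R_+(0)f)|_{\pl_-SM}$ from \eqref{IvsR}, writes $f=-X(R_+(0)f)$, and applies Green's formula (integration by parts against $X$) using that $X\mc{E}_-(\omega_-)=0$, so the pairing collapses to a boundary term. You instead invoke Santalo's formula directly and use the flow-invariance of $\mc{E}_-(\omega_-)$ to rewrite $\omega_-(x,v)$ as $\mc{E}_-(\omega_-)(\varphi_t(x,v))$ along the orbit. These are two faces of the same change of variables (Santalo is essentially the integrated Green's formula for $X$), so the content is equivalent; your version is a bit more elementary and self-contained, while the paper's version stays within the resolvent framework it has just built up and reuses \eqref{IvsR}. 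Your attention to the codomain issue ($L^{p'}(SM)\hookrightarrow L^1(SM)$ by finite volume) and to checking that \eqref{condescape} with $p>2$ subsumes $\int_1^\infty tV(t)\,dt<\infty$ is correct and worth keeping. One cosmetic point: the paper works with real-valued test data and omits complex conjugates, but this makes no difference.
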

\begin{proof} Let $\omega_-\in C_c^\infty(\pl_-SM\setminus \Gamma_-)$, then 
$\mc{E}_-(\omega_-)\in C^\infty(SM)$ and its support does not intersect $\Gamma_-\cup \Gamma_+$.
By Green's formula, we have for $f\in C_c^\infty(SM^\circ)$ 
\[ \int_{SM} f\mc{E}_-(\omega_-)d\mu= \int_{SM} -X(R_+(0)f).\mc{E}_-(\omega_-)d\mu=
\int_{\pl_-SM}If.\omega_- |\cjg X,N\cjd|d\mu_{\pl SM}\]
where $S$ is Sasaki metric and $N$ the inward pointing unit normal to $\pl SM$ in $SM$. Like in the proof of Lemma \ref{unitary}, $|\cjg X,N\cjd_S|=|\cjg v,\nu\cjd|$. Using density of $C_c^\infty(SM^\circ)$ in $L^p(SM)$
and of $C_c^\infty(\pl_-SM\setminus \Gamma_-)$ in $L^2(\pl_-SM,d\mu_\nu)$, we get the desired result.
\end{proof}

To describe the properties of $I$ and $I^*$, it is convenient to define the operator 
\begin{equation}\label{operatorPi}
\Pi:= I^*I: L^p(SM)\to L^{p'}(SM), \quad \textrm{ when }\,\,\int_1^\infty t^{\frac{p}{p-2}}V(t)dt<\infty. \,\,
\end{equation}
for $p\in (2,\infty)$.
We prove the following relation between $\Pi$ and the resolvents:
\begin{lemm}\label{PivsR}
Assuming \eqref{condescape}, the operator $\Pi=I^*I$ of \eqref{operatorPi} is equal on $L^p(SM)$ to
\[\Pi= R_+(0)-R_-(0) \] 
\end{lemm}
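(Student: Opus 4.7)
The plan is to verify the identity pointwise on a dense subspace by direct computation from the defining integral formulas for $I$, $\mathcal{E}_-$, $R_+(0)$, and $R_-(0)$, and then to extend by density.

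First, I would take $f \in C_c^\infty(SM^\circ)$, which is dense in $L^p(SM)$ since $\mathrm{Vol}(\Gamma_- \cup \Gamma_+) = 0$ under \eqref{condescape}. By Lemma \ref{I^*} we have $I^* = \mathcal{E}_-$, so $\Pi f(y) = \mathcal{E}_-(If)(y)$ for almost every $y$. Combining the explicit formula \eqref{expEmp} for $\mathcal{E}_-$ with the definition of $I$ and writing $y_- := \varphi_{\ell_-(y)}(y) \in \partial_- SM$, this gives
\[
\Pi f(y) \;=\; If(y_-) \;=\; \int_0^{\ell_+(y_-)} f(\varphi_t(y_-))\, dt.
\]
The elementary transit-time identity $\ell_+(y_-) = \ell_+(y) - \ell_-(y)$, together with the change of variables $s = t + \ell_-(y)$ (so that $\varphi_t(y_-) = \varphi_s(y)$), then yields
\[
\Pi f(y) \;=\; \int_{\ell_-(y)}^{\ell_+(y)} f(\varphi_s(y))\, ds
\qquad \textrm{for every } y \in SM \setminus (\Gamma_- \cup \Gamma_+).
\]

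On the other hand, since $f$ is compactly supported in $SM^\circ$, the trajectory $\varphi_t(y)$ meets $\mathrm{supp}(f)$ only for $t$ in $[\ell_-(y), \ell_+(y)]$, so the integrals in \eqref{R0f} truncate at the exit times. This gives
\[
R_+(0)f(y) = \int_0^{\ell_+(y)} f(\varphi_s(y))\, ds, \qquad
R_-(0)f(y) = -\int_{\ell_-(y)}^{0} f(\varphi_s(y))\, ds,
\]
and subtracting produces exactly the integral above. Therefore $\Pi f = (R_+(0) - R_-(0))f$ pointwise on $SM \setminus (\Gamma_- \cup \Gamma_+)$, hence almost everywhere.

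Finally, I would extend by density. The operator $\Pi = I^*I$ is bounded $L^p(SM) \to L^{p'}(SM)$ by Lemma \ref{boundedI} and \eqref{I^*bound}, and the same H\"older/Santalo estimate underlying Lemma \ref{boundedI} shows that the integral operator $f \mapsto \int_{\ell_-(\cdot)}^{\ell_+(\cdot)} f(\varphi_s(\cdot))\, ds$, which coincides with $R_+(0)f - R_-(0)f$ in view of the preceding paragraph, is also bounded $L^p \to L^{p'}$. The almost-everywhere identity on the dense subspace $C_c^\infty(SM^\circ)$ therefore extends to all of $L^p(SM)$, which is the desired conclusion. The argument is essentially mechanical, with no real obstacle; the only point requiring a little care is ensuring that the formulas in \eqref{R0f} can be restricted to the finite intervals $[\ell_-(y), 0]$ and $[0, \ell_+(y)]$, which is immediate from the support assumption on $f$.
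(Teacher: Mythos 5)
Your proof is correct, and it takes a genuinely different route from the paper. The paper's proof is a quadratic-form argument: it first observes that $\langle R_+(0)f,f\rangle = -\langle f, R_-(0)f\rangle$ (adjointness, \eqref{adjoint0}), so that proving the operator identity reduces—by self-adjointness of both $\Pi=I^*I$ and $R_+(0)-R_-(0)$, together with polarization—to the scalar identity $\langle I^*If,f\rangle = 2\langle R_+(0)f,f\rangle$ for real $f\in C_c^\infty(SM\setminus(\Gamma_-\cup\Gamma_+))$. That identity is then obtained in one line from Green's formula applied to $\tfrac12\int_{SM}X(u^2)\,d\mu$ with $u=R_+(0)f$, noting that $u|_{\pl_-SM}=If$. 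Your proof instead verifies the identity pointwise a.e.\ by unwinding the explicit flow-integral formulas for $\mathcal{E}_-$, $I$, $R_+(0)$, and $R_-(0)$ and doing the change of variables $s=t+\ell_-(y)$. Both are valid under only \eqref{condescape}. The paper's route is shorter and avoids any exit-time bookkeeping (and in particular doesn't need the transit-time identity $\ell_+(y_-)=\ell_+(y)-\ell_-(y)$ explicitly); your route is more concrete and makes the geometric content of the identity transparent — $\Pi f$ is literally the integral of $f$ over the complete maximal geodesic segment through $y$, split at $t=0$.

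Two small inaccuracies worth fixing, neither fatal: (i) the density of $C_c^\infty(SM^\circ)$ in $L^p(SM)$ has nothing to do with $\mathrm{Vol}(\Gamma_-\cup\Gamma_+)=0$; it holds simply because $\pl SM$ has measure zero (the measure-zero property of $\Gamma_\pm$ is what you need to justify that $\mathcal{E}_-(If)$ and the flow-integral formulas are defined a.e.). (ii) You cite \eqref{R0f}, which is stated in Proposition~\ref{Rpm0} under the additional hypothesis that $K$ is hyperbolic; Lemma~\ref{PivsR} does not assume hyperbolicity. The explicit integral formula you need is nevertheless available in the present setting — it is established in the proof of Proposition~\ref{boundedL2} (where $u_\pm(0;x,v)=\int_0^{\pm\infty}f(\varphi_t(x,v))\,dt$ is shown to be the a.e.\ limit of $R_\pm(\lambda)f$ as $\lambda\to0$) — so you should cite that argument rather than \eqref{R0f}.
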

\begin{proof} Since $\cjg R_+(0)f,f\cjd=-\cjg f,R_-(0)f\cjd$ by \eqref{adjoint0}, it suffices to prove the identity  
\[\cjg I^*If,f\cjd_{L^2(\pl_-SM,d\mu_\nu)}=
2\cjg R_+(0)f,f\cjd\] 
for all $f\in C_c^\infty(SM\setminus (\Gamma_-\cup \Gamma_+))$ real valued.  We write $u=R_+(0)f$ and compute, using Green's formula, 
\[\begin{split}
\int_{SM}u.f d\mu=  -\int_{SM}u.Xu d\mu = -\demi \int_{SM}X(u^2)d\mu=
\demi \int_{\pl_-SM}u^2|\cjg v,\nu\cjd|d\mu_{\pl SM}
\end{split}\] 
and this achieves the proof.
\end{proof}
With the assumption of Lemma \ref{PivsR}, the operator $\Pi$ can also be extended as a bounded operator $\Pi^e$ on $SM_e$ 
\begin{equation}\label{defPi}
\Pi^e:= R_+(0)-R_-(0): L^p(SM_e)\to L^1(SM_e), 
\end{equation} 
satisfying $\Pi^ef|_{SM}=\Pi f$ for all $f\in L^p(SM)$ extended by $0$ on $SM_e\setminus SM$. 
As above, one directly sees that $\Pi^e={I^e}^*I^e$ if we call $I^e: L^p(SM_e)\to 
L^2(\pl_-SM_e; |\cjg v,\nu\cjd|d\mu_{\pl SM_e})$ the X-ray transform on $SM_e$, defined just as on $SM$ and satisfying the same properties. In particular this shows that $\Pi^e: L^p(SM_e)\to L^{p'}(SM_e)$ is bounded.
We summarize the discussion by the following:
\begin{prop}\label{Pi} 
Assume that \eqref{condescape} holds for $p\in (2,\infty)$. Then we obtain \\
1) the operator $\Pi^e$ is bounded and self-adjoint as a map 
\[ \Pi^e: L^p(SM_e)\to L^{p'}(SM_e),  \quad 1/p+1/p'=1,\]
it satisfies for each $f\in L^p(SM_e)$
\begin{equation}\label{Xpi} 
X\Pi^ef =0
\end{equation} 
in the distribution sense and $\Pi^ef$ is given, outside a set of measure $0$, by the formula  
\begin{equation}\label{formulePi} 
\Pi^e f(x,v)= \int_{-\infty}^\infty f(\varphi_t(x,v))dt.
\end{equation}
2) If the trapped set $K$ is hyperbolic, the operator $\Pi^e: H_0^s(SM_e)\to H^{-s}(SM_e)$
is bounded for all $s\in(0,1/2)$. For each $f\in C_c^\infty(SM_e^\circ)$, the expression \eqref{formulePi} holds in $SM_e\setminus (\Gamma_+\cup \Gamma_-)$, we have 
 ${\rm WF}(\Pi^e f)\in E_-^*\cup E_+^*$, the restriction $\omega_\pm:=(\Pi^e f)|_{\pl_\pm SM}$ makes sense as a distribution, is in $L^2(\pl_\pm SM,d\mu_\nu)$ with wave-front set
\begin{equation}\label{wboundary} 
{\rm WF}(\omega_\pm)\subset E_{\pl,\pm}^*,
\end{equation}
and $\mc{S}_g\omega_-=\omega_+$ where $\mc{S}_g$ is the scattering map \eqref{scmap}. Finally 
$\omega_{\pm}\in H^s(\pl_\pm SM)$ 
for all $s<-Q/2\nu_{\max}$ with $\nu_{\max}$ defined in \eqref{nu}.
\end{prop}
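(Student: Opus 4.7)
The strategy is to transfer all information about the resolvents $R_\pm(0)$ to $\Pi^e$ via the identity $\Pi^e = R_+(0) - R_-(0)$ from Lemma~\ref{PivsR}, combined with the factorization $\Pi^e = (I^e)^* I^e$ coming from Lemma~\ref{I^*}. All three parts then reduce to invoking the right tool from Propositions~\ref{boundedL2}, \ref{DyGu}, \ref{Rpm0} and checking the scattering compatibility by hand.

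For part 1), the boundedness $L^p(SM_e) \to L^{p'}(SM_e)$ and self-adjointness follow immediately from $\Pi^e = (I^e)^* I^e$ together with Lemma~\ref{boundedI} (which provides $I^e : L^p \to L^2$) and Lemma~\ref{I^*} identifying the adjoint $(I^e)^* = \mc{E}_-^e$ as a map $L^2 \to L^{p'}$. The identity $X\Pi^e f = 0$ in the distribution sense is obtained from $-XR_\pm(0)f = f$, proved for $f \in C_c^\infty(SM_e^\circ)$ inside Proposition~\ref{boundedL2} and extended to $f \in L^p(SM_e)$ by density and the $L^p$-continuity granted by \eqref{condescape}. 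Formula \eqref{formulePi} comes from adding the integral representations of $R_+(0)f$ and $-R_-(0)f$, which converge for a.e.\ $(x,v) \notin \Gamma_-\cup\Gamma_+$; the latter set has measure zero by \eqref{VolK} and the decay of $V(t)$.

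For part 2), under hyperbolicity of $K$ Proposition~\ref{Rpm0} part 1 gives the boundedness $R_\pm(0) : H_0^s \to H^{-s}$ for $s \in (0, 1/2)$, hence that of $\Pi^e$. For $f \in C_c^\infty(SM_e^\circ)$, formula \eqref{formulePi} outside $\Gamma_-\cup\Gamma_+$ is precisely \eqref{R0f}, and the inclusion $\WF(\Pi^ef) \subset E_-^*\cup E_+^*$ is the sum of the inclusions $\WF(R_\pm(0)f) \subset E_\mp^*$ from Proposition~\ref{Rpm0} part 3. Since $E_\pm^*$ sits inside the characteristic set $\{\xi(X)=0\}$ while $X$ is transverse to $\pl SM$ off $\pl_0 SM$, one has $E_\pm^* \cap N^*(\pl SM) = 0$, so the pullback rule of \cite[Th.~8.2.4]{Ho} produces a well-defined distribution $\omega_\pm$ on $\pl_\pm SM$ with $\WF(\omega_\pm) \subset E_{\pl,\pm}^*$. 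The $L^2(\pl_\pm SM, d\mu_\nu)$ bound comes from recognizing $\omega_-$ as (a restriction of) the $X$-ray transform $I^ef$, since $R_-(0)f$ vanishes near $\pl_-SM$ on the part of $\supp(f)$ lying in $SM^\circ$ by Proposition~\ref{boundedL2} part 3, the bounded-time contribution from $\supp(f)\cap(SM_e\setminus SM)$ being harmless; then Lemma~\ref{boundedI} applies, and the same argument time-reversed handles $\omega_+$.

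The scattering identity $\mc{S}_g\omega_- = \omega_+$ is the main geometric content. For smooth $f$, \eqref{formulePi} is manifestly invariant under translating the initial point along the flow, so for $y \in \pl_-SM \setminus \Gamma_-$ one has $\omega_+(S_g(y)) = \omega_-(y)$, which is exactly $\omega_+ = \mc{S}_g\omega_-$ by definition \eqref{scmap}; for general $f \in C_c^\infty(SM_e^\circ)$, one approximates and passes to the limit using the uniform wavefront control (keeping $\omega_-^{(n)} \to \omega_-$ in $L^2$ with wavefront in $E_{\pl,-}^*$) together with the unitarity of $\mc{S}_g$ from Lemma~\ref{unitary}. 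Part 3) is then immediate from Proposition~\ref{Rpm0} part 4 applied to $R_\pm(0)f$ separately. The main obstacle I anticipate is reconciling the distributional boundary restriction of $\Pi^ef$ with its $L^2$ norm: $\Pi^ef$ has singularities along $E_-^*\cup E_+^*$ which could a priori blow up at $\pl SM$, and it is the strict convexity of $\pl M$, entering through $E_\pm^* \cap N^*(\pl SM) = 0$ together with the $X$-ray boundedness of Lemma~\ref{boundedI}, that rescues the argument.
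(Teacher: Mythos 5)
Your proposal is correct and follows essentially the same route as the paper's own proof: factorization $\Pi^e = (I^e)^* I^e$ and Lemma~\ref{boundedI} for part~1), the identity $\Pi^e = R_+(0) - R_-(0)$ together with Proposition~\ref{Rpm0} for the Sobolev bound, the wavefront containment, the distributional boundary restriction, and finally the flow-invariance of \eqref{formulePi} for $\mathcal{S}_g\omega_-=\omega_+$ with Lemma~\ref{unitary} supplying the $L^2$ control of $\omega_+$. Your citation of Proposition~\ref{Rpm0} part~4 for part~3) is in fact cleaner than the paper's pointer; and the approximation step you insert for the scattering identity is unnecessary, since $f$ is already smooth and compactly supported, so the pointwise identity $\omega_+\circ S_g = \omega_-$ holds off the measure-zero sets $\Gamma_\pm$ directly.
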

\begin{proof} The boundedness and the self-adjoint property have already been proved. The 
property \eqref{Xpi} is clear from the properties of $R_\pm(0)$ given in 1) of Proposition \ref{Rpm0}.
The expression of $\Pi^e f$
follows from \eqref{R0f} (and the proof of Proposition \ref{boundedL2} for the extension to $L^p$ functions).
The wavefront set property of $w$ follows from \eqref{WFsetupm}, and the wavefront set and regularity properties \eqref{wboundary} of the restrictions $\omega_\pm$ are consequences of Proposition \ref{Rpm0}. The fact that $\omega_-\in L^2(\pl_-SM,d\mu_\nu)$ comes from Lemma \ref{boundedI}.
Finally, $\mc{S}_g\omega_-=\omega_+$ since, by \eqref{formulePi},
$\omega_+=\omega_-\circ S_g^{-1}$ on $\pl_+SM\setminus\Gamma_+$, this also implies that 
$\omega_+\in L^2(\pl SM,d\mu_\nu)$ by Lemma \ref{unitary}.  The last statement in the Proposition is a consequence of 3) in Proposition \ref{boundedL2}.  
\end{proof}
Next, we describe the kernel of $\Pi^e$ restricted to smooth functions supported in $SM$.
\begin{prop}\label{kernelPi}
Assume that $K$ is hyperbolic. Let $f\in C^\infty(SM)$ extended by $0$ in $SM_e\setminus SM$, if 
$\Pi^e f=0$ in $SM$, there exists $u\in C^\infty(SM)$ vanishing at $\pl SM$ such that $Xu=f$. If $f$ vanishes to infinite 
order at $\pl M$, then $u$ also does so.
\end{prop}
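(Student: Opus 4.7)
The plan is to take $u := -R_+(0)\tilde f$, where $\tilde f$ denotes the extension of $f$ by zero to $SM_e$. By the hypothesis $\Pi^e f = R_+(0)\tilde f - R_-(0)\tilde f = 0$ in $SM$, this candidate also agrees with $-R_-(0)\tilde f$ inside $SM$, and the identity $-XR_\pm(0)\tilde f = \tilde f$ from Proposition \ref{Rpm0} gives $Xu = f$. The boundary vanishing follows from the integral representation \eqref{R0f}: strict convexity forces $\ell_+(y) = 0$ on $\pl_+SM$, so $R_+(0)\tilde f$ vanishes there; symmetrically $R_-(0)\tilde f$ vanishes on $\pl_-SM$; and by continuity $u$ then vanishes on $\pl_0 SM$ as well.

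The core step, and what I expect to be the main obstacle, is proving $u \in C^\infty(SM)$; this is where hyperbolicity of $K$ must enter decisively. By Proposition \ref{Rpm0}(3) one has $\WF(R_+(0)\tilde f) \subset E_-^*$ and $\WF(R_-(0)\tilde f) \subset E_+^*$. Since the two distributions coincide on $SM^\circ$, the common value $u$ satisfies
\[
\WF(u|_{SM^\circ}) \subset E_-^* \cap E_+^*.
\]
The bundles $E_\pm^*$ have base points in $\Gamma_\pm$, so this intersection is supported above $\Gamma_- \cap \Gamma_+ = K$. On $K$ the hyperbolic splitting $T^*_K(SM) = E_0^* \oplus E_s^* \oplus E_u^*$ identifies $E_-^*|_K$ with $E_s^*$ and $E_+^*|_K$ with $E_u^*$, so the intersection reduces to the zero section and carries no wavefront; hence $\WF(u|_{SM^\circ}) = \emptyset$ and $u$ is smooth in the interior.

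To extend smoothness up to $\pl SM$, I invoke that $K \subset SM^\circ$ is compact and that, by strict convexity, $\Gamma_- \cap \pl_+ SM = \emptyset$, $\Gamma_+ \cap \pl_- SM = \emptyset$, and $\Gamma_\pm \cap \pl_0 SM = \emptyset$. Then in a neighborhood of $\pl_+ SM$ in $SM_e$, $R_+(0)\tilde f$ is a smooth finite integral along the orbit, hence smooth up to the boundary; similarly for $R_-(0)\tilde f$ near $\pl_- SM \cup \pl_0 SM$. For the last claim, if $f$ vanishes to infinite order at $\pl M$ then $\tilde f \in C_c^\infty(SM_e^\circ)$ has support in $SM$, and \eqref{Rpm0=0} yields infinite order vanishing of $R_\pm(0)\tilde f$ at $\pl_\pm SM$; combined with the smoothness of $u$ established above and the density of $\pl_+SM \cup \pl_- SM$ in $\pl SM$, this propagates infinite-order vanishing of $u$ to all of $\pl SM$.
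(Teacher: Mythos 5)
Your overall strategy (set $u = -R_+(0)\tilde f$, use the wavefront bounds on the resolvents plus $E_-^* \cap E_+^* = \{0\}$ over $K$ for interior smoothness, then use strict convexity for the boundary) is the same as the paper's, and the interior part is correct in spirit. However there are two issues, one small and one serious.

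The small one: Proposition \ref{Rpm0}(3) requires $f \in C_c^\infty(SM_e^\circ)$, whereas $\tilde f$ is only in $H_0^s(SM_e)$ for $s<1/2$, with $\WF(\tilde f) \subset N^*\pl SM$. You must instead run the wavefront composition law against the kernel bound \eqref{WFRpm}. Doing so produces, besides $N^*\pl SM$ and $E_\pm^*$, an extra term $B_\mp$ obtained by flowing the conormal directions at $\pl_0 SM$ forward/backward; strict convexity then shows that $B_\mp$ has base points outside $SM^\circ$, and only then do you get $\WF(R_\mp(0)\tilde f)\cap T^*(SM^\circ)\subset E_\pm^*$. The conclusion you want is correct, but the extra term must be noticed and dismissed.

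The serious one is the smoothness of $u$ up to $\pl_0 SM$. Your claim that ``$R_+(0)\tilde f$ is a smooth finite integral near $\pl_+SM$, and similarly $R_-(0)\tilde f$ near $\pl_-SM\cup\pl_0SM$'' is false at $\pl_0 SM$: the escape times $\ell_\pm$ have a square-root singularity there (strict convexity forces $\ell_\pm\sim\sqrt{\rho}$), so each of $R_\pm(0)\tilde f$ restricted to $SM$ is in general only $C^{1/2}$ at $\pl_0 SM$. This is precisely where the hypothesis $\Pi^ef=0$ must enter in a nontrivial cancellation. The paper decomposes $f=f_{\rm ev}+f_{\rm od}$ with respect to the involution $(x,v)\mapsto(x,-v)$, uses the parity identities \eqref{evenodd} together with $\Pi^e f=0$ to obtain $(R_\pm(0)f_{\rm ev})_{\rm ev}=\pm\tfrac{1}{2}\Pi^e f_{\rm ev}=0$ and $(R_\pm(0)f_{\rm od})_{\rm od}=0$, and then appeals to \cite[Lemma 2.3]{SaUh} to show the remaining parities $(R_\pm(0)f_{\rm ev})_{\rm od}$ and $(R_\pm(0)f_{\rm od})_{\rm ev}$ are smooth near $\pl_0SM$. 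Without this cancellation-by-parity step, $u\in C^\infty(SM)$ is not established. (Your argument for infinite-order vanishing in the special case $f$ flat at $\pl M$ is fine, but it presupposes the smoothness you have not proved in the general case.)
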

\begin{proof}
First, the extension of $f$ by $0$ can be viewed as an element in $H_0^s(SM_e)$ for $s<1/2$ with ${\rm WF}(f)\subset 
N^*(\pl SM)$ 
where $N^*(\pl SM)$ is the conormal bundle of $\pl SM$ in $SM_e$. By the composition law of wave-front set in 
\cite[Theorem 8.2.13]{Ho} and \eqref{WFRpm}, we deduce that 
\[\begin{gathered} 
{\rm WF}(R_\mp(0)f)\subset N^*\pl SM\cup E_\pm^*\cup B_\mp \\
B_\pm:= \cup_{t\geq 0}\{(\varphi_{\pm t}(y),(d\varphi_{\pm t}(y)^{-1})^T\xi)
\in T^*SM^\circ_e; y\in \pl_0SM, \xi\in N^*(\pl SM)\}
\end{gathered}\]
Clearly, by strict convexity, $B_\pm$ projects down to $M_e\setminus M^\circ$.
Now, the function $\ell_{\pm}$ is smooth in $SM\setminus (\pl_0SM\cup \Gamma_-\cup \Gamma_+)$
and from the expression \eqref{R0f} and the smoothness of $f$, we then get that 
$R_\mp(0)f$ is smooth in $SM\setminus (\pl_0SM \cup \Gamma_\pm)$ and $(R_\pm(0)f)|_{\pl_\pm SM}=0$.
To analyze the regularity at $\pl_0SM$, we decompose $f=f_{\rm ev}+f_{\rm od}$,  
we get by \eqref{evenodd} that $(R_\pm(0)f_{\rm ev})_{\rm ev}=\pm \demi \Pi^ef=0$ 
and similarly $(R_\pm(0)f_{\rm od})_{\rm od}=0$. Now  the argument of 
\cite[Lemma 2.3]{SaUh} shows that $(R_\pm(0)f_{\rm ev})_{\rm od}|_{SM}$ 
and $(R_\pm(0)f_{\rm od})_{\rm ev}|_{SM}$ are both smooth near $\pl_0SM$, which implies 
that $R_\pm(0)f$ is smooth near $\pl_0SM$ in $SM$.
Since $R_+(0)f=R_-(0)f$ if $\Pi^ef=0$, we deduce 
that $(R_\pm(0)f)|_{SM}\in C^\infty(SM\setminus K)$ and $(R_\pm(0)f)|_{\pl SM}=0$. From the wavefront set description above and the fact that 
$E_+^*\cap E_-^*=\{0\}$ over $K$, we conclude that $(R_\mp(0)f)|_{SM}\in C^\infty(SM)$. It just suffices to set 
$u=R_+(0)f$ to conclude the proof.
The fact that $f$ vanishes to all order at $\pl SM$ implies that $R_\pm(0)f$ 
vanishes to all order at $\pl_\pm SM$ by \eqref{Rpm0=0}, and thus $u$ vanishes to all order at $\pl SM$.
\end{proof}

\subsection{The operators $I_0$ and $\Pi_0$}

Here we deal with the analysis of X-ray transform acting on functions on $M$. The projection  
$\pi_0:SM_e\to M_e$ on the base induces a pull-back map 
\[ \pi_0^* : C_c^{\infty}(M_e^\circ)\to C_c^{\infty}(SM^\circ_e), \quad \pi_0^*f:=f\circ \pi_0\]
and a push-forward map ${\pi_0}_*$ defined by duality
 \begin{equation}\label{pushforward}
 {\pi_0}_*: C^{-\infty}(SM^\circ_e)\to C^{-\infty}(M^\circ_e), \quad \cjg {\pi_0}_*u,f\cjd:= 
\cjg u,\pi_0^*f\cjd.\end{equation}
Push-forward corresponds to integration in the fibers of $SM_e$ when acting on smooth functions.
The pull-back by $\pi_0$ also makes sense on $M$ and gives a bounded operator 
$\pi_0^*: L^p(M)\to L^p(SM)$ for all $p\in(1,\infty)$.
When \eqref{condescape} holds for some $p\in (2,\infty)$, we
define the X-ray transform on functions as the bounded operator (see Lemma \eqref{boundedI}) 
\begin{equation}\label{defI0}
I_0 := I\, \pi_0^*: L^p(M)\to L^2(\pl_-SM, d\mu_\nu).
\end{equation} 
The adjoint $I_0^*:L^2(\pl_-SM, d\mu_\nu)\to L^{p'}(M)$ is bounded if 
$1/p'+1/p=1$ and it is given by $I_0^*={\pi_0}_*I^*$. The operator $\Pi_0$ is simply defined as the bounded
self-adjoint operator for $p\in (2,\infty)$ and $1/p'+1/p=1$ 
\begin{equation}\label{defofPi0}
\Pi_0:= I_0^*I_0= {\pi_0}_*\Pi\,\pi_0^* : L^p(M)\to L^{p'}(M).
\end{equation}
Similarly, we define the self-adjoint bounded operator 
\begin{equation}\label{defofPi0e}
\Pi_0^e:= {\pi_0}_* \Pi^e \pi_0^*= (I^e \pi_0^*)^*I^e\pi_0^*:  L^p(M_e)\to L^{p'}(M_e).
\end{equation}
We first want to mention some boundedness result which holds in a general setting (no condition on conjugate points are required) and says that $\Pi_0$ is always regularizing if $V(t)$ decays sufficiently.
\begin{lemm}\label{boundednessI0}
Assume that \eqref{condescape} holds for $p>2$, then $I_0^*$ and $I_0$ are bounded as 
maps
\[ I_0^*: L^2(\pl_-SM,d\mu_\nu)\to H_{\rm loc}^{-\frac{n-1}{2}+\frac{n}{p}}(M^\circ), \quad 
I_0: H_{\rm comp}^{\frac{n-1}{2}-\frac{n}{p}}(M^\circ)\to L^2(\pl_-SM,d\mu_\nu).\]
and the same property holds for $I_0^e$ with $M_e$ replacing $M$. 
\end{lemm}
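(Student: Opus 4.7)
The plan is to prove boundedness of the adjoint $I_0^*$ and deduce the claim for $I_0$ by duality, using $(H^s_{\rm comp}(M^\circ))^* = H^{-s}_{\rm loc}(M^\circ)$. First I would factor
\[
I_0^* = {\pi_0}_* \circ I^* = {\pi_0}_* \circ \mc{E}_-,
\]
where the identification $I^* = \mc{E}_-$ is Lemma \ref{I^*}. By Lemma \ref{boundedI} and duality, $\mc{E}_- : L^2(\pl_- SM, d\mu_\nu) \to L^{p'}(SM)$ is bounded for $1/p+1/p' = 1$; moreover $X \mc{E}_- g = 0$ holds in the distribution sense (extending Lemma \ref{BVPL2} to $L^2$ data by density, noting that the argument of Lemma \ref{BVPL2} gives $X\mc{E}_-=0$ in the interior $SM^\circ$).

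Next, set $w := \mc{E}_- g$. The naive Fubini-plus-Sobolev estimate ${\pi_0}_*: L^{p'}(SM) \to L^{p'}(M) \hookrightarrow H^{-n/2+n/p}_{\rm loc}(M)$ would give boundedness into $H^{-n/2+n/p}_{\rm loc}(M^\circ)$, which falls short of the claim by exactly half a derivative. This missing $\demi$ derivative is supplied by the G\'erard--Golse averaging lemma \cite[Theorem~2.1]{GeGo}, applied just as in the proof of Proposition \ref{boundval}(2). Since the geodesic flow satisfies the required transversality assumption and $Xw = 0 \in L^{p'}$, the fiber integral ${\pi_0}_* w$ (tested against any cutoff in $M^\circ$) gains half a derivative in the base relative to the regularity of $w$ on $SM$, so ${\pi_0}_* w \in H^{-(n-1)/2 + n/p}_{\rm loc}(M^\circ)$. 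Taking adjoints yields $I_0 : H^{(n-1)/2 - n/p}_{\rm comp}(M^\circ) \to L^2(\pl_- SM, d\mu_\nu)$.

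The same argument carries over for $I_0^e$ on $M_e$ without modification, since Lemma \ref{boundedI} holds verbatim for $I^e$ on $SM_e$.

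The main obstacle I expect is rigorously applying the averaging lemma in the $L^{p'}$ framework (rather than the $L^2$/$H^s$ framework used in Proposition \ref{boundval}(2), where $\mc{E}_-\omega_-$ was known to sit in $H^s(SM_e)$). One must track carefully the precise Besov/Sobolev gain in $L^{p'}$ for flow-invariant functions and combine it with the Sobolev embedding on the base of dimension $n$ to land at exactly $-(n-1)/2 + n/p$; localization in $M^\circ$ (rather than up to $\pl M$) circumvents the degeneracy of the flow at the glancing set $\pl_0 SM$.
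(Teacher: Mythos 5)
Your proposal is essentially the same as the paper's proof: both factor $I_0^* = {\pi_0}_*\circ\mc{E}_-$, use Lemma \ref{boundedI} dualized to get $\mc{E}_-:L^2(\pl_-SM,d\mu_\nu)\to L^{p'}(SM)$ with $X\mc{E}_-=0$, and then invoke the G\'erard--Golse averaging lemma \cite[Th.~2.1]{GeGo} to pick up the extra $\demi$ derivative over the naive Fubini-plus-Sobolev exponent $-n/2+n/p$, before dualizing to obtain the bound on $I_0$. You also correctly flag the one genuinely delicate point (justifying the half-derivative gain of the averaging lemma when $w$ is only known a priori to be in $L^{p'}$ rather than in an $L^2$-Sobolev space), which the paper's one-sentence proof does not spell out either.
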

\begin{proof} It suffices to prove the boundedness for $I_0^*$. By Sobolev embedding,  
$I^*: L^2(\pl_-SM,d\mu_\nu)\to H_{\rm loc}^{-\frac{n}{2}+\frac{n}{p}}(M^\circ)$ is bounded,
and using Lemma \ref{I^*}, we have $XI^*=0$ as operators. Then applying 
\cite[Theorem 2.1]{GeGo} as in the proof of Proposition \ref{Rpm0}, we gain $1/2$ derivative in the Sobolev 
scale by applying ${\pi_0}_*$, this ends the proof.
\end{proof}
If $V(t)=\mc{O}(t^{-\infty})$, the Sobolev exponents are $H^{-1/2-\eps}_{\rm comp}(M^\circ)$ and 
$H^{1/2+\eps}_{\rm loc}(M^\circ)$ for all $\eps>0$, and if $K=\emptyset$ we get $I_0^*I_0: H^{-1/2}_{\rm comp}(M^\circ)\to H^{1/2}_{\rm loc}(M^\circ)$.
 Following the method of \cite{Gu}, we prove 
\begin{prop}\label{PsidoPi0}
Assume that the geodesic flow on $SM$ has no conjugate points and that the trapped set $K$ is hyperbolic.
The operator $\Pi_0^e={\pi_0}_* \Pi^e \pi_0^*$ is an elliptic pseudo-differential operator of order $-1$ in $M^\circ_e$, with principal symbol
$\sigma(\Pi_0^e)(x,\xi)=C_n|\xi|^{-1}_g$ for some constant $C_n\not=0$ depending only on $n$. 
\end{prop}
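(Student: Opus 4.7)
The plan is to analyze the Schwartz kernel $K_{\Pi_0^e}$ of $\Pi_0^e$ on $M_e^\circ\x M_e^\circ$: first show that $\WF(K_{\Pi_0^e})\subset N^*\Delta_{M_e}$ (so $\Pi_0^e$ is a $\Psi$DO in $M_e^\circ$), then compute the leading diagonal singularity to identify its order and principal symbol.

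Since $\Pi^e=R_+(0)-R_-(0)$, Proposition \ref{DyGu} part 3) gives
\[
\WF(K_{\Pi^e})\subset N^*\Delta(SM_e^\circ\x SM_e^\circ)\cup \Omega_+\cup \Omega_-\cup (E_+^*\x E_-^*)\cup(E_-^*\x E_+^*).
\]
The pushforward rule for wavefront sets under $\pi_0\x\pi_0$ requires any $(z_1,\eta_1,z_2,\eta_2)\in\WF(K_{\Pi_0^e})$ to come from $((x_1,\tilde\xi_1),(x_2,\tilde\xi_2))\in\WF(K_{\Pi^e})$ with $\pi_0(x_i)=z_i$ and lifts $\tilde\xi_i\in V_{x_i}^\perp$ (annihilator of the vertical bundle). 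The $N^*\Delta$ piece trivially yields only $N^*\Delta_{M_e}$. For an $\Omega_\pm$ element with parameter $t\neq 0$, writing $x_1=\varphi_{\pm t}(x_2)$, $\tilde\xi_2(X)=0$ and $\tilde\xi_1=-(d\varphi_{\pm t}^{-1})^T\tilde\xi_2$, the vertical conditions say that $\tilde\xi_2$ annihilates $V_{x_2}+\rr X(x_2)+d\varphi_{\mp t}V_{x_1}$. Klingenberg's identity \eqref{klingenb} gives $V_{x_2}\cap d\varphi_{\mp t}V_{x_1}=\{0\}$ from no conjugate points, and the Jacobi field description of the flow differential on vertical vectors shows $d\pi_0(d\varphi_{\mp t}V_{x_1})=v(x_2)^\perp$; hence $X(x_2)\notin V_{x_2}+d\varphi_{\mp t}V_{x_1}$, the triple sum fills $T_{x_2}SM_e$, and $\tilde\xi_2=0$. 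For $E_\pm^*\x E_\mp^*$, the decomposition $T_{x_i}SM_e=E_\pm(x_i)\oplus V_{x_i}\oplus\rr X$ (valid since $E_\pm\cap V=\{0\}$ by \eqref{klingenb} and $E_\pm\cap\rr X=\{0\}$ by hyperbolicity) gives $E_\pm^*\cap V^\perp=\{0\}$, again killing the contribution. Therefore $\WF(K_{\Pi_0^e})\subset N^*\Delta_{M_e}$.

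To extract the order and principal symbol, I would use formula \eqref{formulePi} to write the kernel informally as
\[
K_{\Pi_0^e}(x,y)=\int_{S_xM_e}\int_\rr \delta(y-\exp_x(tv))\,dt\,dv.
\]
For $|t|$ small, the map $(t,v)\mapsto\exp_x(tv)$ is a diffeomorphism by no conjugate points, with Jacobian $t^{n-1}(1+O(t^2))$ in polar coordinates. With a cutoff $\chi$ supported near $t=0$ and combining both signs of $t$, the short-range part of the kernel is
\[
K_{\rm short}(x,y)=\frac{2\,\chi(d(x,y))}{d(x,y)^{n-1}}\,a(x,y),\qquad a\in C^\infty,\ a(x,x)=1,
\]
while the long-range part is smooth on $M_e^\circ\x M_e^\circ$ by the wavefront analysis above, hence smoothing. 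Since the Fourier transform of $|y|^{-(n-1)}$ on $\rr^n$ equals $c_n|\xi|^{-1}$ for some $c_n\neq 0$, $\Pi_0^e$ is a classical elliptic $\Psi$DO of order $-1$ with principal symbol $\sigma(\Pi_0^e)(x,\xi)=C_n|\xi|_g^{-1}$.

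The main obstacle is the wavefront calculation for the $\Omega_\pm$ piece, which reduces via the Jacobi field description of $d\varphi_{\mp t}V_{x_1}$ to Klingenberg's characterization \eqref{klingenb} of ``no conjugate points''; once this transversality is in hand, the rest is standard Riesz-potential analysis near the diagonal.
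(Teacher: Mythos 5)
Your proposal is correct and is essentially the paper's proof: both rely on (i) the wavefront description of $R_\pm(0)$ from Proposition~\ref{DyGu}, pushed forward under $\pi_0\x\pi_0$, with the absence of conjugate points (via the transversality $E_\pm\cap V=\{0\}$ and $d\varphi_t V\cap V=\{0\}$ for $t\neq 0$) killing the $\Omega_\pm$ and $E_\pm^*\x E_\mp^*$ contributions, and (ii) an explicit near-diagonal Riesz-potential computation giving order $-1$ and symbol $C_n|\xi|_g^{-1}$.

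The only (cosmetic) divergence is in how the diagonal singularity is isolated: the paper splits $R_+(0)=\int_0^\eps e^{tX}\,dt + e^{\eps X}R_+(0)$ so that the first term $L_\eps$ can be analyzed explicitly (deferring the symbol to \cite[Lemma 3.1]{PeUh}) while the second has pushed-forward wavefront set supported on $\Delta_\eps$, and then lets $\eps$ vary; you instead compute $\WF(K_{\Pi_0^e})$ once and for all and read off the symbol from the short-range asymptotics of the explicit kernel $\int_{S_xM}\int_{\rr}\delta_{\exp_x(tv)}\,dt\,dv$. Your Jacobi-field verification that $d\pi_0(d\varphi_{\mp t}V_{x_1})=v(x_2)^\perp$ and hence $V_{x_2}\oplus\rr X\oplus d\varphi_{\mp t}V_{x_1}=T_{x_2}SM_e$ makes explicit what the paper delegates to \cite[Th.~3.1]{Gu}; both arguments are sound. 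One small point worth being aware of: knowing $\WF(K_{\Pi_0^e})\subset N^*\Delta_{M_e}$ by itself does not make $\Pi_0^e$ a $\Psi$DO (conormality is needed), so it is really the explicit near-diagonal formula that upgrades the wavefront information to the full pseudodifferential statement — which is exactly what your short-range analysis supplies, as in the paper's treatment of $L_\eps$.
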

\begin{proof} First we choose the extension $(M_e,g)$ so that the geodesic flow on $M_e$ 
has non-conjugate points. 
Once we know the wavefront set of the Schwartz kernels of the resolvent $R_\pm(0)$, 
the proof is very similar to Theorem 3.1 and Theorem 3.4 in \cite{Gu}, therefore we do not write all details but refer to that paper where this is done carefully for Anosov flows. It suffices to analyze $\chi \Pi^e_0\chi'$ where $\chi,\chi'\in C_c^\infty(M_e^\circ)$ are arbitrary functions. Its Schwartz kernel 
is given by $\chi(x)\chi'(x')((\pi_0\otimes\pi_0)_*\Pi^e)(x,x')$ where $\Pi^e=R_+(0)-R_-(0)$ is identified with its Schwartz kernel. 
We write for $\eps\geq 0$ small
\[R_+(0)=\int_0^\eps e^{tX}dt + e^{\eps X}R_+(0)\]
where $e^{tX}$ is the pull-back by the flow at time $t$. Using \eqref{WFRpm} and the computation of ${\rm WF}(e^{\eps X})$ which follows from \cite[Theorem 8.2.4]{Ho},  the composition law of wavefront set \cite[Theorem 8.2.14]{Ho} can be used like in the proof of \cite[Theorem 3.1]{Gu}: we obtain  
\[\begin{split}
{\rm WF}(\pi_0^*(\chi) e^{\eps X}R_+(0)\pi_0^*(\chi'))\subset & \Big( \{(\varphi_t(y),(d\varphi_t(y)^{-1})^{T}\eta,y,-\eta);  \,\, t\leq -\eps,\,\, \eta(X(y))=0\}\\
&  \cup \{(\varphi_{-\eps}(y),\eta ,y,-d\varphi_{-\eps}(y)^T\eta);\,\, (y,\eta)\in T^*(SM)\setminus\{0\}\}\\
& \cup (E_-^*\x E_+^*) \Big) \cap \{ (y,\eta,y',\eta'); (\pi_0(y),\pi_0(y'))\in U\x U'\}.
\end{split}\]
where $U:=\supp(\chi)$ and $U'=\supp(\chi')$; here the wave-front set of an operator means the wave-front set of the Schwartz kernel of the operator. By applying the rule of pushforward of wave-front sets (given for example in \cite[Proposition 11.3.3.]{FrJo}), 
we get 
${\rm WF}({\pi_0}_*e^{\eps X}R_0\pi_0^*)\subset S_1 \cup S_2\cup S_3$ where 
\[\begin{split}
S_1:= \{ & (\pi_0(y),\xi,\pi_0(y'),\xi')\in T_0^*(U\x U); \,  (y,d\pi_0(y)^T\xi,y',d\pi_0(y')^T\xi')\in E_-^*\x E_+^*\}\\
S_2:=\{ & (\pi_0(\varphi_t(y)),\xi,\pi_0(y),\xi')\in T_0^*(U\x U); \,\, \exists \, t\leq -\eps,  \exists \, \eta, \eta(X(y))=0 , \\
            & \,\, d\pi_0(y)^T\xi'=-\eta, \,\, d\pi_0(\varphi_t(y))^T\xi=(d\varphi_t(y)^{-1})^{T}\eta\}\\
S_3:=\{( & \pi_0(\varphi_{-\eps}(y)),\xi,\pi_0(y),\xi')\in T_0^*(U\x U);\,\,  (d(\pi_0\circ \varphi_{-\eps})(y))^T\xi=-d\pi_0(y)^T\xi' \}
\end{split}\]
if we set $T_0^*(U\x U):=T^*(U\x U)\setminus \{0\}$. We let $V=\ker d\pi_0\subset T(SM_e)$ be the vertical bundle, and $H$ be the horizontal bundle (cf. \cite[Chapter 1.3]{Pa}), and $V^*,H^*\subset T^*(SM_e)$ their dual defined by $H^*(V)=0$ and $V^*(H)=0$ ($V^*$ is dual to $V$ and $H^*$ is dual to $H$ for the Sasaki metric).
By \eqref{klingenb}, the absence of conjugate points for the flow in $M_e$ implies that 
$T(SM_e)=\rr X\oplus V\oplus E_\pm$ at $\Gamma_\pm$ and thus $E_\pm^*\cap H^*=\{0\}$. This implies that 
$S_1=\emptyset$. Similarly, it is direct to see that $S_2=\emptyset$ is equivalent to the absence of conjugate points for the flow (see the proof of \cite[Theorem 3.1]{Gu} for details). 
The last part is $S_3$. The proof is exactly the same as in \cite[Theorem 3.1]{Gu} thus we do not repeat it but simply summarize the argument: the projection of $S_3$ on $M_e^\circ$
is contained in $\Delta_\eps(M_e^\circ\x M_e^\circ ):=\{(x,x')\in M_e^\circ\x M_e^\circ; d_g(x,x')=\eps\}$ where $d_g$ is the Riemannian distance. The operator 
$L_\eps=\int_0^\eps {\pi_0}_* e^{tX}\pi_0^*dt$ is explicit for small $\eps>0$ and given by 
\[ L_\eps f(x):= \int_{0}^\eps\int_{S_xM_e}f(\varphi_t(x,v))dvdt,\]
This operator has singular support $\Delta_\eps(M_e^\circ\x M_e^\circ)\cup \Delta_0(M_e^\circ\x M_e^\circ )$ and 
thus, $\eps>0$ being chosen arbitrary (but small), the kernel of $\Pi_0$ has singular support on the diagonal 
$\Delta_0(M_e^\circ\x M_e^\circ)$. Now the kernel $\psi(x,x')L_\eps(x,x')$ is that of an elliptic pseudo-differential operator of order $-1$ if $\psi\in C_c^\infty(M_e^\circ\x M_e^\circ)$ is supported close enough to the diagonal $\{x=x'\}$ and equal to $1$ in a neighborhood of the diagonal: the analysis is purely local and exactly the same as in \cite[Lemma 3.1]{PeUh},
which also shows that the symbol of this $\Psi$DO is $C_n|\xi|^{-1}_g$ for some $C_n>0$. It is direct to see 
(from $R_+(0)^*=-R_-(0)$) that $\Pi_0^e=2{\pi_0}_*R_+(0)\pi_0^*$, and we have then proved the claim. 
\end{proof}
Since the Schwartz kernel of $\Pi^e_0$ on $M^\circ$ is the restriction of the kernel of 
$\Pi^e$ to $M^\circ\x M^\circ$, we deduce that in the case of hyperbolic trapped set and no conjugate points, Lemma \ref{boundednessI0} gives that $\Pi^e_0:H^{-1/2}_{\rm comp}(M^\circ)\to H^{1/2}_{\rm loc}(M^\circ)$ and the $TT^*$ argument shows that for any compact domain $\mc{O}\subset M^\circ$ with non-empty interior and smooth boundary, we have
\begin{equation}\label{TT*}
I_0:H^{-1/2}(\mc{O})\to L^2(\pl_-SM;d\mu_\nu), \quad I_0^*: L^2(\pl_-SM;d\mu_\nu)\to H^{1/2}(\mc{O}).
\end{equation}
We can use Proposition \ref{PsidoPi0} to prove the regularity property on elements in $\ker I_0$. 
\begin{corr}\label{regulkerI}
Assume that the trapped set $K$ is hyperbolic, the metric has no conjugate points. 
Let $f_0\in L^p(M)+H^{-1/2}_{\rm comp}(M^\circ)$ for some $p>2$ satisfying $I_0f_0=0$. 
Then $f_0\in C^\infty(M)$ and $f_0$ vanishes to all order at $\pl M$.
\end{corr}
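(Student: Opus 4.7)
The plan is to transfer the vanishing of $I_0 f_0$ from $M$ to the enlargement $M_e$ and then invoke the ellipticity of $\Pi_0^e$ from Proposition \ref{PsidoPi0} to conclude smoothness. To begin, write $f_0 = f_0^{(1)} + f_0^{(2)}$ with $f_0^{(1)}\in L^p(M)$ and $f_0^{(2)}\in H^{-1/2}_{\rm comp}(M^\circ)$, and let $\tilde{f}_0$ denote its extension by zero to $M_e$: the $L^p$ summand extended trivially across $\pl M$ and the compactly supported distribution viewed as a distribution on $M_e^\circ$. Then $\tilde{f}_0\in L^p(M_e)+H^{-1/2}_{\rm comp}(M_e^\circ)$, and $\tilde{f}_0$ vanishes identically on the open set $M_e\setminus M$.

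The core step is to show $\Pi_0^e\tilde{f}_0=0$ on $M_e$. By strict convexity of $\pl M$ inside $M_e$, each geodesic of $M_e$ meets $M$ in at most one arc, entering transversally through a unique point of $\pl_-SM$ when it does. Consequently, for a.e.\ $(x,v)\in \pl_-SM_e$,
\[
(I^e\pi_0^*\tilde{f}_0)(x,v)=
\begin{cases}
(I_0 f_0)(y_-(x,v)) & \text{if the geodesic enters $M$,}\\
0 & \text{otherwise,}
\end{cases}
\]
where $y_-(x,v)\in \pl_-SM$ is the entry point. Since $I_0 f_0=0$ by hypothesis, this yields $I^e\pi_0^*\tilde{f}_0=0$ in $L^2(\pl_-SM_e,d\mu_\nu)$, and the factorization $\Pi_0^e=(I^e\pi_0^*)^*(I^e\pi_0^*)$ then gives $\Pi_0^e\tilde{f}_0=0$ on $M_e$.

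Proposition \ref{PsidoPi0} states that $\Pi_0^e$ is an elliptic pseudo-differential operator of order $-1$ on $M_e^\circ$, so elliptic regularity applied to $\Pi_0^e\tilde{f}_0=0$ yields $\tilde{f}_0\in C^\infty(M_e^\circ)$. Because $\tilde{f}_0$ vanishes on the open set $M_e\setminus M$, which is adjacent to $\pl M\subset M_e^\circ$, smoothness of $\tilde{f}_0$ across $\pl M$ forces all its derivatives to vanish on $\pl M$. Restricting to $M$ concludes that $f_0\in C^\infty(M)$ and vanishes to infinite order on $\pl M$.

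The step I expect to be the main obstacle is the pointwise identity displayed above for the $H^{-1/2}$ component of $f_0$, since the entry map $y_-$ is only measurable and distributional composition with it is not a priori well defined. This should be handled by approximating $f_0^{(2)}$ in $H^{-1/2}_{\rm comp}(M^\circ)$ by a sequence in $C_c^\infty(M^\circ)$ with uniformly compact support, verifying the identity pointwise for smooth data via the flow-orbit decomposition, and then passing to the limit using the $L^2$-boundedness of $I_0$ from \eqref{TT*} together with the analogous boundedness of $I^e$ on $M_e$ from Lemma \ref{boundedI} (which applies since the non-escaping mass function of $M_e$ differs from that of $M$ by a time shift bounded by the collar length, and hence satisfies \eqref{condescape}).
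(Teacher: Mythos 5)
Your proof is correct and follows essentially the same route as the paper: extend $f_0$ by zero to $M_e$, observe (via strict convexity, so each $M_e$-geodesic meets $M$ in at most one arc) that $I_0f_0=0$ forces $I_0^e\tilde f_0=0$ and hence $\Pi_0^e\tilde f_0=0$, then invoke ellipticity of $\Pi_0^e$ from Proposition \ref{PsidoPi0} for smoothness and vanishing of $\tilde f_0$ on $M_e\setminus M$ for infinite-order vanishing at $\pl M$. The density argument you flag for the $H^{-1/2}_{\rm comp}$ summand is the right way to justify the step that the paper leaves implicit.
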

\begin{proof} First, $I_0f_0=0$ in $L^2(\pl_-SM; d\mu_{\nu})$ implies that $I^e_0f=0$ 
if $I^e_0=I^e\pi_0^*$ is the X-ray transform on functions on $M_e$ and $f_0$ is extended by $0$ in $M_e\setminus M$.
Thus $\Pi^e_0f_0=0$ in $M_e^\circ$. 
This implies, by ellipticity of $\Pi^e_0$ in $M_e^\circ$ that $f_0$ is smooth, and since it is equal to $0$ in $M_e^\circ\setminus M$, we deduce that $f_0$ vanishes to all order at 
$\pl M$. 
\end{proof}

\subsection{X-ray on symmetric tensors}
For any $m\in\nn$, symmetric cotensors of order $m$  on $M_e^\circ$ can be viewed as functions on $SM^\circ_e$ via the map 
\[\pi_m^*: C_c^{\infty}(M^\circ_e,\otimes_S^mT^*M^\circ_e)\to C_c^\infty(SM^\circ_e), \quad 
(\pi_m^*f)(x,v):= f(x)(\otimes^m v).\]
The dual operator is defined by
\[ {\pi_m}_*: C^{-\infty}(SM^\circ_e)\to C^{-\infty}(M^\circ_e,\otimes_S^mT^*M^\circ_e), \quad \cjg {\pi_m}_*u,f\cjd:= 
\cjg u,\pi_m^*f\cjd\]
Next, we define the operator $D:=\mc{S}\circ \nabla: C_c^{\infty}(M^\circ_e,\otimes_S^mT^*M_e)\to C_c^{\infty}(M^\circ_e,\otimes_S^{m+1}T^*M^\circ_e)$
by composing the Levi-Civita connection $\nabla$ with the symmetrization of tensors $\mc{S}:\otimes^{m+1}T^*M^\circ_e\to 
\otimes_S^{m+1}T^*M^\circ_e$.
The divergence of $m$-cotensors is the adjoint differential operator, which is given by $D^*f:=-\mc{T}(\nabla f)$ where  
$\mc{T}: \otimes_S^mT^*M\to  \otimes_S^{m-2}T^*M$ denotes the trace map defined by contracting with the Riemannian metric:
\begin{equation}\label{deftrace}  
\mc{T}(q)(v_1,\dots,v_{m-2}):=\sum_{i=1}^{n}q(e_i, e_i,v_1,\dots,v_{m-2})
\end{equation}
if $(e_1,\dots,e_n)$ is a local  orthonormal basis of $TM_e$. Each $u\in L^2(SM_e)$ 
function can be decomposed using the spectral decomposition of the vertical 
Laplacian $\Delta_v$ in the fibers of $SM_e$ (which are spheres)
\begin{equation}\label{decompuk}
u=\sum_{k=0}^\infty u_k, \quad \Delta_vu_k=k(k+n-2).
\end{equation}
where $u_k$ are $L^2$ sections of a vector bundle over $M_e$; see \cite{GuKa2, PSU}.

When \eqref{condescape} holds for some $p\in (2,\infty)$, we
define just as for $m=0$ the X-ray transform on $\otimes_S^mT^*M$ as the bounded operator
for all $p\in(2,\infty)$
\begin{equation}\label{defIm}
I_m := I\, \pi_m^*: L^p(M; \otimes_S^mT^*M)\to L^2(\pl_-SM, d\mu_\nu).
\end{equation} 
The adjoint $I_m^*:L^2(\pl_-SM, d\mu_\nu)\to L^{p'}(M; \otimes_S^mT^*M)$ is bounded if 
$1/p'+1/p=1$ and it is given by $I_m^*={\pi_m}_*I^*$. The operator $\Pi_m$ 
is simply defined as the bounded self-adjoint operator for $p\in (2,\infty)$ and $1/p'+1/p=1$ 
\begin{equation}\label{defofPim}
\Pi_m:= I_m^*I_m= {\pi_0}_*\Pi\,\pi_m^* : L^p(M; \otimes_S^mT^*M)\to L^{p'}(M; \otimes_S^mT^*M).
\end{equation}
As for $m=0$, we set $\Pi^e_m:={\pi_0}_*\Pi^e\,\pi_m^*$, which can also be seen as $(I^e_m)^*I^e_m$ on 
if $I^e_m=I^e\pi_m^*$ is the X-ray transform on $m$ cotensors on $M_e$. 
Repeating the arguments of \cite[Theorem 3.5]{Gu} but adapted to our case we get directly  
\begin{prop}\label{Psido}
Assume that the geodesic flow on $M$ has no conjugate points and that the trapped $K$ is hyperbolic.
For $m\geq 1$, 
the operator $\Pi^e_m$ is a pseudo-differential operator of order $-1$ on the bundle 
$\otimes_S^mT^*M^\circ_e$, which is elliptic on $\ker D^*$ in the sense that for all $\psi_0\in C_c^\infty(SM_e^\circ)$ 
there exist pseudo-differential operators $Q,S,R$ on 
$M^\circ_e$ with respective order $1,-2,-\infty$ so that 
\begin{equation}\label{parametrix}
Q\psi_0\Pi^e_m\psi_0=\psi_0^2+D\psi_0 S\psi_0D^*+R 
\end{equation}
\end{prop}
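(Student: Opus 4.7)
The plan is to adapt the scalar proof of Proposition \ref{PsidoPi0} to symmetric $m$-cotensors, following the strategy used in \cite[Th.~3.4]{Gu} for Anosov flows. The only difference between $\Pi^e_m$ and $\Pi^e_0$ is the replacement of $\pi_0^*, {\pi_0}_*$ by $\pi_m^*, {\pi_m}_*$, so the wavefront-set analysis carries over verbatim. Namely, writing $R_+(0)=\int_0^\eps e^{tX}dt+e^{\eps X}R_+(0)$ for small $\eps>0$, the composition rule \cite[Th. 8.2.14]{Ho} applied with \eqref{WFRpm} produces the same three candidate sets $S_1,S_2,S_3$ as in the proof of Proposition \ref{PsidoPi0}. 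The absence of conjugate points on $M_e$ (ensured via Lemma \ref{extconj}) forces $S_1=S_2=\emptyset$ exactly as before, since the only ingredient used is that $E_\pm^*\cap H^*=\{0\}$ and that the base projections of flowed horizontal covectors remain nondegenerate. Consequently the Schwartz kernel of $\chi\Pi^e_m\chi'$ is smooth off the diagonal for arbitrary cutoffs $\chi,\chi'\in C_c^\infty(M_e^\circ)$.

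Next I would analyze the local piece $L_\eps^m := \int_0^\eps {\pi_m}_* e^{tX}\pi_m^*\,dt$, whose kernel coincides with that of $\chi\Pi^e_m\chi'$ near the diagonal. Explicitly
\[
L_\eps^m f(x)=\int_0^\eps\int_{S_xM_e} f(\pi_0(\varphi_t(x,v)))\bigl(\otimes^m d\pi_0(\varphi_t(x,v))^{-T}v^\flat\bigr)\,dv\,dt,
\]
which, after a local change of variables $(t,v)\mapsto y=\pi_0(\varphi_t(x,v))$ as in \cite[Lem.~3.1]{PeUh}, is seen to be an elliptic-symbol $\Psi$DO of order $-1$ on $\otimes_S^m T^*M_e^\circ$. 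A direct computation gives the principal symbol
\[
\sigma(\Pi^e_m)(x,\xi)\,f = \frac{C_n}{|\xi|_g}\int_{S_xM_e\cap \xi^\perp} \bigl(f(v^{\otimes m})\bigr)\,v^{\otimes m}\,dv,
\]
viewed as an endomorphism of $\otimes_S^m T^*_xM_e$.

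The third step is to identify the kernel of this symbol: a short calculation shows $\sigma(\Pi^e_m)(x,\xi)\cdot \sigma(D)(x,\xi)h=0$ for any $(m-1)$-tensor $h$, because $\sigma(D)(x,\xi)h=i\,\xi\otimes_S h$ and $\langle v,\xi\rangle=0$ on the domain of integration. Conversely, on the orthogonal complement of $\mathrm{Im}\,\sigma(D)(x,\xi)$, equivalently on $\ker\sigma(D^*)(x,\xi)$ (the pointwise Helmholtz decomposition $\otimes_S^m T_x^*M=\mathrm{Im}(\xi\otimes_S\cdot)\oplus\ker \iota_\xi$), the symbol $\sigma(\Pi^e_m)$ is invertible. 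This is precisely ellipticity on solenoidal tensors.

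The parametrix identity \eqref{parametrix} is then a purely symbolic construction in the $\Psi$DO calculus: one builds $Q$ of order $1$ inverting $\sigma(\Pi_m^e)$ on $\ker\sigma(D^*)$ and supplementing by a gauge term $D S D^*$ of order $0$, with $S$ of order $-2$ chosen so that the combined principal symbol equals $\mathrm{Id}$; the remainder $R$ is smoothing. The argument is local and identical to the one given in \cite[Th.~3.4]{Gu}, so I would simply invoke it once the $\Psi$DO structure and the symbol computation above are in place. The main obstacle is the symbol calculation and verifying that the kernel of $\sigma(\Pi_m^e)$ matches exactly $\mathrm{Im}\,\sigma(D)$ (not something larger), which is the content of the classical solenoidal/potential decomposition for symmetric tensors; once this is checked, the parametrix construction is routine.
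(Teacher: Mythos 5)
Your proposal follows the same route as the paper: reduce the off-diagonal analysis to the wavefront-set computation done for $m=0$ in Proposition~\ref{PsidoPi0} (the sets $S_1,S_2,S_3$ vanish thanks to $E_\pm^*\cap H^*=\{0\}$ and absence of conjugate points), then perform the local near-diagonal analysis and the symbolic parametrix construction on $\ker D^*$ exactly as in \cite[Th.~3.4]{Gu}. This is precisely what the paper does (it cites \cite{Gu} and the $m=0$ proof without repeating details); your only slips are cosmetic, namely calling $L_\eps^m$ an ``elliptic-symbol'' operator before you have identified its nontrivial symbol kernel, and the explicit formula for $L_\eps^m f$ should pair $f$ with $v(t)^{\otimes m}$ at the target point and output $(v^\flat)^{\otimes m}$, rather than the $d\pi_0^{-T}v^\flat$ expression as written.
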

The only difference with \cite[Theorem 3.5]{Gu} is that the flow is not hyperbolic everywhere anymore, but using that the bundle 
$E_\pm^*$ are transverse to the annihilator $H^*$ of the vertical bundle $V=\ker d\pi_0$,
the proof reduces to be the same, just as we explained in the proof of Proposition \ref{PsidoPi0} for $m=0$. We do not 
repeat the arguments, as it does not bring anything new. The same result as \eqref{TT*} also holds for 
$I_m$ and $I_m^*$ since $\Pi_m$ is a $\Psi$DO of order $-1$: if $\mc{O}\subset M^\circ$ is any compact domain (with non-empty interior) with smooth boundary,
\begin{equation}\label{TT*m}
I_m:H^{-1/2}(\mc{O},\otimes^m_ST^*M)\to L^2(\pl_-SM;d\mu_\nu).
\end{equation}

\subsection{Injectivity of X-ray transform on symmetric tensors}

In this section, we use the Pestov identity and the smoothness property in Corollary \ref{regulkerI}
to prove injectivity of X-ray transform on functions and 1-forms in case of hyperbolic trapping. The proof is basically the same as in the simple domain setting, once we have proved the smoothness of elements in $\ker I_m\cap \ker D^*$.

\begin{theo}\label{Inj}
Let $(M,g)$ be a compact Riemannian manifold with strictly convex boundary. Assume that  the geodesic flow has no conjugate points, that the trapped set $K$ is hyperbolic. \\ 
1) Let $f_0\in L^p(M)+H^{-1/2}_{\rm comp}(M^\circ)$ with $p>2$ such that $I_0f_0=0$, then $f_0=0$.\\
2) Let $f_1\in C^\infty(M; T^*M)+H^{-1/2}_{\rm comp}(M^\circ;T^*M)$ such that $I_1f_1=0$, 
then there exists $\psi\in C^\infty(M)+H^{1/2}_{\rm comp}(M^\circ)$ vanishing at $\pl M$ such that $f_1=d\psi$.\\
3) Assume that the sectional curvatures of $g$ are non-positive, then if for $m>1$, $f_m\in C^\infty(M; \otimes_S^mT^*M)$  satisfies $I_mf_m=0$, then $f_m=Dp_{m-1}$ for some $p_{m-1}\in C^\infty(M; \otimes_S^{m-1}T^*M)$ which vanishes at $\pl M$.
\end{theo}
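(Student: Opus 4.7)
The plan is to follow the classical four-step strategy — solenoidal reduction, regularity of the solenoidal part, construction of a smooth primitive on $SM$, and Pestov identity — with the essential twist that the primitive $u$ with $Xu = \pi_m^* f_m^s$, which in the simple-domain setting would be an explicit forward line integral, now comes from Proposition \ref{kernelPi}, since that integral is meaningless on the trapped set. In all three parts one reduces to showing that a suitable solenoidal representative $f_m^s$ with $I_m f_m^s = 0$ must be of the form $Du_{m-1}$ with $u_{m-1}|_{\pl M}=0$ (the convention $u_{-1}:=0$ covers $m=0$).

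First I solve the Dirichlet elliptic boundary-value problem $D^*Dh = D^*f_m$ with $h|_{\pl M}=0$ and set $f_m^s := f_m - Dh$; since $\pi_m^*(Dh)=X\pi_{m-1}^*h$ and $h$ vanishes on $\pl M$, the integral $I_m(Dh)$ along any geodesic with endpoints on $\pl M$ telescopes to zero, so $I_m f_m^s = 0$. For the regularity step, $m=0$ is handled directly by Corollary \ref{regulkerI}, while for $m\geq 1$ the parametrix \eqref{parametrix} of Proposition \ref{Psido} combined with $D^* f_m^s = 0$ and $\Pi^e_m f_m^s=0$ bootstraps $f_m^s\in C^\infty(M^\circ)$; extension by zero to $M_e$ then forces vanishing to infinite order at $\pl M$. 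Lemma \ref{PivsR} upgrades $I\pi_m^*f_m^s=0$ to $\Pi^e \pi_m^* f_m^s = 0$ on $SM$, and Proposition \ref{kernelPi} produces $u\in C^\infty(SM)$ with $Xu = \pi_m^* f_m^s$, $u|_{\pl SM}=0$, and $u$ vanishing to infinite order at $\pl SM$.

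The heart of the argument, and the step I expect to be the main obstacle, is the Pestov identity applied to this $u$. Since $u$ is smooth and vanishes on $\pl SM$, all boundary contributions drop out, and the identity reads schematically
\[ \norm{\nabla_v Xu}^2_{L^2(SM)} = \norm{X\nabla_v u}^2_{L^2(SM)} - \langle R\nabla_v u, \nabla_v u\rangle_{L^2(SM)} + (n-1)\norm{Xu}^2_{L^2(SM)},\]
where $R$ is a zeroth-order curvature operator. Under the no-conjugate-points hypothesis (for $m=0,1$) and the additional non-positive sectional curvature hypothesis (for $m\geq 2$), the right-hand side becomes coercive enough, in the spirit of the Sharafutdinov / Dairbekov--Sharafutdinov / Paternain--Salo--Uhlmann arguments, to force the fiber spherical-harmonic decomposition $u=\sum_k u_k$ of \eqref{decompuk} to terminate at degree $m-1$. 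Matching degrees in $Xu = \pi_m^* f_m^s$ then yields $f_m^s = Du_{m-1}$, whence $f_0 = 0$ for $m=0$, $\psi := h + u_0$ satisfies $f_1 = d\psi$ with $\psi|_{\pl M} = 0$ for $m=1$, and $p_{m-1} := h + u_{m-1}$ works for $m\geq 2$.

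The delicate point, absent from the simple-domain case, is that the Pestov step hinges on global smoothness of the primitive $u$ on all of $SM$, including across the hyperbolic trapped set. This is precisely what Proposition \ref{kernelPi} supplies — and ultimately rests on the anisotropic-Sobolev resolvent construction of Proposition \ref{DyGu} via the hyperbolicity of $K$ — so that the classical Pestov identity, which would otherwise only be available on the complement of $\Gamma_-\cup\Gamma_+$ and suffer from uncontrollable boundary terms on that set, can be used as if we were on a simple manifold.
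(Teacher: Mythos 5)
Your proposal is correct and follows essentially the same strategy as the paper: solenoidal (Hodge) reduction, elliptic regularity of the solenoidal part via Corollary \ref{regulkerI} and the parametrix \eqref{parametrix}, construction of a global smooth primitive on $SM$ from Proposition \ref{kernelPi} (which is indeed the step that replaces the explicit forward line integral of the simple case and rests on the hyperbolic-trapping resolvent machinery), and finally the Pestov identity with the $1$-controlled/non-positive curvature inequality as in Paternain--Salo--Uhlmann. The only cosmetic difference is that you perform the solenoidal reduction uniformly in $m$, whereas the paper writes it out explicitly only for $m=1$, handles $m=0$ directly, and for $m\geq 2$ defers the reduction to the argument of Theorem 11.8 of \cite{PSU} (where it is unnecessary as a preliminary step since $f_m$ is already assumed smooth there); neither choice affects the substance of the proof.
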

\begin{proof} Let us first show 1) and 2). Using Hodge decomposition we write $f_1=d\psi+f_1'$ with 
$f_1'\in C^\infty(M,T^*M)+H^{-1/2}_{\rm comp}(M^\circ,T^*M)$ satisfying $D^*f_1'=0$ and 
$\psi\in C^\infty(M)+H^{1/2}_{\rm comp}(M^\circ)$ satisfying $\psi|_{\pl M}=0$. This can be done by taking $\psi:=\Delta_D^{-1}\delta f_1$ where $\Delta_D^{-1}$ is the inverse of the Dirichlet Laplacian  on 
$(M,g)$ and $\delta:=d^*=D^*$ on $1$-forms. Notice that $f_1'$ is smooth near $\pl M$ since $f_1$ is 
(using ellipticity of $\Delta_D$).
Since $I_1d\psi=0$ we get $\Pi_1f'_1=0$ and $\Pi_1^ef_1'=0$. 
By applying \eqref{parametrix} to $f_1'$ with $\psi_0=1$ on $M$, 
we get that $f_1'\in C^{\infty}(M^\circ)$ thus $f_1'\in C^{\infty}(M)$.
Since also $\Pi_0f_0=0$, Corollary \ref{regulkerI} then implies that $f_0$ and $f_1'$ are smooth.
By Proposition \ref{kernelPi}, we see that there exists $u_j\in C^\infty(SM)$ for $j=0,1$ 
such that $Xu_0=\pi_0^*f_0$ and $Xu_1=\pi_1^*f_1'$, with 
$u_j$ vanishing to all order on $\pl SM$. Now since the functions $u_j$ are smooth and vanish at the boundary $\pl SM$, Pestov's identity \cite[Proposition 2.2. and Remark 2.3]{PSU} holds here in the same way as it does for simple manifolds with boundary or for closed manifolds: 
\begin{equation}\label{pestov} 
|| \nabla^v Xu_j ||^2_{L^2}=||X\nabla^vu_j||^2_{L^2}-\cjg R\nabla^v u_j,\nabla^vu_j\cjd +(n-1)||Xu_j||_{L^2}^2
\end{equation}
where $\nabla^v$ is the covariant derivative in the vertical direction of $SM$, mapping functions on 
$SM$ to sections of the bundle $E\to SM$ with fibers 
\[ E_{(x,v)}:=\{ w\in T_xM; g_x(w,v)=0\},\]
$R$ is the curvature tensor acting on $E$ by $R_{(x,v)}w:=R(w,v)v\in E_{(x,v)}$, and $X$ acts on sections of $E$
by differentiating parallel transport along the geodesic (see Section 2 of \cite{PSU}).
Then the proof of Lemma 11.2 of \cite{PSU} and Proposition 7.2 of \cite{DKSU} 
is based on Santalo's formula \eqref{santalo} and thus
applies as well in our setting (ie. the boundary is strictly convex, there is   
no conjugate points and $\Gamma_+\cup \Gamma_-$ has Liouville measure $0$), 
then for all $Z\in C^\infty(SM,E)$
\[ ||XZ||_{L^2}-\cjg RZ,Z\cjd\geq 0\]
with equality if and only if $Z=0$. In particular, since $\nabla^v Xu_0=\nabla^vf_0=0$, 
we deduce from \eqref{pestov} that $f_0=0$, and since $||\nabla^vXu_1||_{L^2}^2=(n-1)||f_1||^2_{L^2}$, we deduce 
from \eqref{pestov} that $\nabla^vu_1=0$ and thus $u_1=\pi_0^*\psi'$ for some smooth function 
$\psi'$ on $M$ which vanishes to all order at $\pl M$; 
this implies that $Xu_1=\pi_1^*d\psi'$. Notice that if $D^*f_1'=0$, then $D^*f_1'=\Delta_g\psi'=0$ and therefore $\psi'=0$ since $\psi'$ vanishes at $\pl M$. Thus $f_1'=0$.

Finally, the case with $m>1$ when the curvature of $g$ is non-positive uses the proof
 of \cite{CrSh} (in the closed case) and \cite[Section 11]{PSU} (in the case of simple domains). 
If $I_mf=0$, we also have $I^e_mf_m=0$ and thus $\Pi^e\pi_m^*f_m=0$. By Proposition \ref{kernelPi}, 
there exists $u=-R_+(0)\pi_m^*f_m=-R_-(0)\pi_m^*f_m$ smooth in $SM$ such that $Xu=\pi_m^*f_m$
and $u|_{\pl SM}=0$.  Non-positive curvature implies that the flow is $1$-controlled in the sense of \cite{PSU} and once we know that 
$Xu=\pi_m^*f_m$ with $u$ smooth and vanishing at $\pl M$, the proof of Theorem 11.8 in \cite{PSU} 
(that proof is detailed in Section 9 and 11) based on Pestov identity applies verbatim in our case . We do not repeat it here as it does not bring anything new. 
\end{proof}

We get Theorem \ref{Th1} and Theorem \ref{Th0bis} as a direct corollary:

\emph{Proof of Theorem \ref{Th1}}. We only prove 2) since the 
conformal case 1) is easier and a direct consequence of point 1) in Theorem \ref{Inj}.
If the metrics are lens equivalent, $\Gamma_\pm\cap \pl_\pm SM$ are the same for all metrics, and
for a fixed $y:=(x,v)\in \pl_-SM\setminus \Gamma_-$, the geodesic $\gamma_s(y;t)$ with $t\in [0,\ell_+(y)]$ depends smoothly on 
$s$ (by general ODE arguments) and by differentiating $\pl_s\ell_+(y)^2=0$, we obtain
that $q_s:=\pl_s g_s$ is a smooth symmetric $2$-tensors satisfying $I_2^sq_s=0$ if $I_2^s$ is the X-ray for $g_{s}$ on symmetric $2$ cotensors. The argument is standard and detailed in \cite[Section 1.1]{Sh}. Applying Theorem \ref{Inj} with $m=2$ in non-positive curvature shows 
that $q_s=D_sp_s$ for some smooth $1$-form $p_s$ vanishing at $\pl M$. The tensor $p_s$ can be written 
as $p_s=(\Delta_{D_s})^{-1}D_s^*q_s$ if $\Delta_{D_s}:=D_s^*D_s$ with Dirichlet condition at $\pl M$ 
(this is invertible, see \cite{Sh}). Then we argue like in the proof of \cite[Theorem 1]{GuKa1}: 
by ellipticity of $\Delta_{D_s}$ and smootness in $s$, $p_s$ is smooth in $s$.
Then one can construct a smooth family of diffeomorphisms $\phi_s$ which are the identity on $\pl M$ so that 
$\phi_s^{-1}\pl_s\phi_s=p_s$ and $\phi_0={\rm Id}$ (here we view $p_s$ as a vector field). This concludes the proof.
\qed

\emph{Proof of Theorem \ref{Th0bis}}. A negatively curved manifold with strictly convex boundary has hyperbolic trapped set $K$ 
(see \cite[\S3.9 and Theorem 3.2.17]{Kl2}), no conjugate points (see \cite{Kl}). Thus, combining Theorem \ref{Th1} 
with Proposition \ref{youngbowenruelle}, we obtain Theorem \ref{Th0bis}.\qed

\subsection{Invariant distributions with prescribed push-forward}

We will show the existence of invariant distributions on $SM$ with prescribed push-forward. This corresponds essentially 
to surjectivity of $I^*_0$ and of $I_1^*$ on $\ker D^*$. 
\begin{prop}\label{surj}
We make the same assumptions as in Theorem \ref{Inj}.\\ 
1) For any $f_0\in H^s(M)$ for $s>1$, there exists 
$w\in (\cap_{u<0} H^{u}(SM_e))\cap L^1(SM_e)$ such that 
$Xw=0$ in $SM_e^\circ$ and ${\pi_0}_*w=f_0$ in $M$. Moreover, if $f_0\in C^\infty(M)$, $w$ 
 has wavefront set satisfying
$ {\rm WF}(w)\subset E_+^*\cup E_-^*$
and its boundary value $\omega=w|_{\pl SM}$ satisfies 
\eqref{wboundary} and $\omega\in L^2_{S_g}(\pl SM)$, and $w\in H^{s}(SM_e)$ for some $s>0$.\\
2) Let $f_1\in C^\infty(M; T^*M)$ satisfying $D^*f_1=0$, then there exists $w\in L^{p'}(SM_e)$ 
such that $Xw=0$ in $SM_e^\circ$ and ${\pi_1}_*w=f_1$ in $M$, with 
${\rm WF}(w)\subset E_+^*\cup E_-^*$ and  $\omega:=w|_{\pl SM}$ satisfies 
\eqref{wboundary} and  is in $L^2_{S_g}(\pl SM)$.
\end{prop}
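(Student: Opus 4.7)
\emph{Part 1.} The strategy is to produce $w$ in the form $w = \Pi^e(\pi_0^* h_0)$ with $h_0$ compactly supported in $M_e^\circ$. Any such $w$ automatically satisfies $Xw = 0$ in $SM_e^\circ$ by Proposition~\ref{Pi}(1), and
\[ {\pi_0}_* w \;=\; {\pi_0}_* \Pi^e \pi_0^* h_0 \;=\; \Pi_0^e h_0, \]
so the task reduces to solving $\Pi_0^e h_0 = f_0$ on $M$ for some $h_0 \in H^{s+1}_{\rm comp}(M_e^\circ)$ (and $h_0 \in C_c^\infty(M_e^\circ)$ when $f_0 \in C^\infty(M)$).

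First I would extend $f_0$ to a compactly supported $\til f_0 \in H^s_{\rm comp}(M_e^\circ)$ agreeing with $f_0$ on $M$. By Proposition~\ref{PsidoPi0}, $\Pi_0^e$ is a self-adjoint elliptic pseudo-differential operator of order $-1$ on $M_e^\circ$, hence admits a pseudo-differential parametrix of order $+1$. On the other hand, Theorem~\ref{Inj}(1) applied to $(M_e, g)$ (whose trapped set coincides with that of $(M,g)$ and which inherits all relevant hypotheses by Lemma~\ref{extconj}), combined with the positivity identity $\cjg \Pi_0^e h, h\cjd = \|I_0^e h\|_{L^2(\pl_- SM_e, d\mu_\nu)}^2$, implies that $\Pi_0^e$ is injective on compactly supported distributions of the relevant regularity. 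A Fredholm argument on a compact submanifold $\Omega$ with $M \subset \Omega \subset M_e^\circ$, using a cutoff $\chi \in C_c^\infty(M_e^\circ)$ equal to $1$ near $\Omega$ and working with the localized operator $\chi \Pi_0^e \chi$ extended to an auxiliary closed manifold, then combines ellipticity with injectivity and self-adjointness to produce the desired $h_0$.

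Once $h_0$ is constructed, the announced properties of $w$ follow directly from the results in Proposition~\ref{Pi}: the $L^{p'}(SM_e)$ bound from part~1); membership in $\cap_{u<0} H^u(SM_e)$ from part~2) applied with any $s_1 \in (0, 1/2)$, since $\pi_0^* h_0 \in H_0^{s_1}(SM_e)$ for every such $s_1$; the wavefront set inclusion ${\rm WF}(w) \subset E_+^* \cup E_-^*$ and the $L^2_{S_g}(\pl SM)$ boundary value with wavefront set in $E_{\pl,\pm}^*$ (smooth case) from part~2); and Sobolev regularity $w \in H^{s'}(SM_e)$ for some $s'>0$ under negative escape rate from Proposition~\ref{Rpm0}(4).

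\emph{Part 2.} The argument runs in parallel, with the ansatz $w = \Pi^e(\pi_1^* h_1)$ for $h_1 \in C_c^\infty(M_e^\circ; T^*M_e)$ in $\ker D^*$, so that ${\pi_1}_* w = \Pi_1^e h_1$. The task is to solve $\Pi_1^e h_1 = f_1$ on $M$ within $\ker D^*$. The parametrix identity \eqref{parametrix} for $m=1$ makes $\Pi_1^e$ elliptic modulo a term $D\,S\,D^*$ that vanishes on $\ker D^*$, and combined with the injectivity of $I_1$ on solenoidal data from Theorem~\ref{Inj}(2) (so that $\Pi_1^e h_1 = 0$ and $D^* h_1 = 0$ imply $h_1 = 0$), the same Fredholm argument as in Part~1, restricted to $\ker D^*$, produces the required $h_1$. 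The wavefront set, $L^{p'}$ and boundary value assertions for $w$ then follow from Proposition~\ref{Pi} exactly as in Part~1.

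\emph{Main obstacle.} The principal difficulty lies in turning the interior ellipticity of $\Pi_0^e$ (respectively $\Pi_1^e$ on $\ker D^*$) on the manifold with boundary $M_e^\circ$ into a clean solvability statement with compactly supported solution. Since these operators are not globally elliptic on any natural closed extension of $M_e$, the Fredholm alternative must be set up carefully by localization via cutoffs, and one has to check that the injectivity coming from Theorem~\ref{Inj} transfers to injectivity of the globalized (cut-off) operator on a closed auxiliary manifold, so that self-adjointness forces surjectivity onto the orthogonal complement of the trivial kernel. For Part~2, the additional Hodge projection onto $\ker D^*$ — needed because $\Pi_1^e$ is only elliptic there — must be made compatible with the compact support constraint on $h_1$.
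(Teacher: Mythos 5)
Your overall strategy is the same as the paper's: set $w = \Pi^e(\pi_0^* h_0)$ (resp. $\Pi^e(\pi_1^* h_1)$) for a compactly supported $h_0$ (resp.\ solenoidal $h_1$), reduce the problem to solving $\Pi_0^e h_0 = \tilde f_0$ (resp.\ $\Pi_1^e h_1 = \tilde f_1$) near $M$, and obtain $h_0$ by combining interior ellipticity (Propositions~\ref{PsidoPi0} and~\ref{Psido}) with the injectivity from Theorem~\ref{Inj} via a Fredholm argument on a closed auxiliary manifold; the final regularity and boundary statements do follow from Proposition~\ref{Pi} as you say.

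However, the Fredholm step as you describe it has a genuine gap, and you have in fact flagged exactly where it is without resolving it. The operator $\chi\,\Pi_0^e\,\chi$ extended by zero to a closed manifold $Y$ is \emph{not} globally elliptic and its kernel is enormous (it contains everything supported in $\{\chi = 0\}$), so ``self-adjointness forces surjectivity onto the orthogonal complement of the trivial kernel'' simply does not apply: the kernel is not trivial, and without global ellipticity you cannot even conclude closed range. The missing device, which is the crux of the paper's proof, is to replace $\chi\,\Pi_0^e\,\chi$ by
\[
P_0 \;=\; \psi_0\,\Pi_0^e\,\psi_0 \;+\; (1-\psi_0)\,(1+\Delta_g)^{-1/2}\,(1-\psi_0)
\]
with $\psi_0\in C_c^\infty(Y)$ supported in $M_e$ and $\equiv 1$ near $M$. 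The complementary term makes $P_0$ a genuinely elliptic $\Psi$DO of order $-1$ on all of $Y$, hence $P_0$ has closed range; and if $P_0 f = 0$ one deduces $(1-\psi_0)f=0$ by injectivity of $(1+\Delta_g)^{-1/2}$ and then $\psi_0 f = 0$ from $\langle \Pi_0^e(\psi_0 f),\psi_0 f\rangle = \|I_0^e(\psi_0 f)\|^2_{L^2} = 0$ and Theorem~\ref{Inj} applied on $M_e$. Injectivity plus closed range then gives surjectivity of the adjoint, and one sets $w = \Pi^e(\psi_0 u)$ where $P_0^* u = \tilde f_0$. For Part~2 the analogous operator is $P_1 = \psi_0\,\Pi_1^e\,\psi_0 + (1-\psi_0)(1+D^*D)^{-1/2}(1-\psi_0)$ acting on the solenoidal Sobolev scale $H^{-s}_{D^*}$, together with a divergence-free extension of $f_1$ near $M$ (the paper invokes \cite{KMPT}); the parametrix~\eqref{parametrix} with the $DSD^*$ term discarded on $\ker D^*$ then runs the same way. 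Without this complementary-term construction your plan does not close, so you should treat it as the central step rather than an obstacle to be addressed later.
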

\begin{proof} 
Let $Y$ be a closed manifold extending smoothly $M_e$ across its boundary,  
 extend the metric smoothly to $Y$ (and still call the extension $g$). 
Let $\psi_0\in C^\infty_c(Y)$ with support in $M_e$ which is equal to $1$ on a neighborhood of $M$ and write 
$\psi:=\pi_0^*(\psi_0)$ its lift to $SY$. 
Using Proposition \ref{PsidoPi0}, define the elliptic $\Psi$DO of order $-1$ on $Y$
\[P_0=\psi_0 \Pi^e_0 \psi_0+(1-\psi_0)(1+\Delta_g)^{-1/2}(1-\psi_0) : H^{-s}(Y)\to H^{-s+1}(Y)\]
bounded for all $s\geq 0$; here $\Delta_g$ is the Laplacian on $(Y,g)$. 
Thus there exists $C>0$ and $K:H^{-s}(Y)\to H^{-s+1}(Y)$ 
a bounded $\Psi$DO (of order $-1$) such that for all $f\in H^{-s}(Y)$
\[ ||P_0f||_{H^{1-s}(Y)}\geq C||f||_{H^{-s}(Y)}-||Kf||_{H^{-s+1}(Y)}\]
and thus the range of $P_0$ is closed.  Consequently, by Banach closed range theorem, 
$P_0^*:H^{s-1}(Y)\to H^{s}(Y)$ has closed range. Note that $P_0^*$ has the same form as $P_0$, 
and to prove its surjectivity, it suffices to prove injectivity of $P_0$. 
If $P_0f=0$, then $f\in C^\infty(Y)$ by ellipticity of $P_0$, and $(1-\psi_0)f=0$ since $(1+\Delta_g)^{-1/2}$ is injective,
 and $\cjg \Pi^e_0(\psi_0f),\psi_0f\cjd_{L^2}=0$. This implies that $I_0^e(\psi_0f)=0$ and by 
Theorem \ref{Inj} applied with $M_e$ instead of $M$, we get $\psi_0f=0$, thus $f=0$.
We deduce that if $f_0\in H^s(M)$, taking an extension $\tilde{f}_0\in H^{s}(Y)$ supported 
in the region where $\psi_0=1$, there exists a unique $u\in H^{s-1}(Y)$ such that $P_0^*u=\tilde{f}_0$. Note that if 
$f_0$ is smooth, $u$ is smooth by ellipticity of $P_0^*$.
In particular, we get $\psi_0\Pi^e_0(\psi_0u)=\tilde{f}_0$ and taking $w:=\Pi^e(\psi_0u)$, we get $Xw=0$ in $SM_e$,
${\pi_0}_*w=f_0$ in $M$, and by Proposition \ref{Pi}, we obtain the desired regularity
for $w$ and the properties of its restriction $w|_{\pl SM}$ and \eqref{wboundary}. This proves 1). 

The proof of 2) is essentially the same as in \cite[Lemma 2.2]{DaUh} once we know 
Proposition \ref{Psido} and the kernel of $I_1$. We just recall very briefly the argument 
and refer to \cite[Lemma 2.2]{DaUh} for details. First, by  \cite[Corollary 3.3]{KMPT} 
(see also the last remark of that paper for the manifold case) there is a bounded extension operator 
$E: \ker D^*|_{L^2(M,T^*M)}\to  \ker D^*|_{L^2(M_e^\circ,T^*M_e)}$ which restricts continuously
to $E:\ker D^*|_{C^\infty(M,T^*M)}\to  \ker D^*|_{C_c^\infty(M_e^\circ,T^*M_e)}$ then if 
$r_M:L^2(M_e,T^*M_e)\to L^2(M,T^*M)$ is the restriction to $M$, we get from Proposition \ref{Psido} 
that $r_M\Pi_1^e\psi_0Q^*E={\rm Id}+r_MR^*E$ as a map on $\ker D^*|_{L^2(M,T^*M)}$ 
with $R$ smoothing on $M_e^\circ$. This implies that the range of ${\rm Id}+r_MR^*E$ is closed with finite codimension, and the same holds on $\ker D^*|_{C^\infty(M,T^*M)}$. 
Then $r_M\Pi_1^e\psi_0Q^*E(\ker D^*|_{C^\infty(M,T^*M)})$ has closed range in $\ker D^*|_{C^\infty(M,T^*M)}$
with finite codimension and thus $r_M\Pi_1^e\psi_0Q^*(C_0^\infty(M_e^\circ,T^*M_e))$ has closed range with finite codimension in $\ker D^*|_{C^\infty(M,T^*M)}$. The kernel of the adjoint is trivial by using 
Theorem \ref{Inj} just as in \cite[Lemma 2.2.]{DaUh}. This shows that there is $u\in C^\infty(M_e,T^*M_e)$ such that $r_M\Pi_1^eu=f_1$, and thus setting $w:=\Pi^e\pi_1^*u$ we get the result.
 \end{proof}

\section{Determination of the conformal structure for surfaces}

In this Section, we will study the lens rigidity for surfaces with strictly convex boundary, no conjugate points 
and hyperbolic trapped set. To recover the conformal structure from the scattering map, we shall use most of the results proved above together with the approach of Pestov-Uhlmann \cite{PeUh} which reduces the scattering rigidity to the Calder\'on problem on surfaces. 

For the oriented Riemannian surface $M_e$ with boundary, the unit tangent bundle $SM_e$ is a principal circle bundle, with an action 
\[ S^1\x SM_e\to SM_e , \quad e^{i\theta}.(x,v)=(x,R_\theta v)\]
where $R_\theta$ is the rotation of angle $+\theta$. This induces a vector field $V$ generating this action, defined by
$Vf(x,v)=\pl_\theta(f(e^{i\theta}.(x,v))|_{\theta=0}$. We then define the vector field $X_\perp:=[X,V]$ and the basis 
$(X,X_\perp,V)$ is an orthonormal basis of $SM_e$ for the Sasaki metric.
The space $SM_e$ splits into $SM_e=\mc{V}\oplus\mc{H}$ where 
$\mc{V}=\rr V=\ker d\pi_0$ is the vertical space, and $\mc{H}={\rm span}(X,X_\perp)$ the horizontal space which can also be defined using the Levi-Civita connection (see for example \cite{Pa}). 
Following Guillemin-Kazhdan \cite{GuKa1}, there is an orthogonal decomposition (Fourier series in the fibers) 
\begin{equation}\label{decomp}
L^2(SM^\circ_e)=\bigoplus_{k\in\zz} \Omega_k, \quad \textrm{ with }Vw_k=ikw_k \textrm{ if }w_k\in \Omega_k
\end{equation}
where $\Omega_k$ is the space of $L^2$ sections of a complex line bundle over $M_e^\circ$. 
Similarly, one has a decomposition on $\pl SM$
\begin{equation}\label{decompplSM}
L^2(\pl SM)=\bigoplus_{k\in\zz} \Omega_k', \quad \textrm{ with }V\omega_k=ik\omega_k \textrm{ if }\omega_k\in \Omega'_{k}
\end{equation}
using Fourier analysis in the fibers of the circle bundle.

\subsection{Hilbert transform and Pestov-Uhlmann commutator relation}
The Hilbert transform in the fibers is defined by using the decomposition \eqref{decomp}: 
\[H: L^2(SM^\circ_e)\to L^2(SM^\circ_e),\quad H(\sum_{k\in\zz}w_k)=-i\sum_{k\in\zz}{\rm sign}(k)w_k.\]
with ${\rm sign}(0):=0$ by convention. It is skew-adjoint and $\bbar{Hu}=H\bbar{u}$, thus we can extend continuously 
$H$ to $C^{-\infty}(SM^\circ_e)\to C^{-\infty}(SM^\circ_e)$ by the expression
\[ \cjg Hu,\psi\cjd :=-\cjg u,H\psi\cjd , \quad \psi\in C_c^\infty(SM^\circ_e) \]
where the distribution pairing is $\cjg u,\psi\cjd=\int_{SM_e}u\psi d\mu$ when $u\in L^2(SM^\circ_e)$.
Similarly, we define the Hilbert transform in the fibers on $\pl SM$
\[H_\pl: C^\infty(\pl SM)\to C^\infty(\pl SM),\quad H_\pl(\sum_{k\in\zz}\omega_k)=-i\sum_{k\in\zz}{\rm sign}(k)\omega_k\]
and its extension to distributions as for $SM_e$.
For smooth $w\in C_c^\infty(SM_e^\circ)$ we have that 
\begin{equation}\label{hilbertrest} 
(Hw)|_{\pl SM}=H_{\pl}\,\omega, \quad\textrm{ with } \omega:=w|_{\pl SM}
\end{equation}
thus the identity extends by continuity to the space of distributions in $SM_e^\circ$ with wave-front set
disjoint from $N^*(\pl SM)$ since, by \cite[Theorem 8.2.4]{Ho}, the restriction map $C^\infty(SM_e^\circ)\to C^\infty(\pl SM)$ 
obtained by pull-back through the inclusion map $\iota$ of \eqref{iota} extends continuously to the space 
of distributions on $SM_e^\circ$ with wavefront set not intersecting $N^*(\pl SM)$.
By \cite[Lemma 3.5]{Gu}, we see that ${\rm WF}(Hu)\subset {\rm WF}(u)$ for all $u\in C^{-\infty}(SM_e)$ and the 
same holds for $H_\pl$ and $u\in C^{-\infty}(\pl SM)$.
The following commutator relation between Hilbert transform and flow follows easily from the Fourier decomposition and was proved by Pestov-Uhlmann \cite[Theorem 1.5]{PeUh}: 
\begin{equation}\label{commutator}
\textrm{ if }w\in C^\infty(SM_e^\circ), \quad [H,X]w=X_\perp w_0+(X_\perp w)_0
\end{equation}
where $w_0=\frac{1}{2\pi}\pi_0^*({\pi_0}_*w)$ and
${\pi_0}_*w(x)=\int_{S_xM_e}w(x,v)dS_x(v)$ for smooth $w$. 
Notice that $w\in C^\infty(SM_e^\circ)\mapsto w_0\in C^\infty(SM_e^\circ)$ 
extends continuously to $C^{-\infty}(SM_e^\circ)$ since 
$\pi_0$ is a submersion (the pullback $\pi_0^*$ extends to distributions), 
then the relation \eqref{commutator} extends continuously to 
$C^{-\infty}(SM_e^\circ)$. We also have, for any $w\in C^{-\infty}(SM_e^\circ)$ 
\begin{equation}\label{Xpu0}
X_\perp w_0= \frac{1}{2\pi}\pi_1^*(*d ({\pi_0}_*w)).
\end{equation}
where $*: T^*M_e\to T^*M_e$ is the Hodge-star operator on 1-forms.
We use the odd/even decomposition of distributions with respect to the involution $A(x,v)=(x,-v)$ on 
$SM_e$, $SM$ and $\pl SM$, as explained in the end of Section \ref{subsec:resolvent}.
The operator $X$ maps odd distributions to even distributions and conversely.
The operator $H$ maps odd (resp. even) distributions to odd (resp. even) distributions, we set 
$H_{\rm ev} w:=H(w_{\rm ev})$ and $H_{\rm od}w:=H(w_{\rm od})$. We write similarly 
$H_{\pl,{\rm ev}}$ and $H_{\pl,{\rm od}}$ for the Hilbert transform on (open sets of) $\pl SM$ and the relation \eqref{hilbertrest} also holds with $H_{\pl, {\rm ev}}$ replacing $H_{\pl}$ if $w$ is even.
Taking the odd part of \eqref{commutator}, we have for any $w\in C^{-\infty}(SM_e^\circ)$ 
\begin{equation}\label{oddpart}
\begin{gathered}
H_{\rm od}Xw-XH_{\rm ev}w=\frac{1}{2\pi}\pi_1^*(*d ({\pi_0}_*w))=X_\perp w_0.
\end{gathered}
\end{equation}

\subsection{Determination of the conformal structure from scattering map}

For functions $\omega\in C^{\infty}(\pl SM)$, the function ${\pi_0}_*\omega$ is smooth on $\pl M$, given by the expression ${\pi_0}_*\omega(x,v)=\frac{1}{2\pi}\int_{S_xM_e}\omega(x,v)dS_x(v)$ and 
thus if $w\in C^{\infty}(SM_e^\circ)$ and $\omega=w|_{\pl SM}$, one has 
${\pi_0}_*\omega= ({\pi_0}_*w)|_{\pl M}.$
 As above, the restriction map $C^\infty(SM_e^\circ)\to C^\infty(\pl SM)$, extends continuously to the space 
of distributions on $SM_e^\circ$ with wavefront set included in $E_+^*\cup E_-^*$ 
(since this does not intersect $N^*(\pl SM)$). Therefore,  for 
$w\in C^{-\infty}(SM_e^\circ)$ with ${\rm WF}(w)\subset E_+^*\cup E_-^*$, we have
\begin{equation}\label{restpi0}
{\pi_0}_*\omega= ({\pi_0}_*w)|_{\pl M}, \quad \textrm{ with }\omega:=w|_{\pl SM}
\end{equation}
in the distribution sense (in fact, as in the proof of Proposition \ref{surj}, it is easily checked that 
${\pi_0}_*w\in C^\infty(M_e^\circ)$). 

For an oriented Riemannian surface $(M,g)$ with boundary, the space of holomorphic functions
can be described as follows: $f=f_1+if_2$ is holomorphic if $*df_1=df_2$ where $*$ is the Hodge star operator.
We use the notation $\mc{P}(f)\in C^\infty(M)$ for the unique solution of $\Delta_g \mc{P}(f)=0$  with $\mc{P}(f)=f$ 
on $\pl M$. 

\begin{theo}\label{scatrig}
Let $(M,g)$ and $(M',g')$ be two oriented Riemannian surfaces with the same boundary 
$N$, and $g|_{TN}=g'|_{TN}$. 
For both surfaces, assume that the boundary is strictly convex,  
the trapped set are hyperbolic, that \eqref{condescape} holds, and the metrics have no conjugate points.
If $(M,g)$ and $(M',g')$ are scattering equivalent, then there exists a diffeomorphism 
$\phi:M\to M'$ with $\phi|_{\pl M}={\rm Id}$ and such that $\phi^*g'=e^{2\eta}g$ for some $\eta\in C^\infty(M)$ satisfying 
$\eta|_{\pl M}=0$. 
\end{theo}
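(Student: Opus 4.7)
The plan is to follow the approach of Pestov--Uhlmann \cite{PeUh}: I will show that the scattering map $S_g$, together with the boundary data $(\pl M,g|_{T\pl M})$, determines the space $\mc{H}_g\subset C^\infty(\pl M)$ of boundary values of holomorphic functions on $(M,g)$. Once $\mc{H}_g=\mc{H}_{g'}$ is established under the identification $\alpha$ between $\pl SM$ and $\pl SM'$, Belishev's reconstruction theorem \cite{Be} produces a diffeomorphism $\phi:M\to M'$ with $\phi|_{\pl M}={\rm Id}$ and $\phi^*g'=e^{2\eta}g$ for some $\eta\in C^\infty(M)$ vanishing on $\pl M$.

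To characterize $\mc{H}_g$ intrinsically, given $\omega\in L^2_{S_g}(\pl SM)$ with ${\rm WF}(\omega)\subset E_{\pl,-}^*\cup E_{\pl,+}^*$, Corollary \ref{corolbvp} produces an $X$-invariant distribution $w\in L^1(SM_e)$ with $w|_{\pl SM}=\omega$ and ${\rm WF}(w)\cap T^*(SM_e\setminus K)\subset E_-^*\cup E_+^*$. I then apply the fiberwise Hilbert transform $H$ and use the commutator relation \eqref{commutator}, in the form of its odd part \eqref{oddpart}: if $Xw=0$, the obstruction for $Hw$ to be $X$-invariant is precisely $X_\perp w_0 + (X_\perp w)_0$, which by \eqref{Xpu0} records the failure of ${\pi_0}_*w$ to be the real part of a (single-valued) holomorphic function on $M$. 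This reproduces, in the present setting, the key equivalence of \cite[Th. 1.5]{PeUh}: $Hw$ is again $X$-invariant iff ${\pi_0}_*w$ is holomorphic real part. At the boundary, using \eqref{hilbertrest} and the fact that $H$ preserves wavefront sets \cite[Lem. 3.5]{Gu}, the invariance of $Hw$ is equivalent to $H_\pl\omega\in L^2_{S_g}(\pl SM)$. This yields the intrinsic characterization
\[
\mc{H}_g=\Big\{\, h\in C^\infty(\pl M)\,\, \Big|\,\, \exists\, \omega\in L^2_{S_g}(\pl SM),\,\, H_\pl\omega\in L^2_{S_g}(\pl SM),\,\, ({\pi_0}_*w_\omega)|_{\pl M}=h\,\Big\},
\]
where $w_\omega$ is the invariant extension from Corollary \ref{corolbvp}.

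Each ingredient in this characterization is determined by $S_g$ and $g|_{T\pl M}$: the space $L^2_{S_g}(\pl SM)$ by definition \eqref{L2S}; the bundles $E_{\pl,\pm}^*$ by Lemma \ref{SgdetermE}; the fiber structures $H_\pl$ and ${\pi_0}_*|_{\pl SM}$ depend only on the smooth $S^1$-bundle $\pl SM\to \pl M$, hence only on $g|_{T\pl M}$. Under scattering equivalence, all these data transport to $g'$ through $\alpha$, so $\mc{H}_g=\mc{H}_{g'}$ inside $C^\infty(\pl M)=C^\infty(\pl M')$, and Belishev's theorem finishes the proof. A final smoothness check, using \eqref{smoothext} and elliptic regularity of $\mc{P}(h)$, ensures that the $L^2$ criterion above genuinely detects the smooth trace space needed for Belishev's setup.

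The main obstacle, absent from the simple case of \cite{PeUh}, is that the invariant extension $w$ is honestly singular along $W_\pm(K)\subset SM_e$, so the restriction $w|_{\pl SM}$, the composition $Hw$, the further restriction $(Hw)|_{\pl SM}$, and the pushforward ${\pi_0}_*w$ all require careful wavefront set analysis to be well-defined. The essential technical inputs are the wavefront bound of Corollary \ref{corolbvp}, the transversality of $E_\pm^*$ to $N^*(\pl SM)$ and to the vertical bundle $V=\ker d\pi_0$ (the latter using \eqref{klingenb} and hence the no conjugate points hypothesis), and the wavefront preservation by $H$. A secondary delicate point is that the commutator argument of Pestov--Uhlmann, originally carried out on smooth invariant functions, must be extended to the distributional setting via the density of smooth data in $L^2_{S_g}(\pl SM)$ and continuity of all operations in the appropriate topologies --- precisely where the hyperbolicity of the trapped set and the integrability condition \eqref{impcond} become indispensable.
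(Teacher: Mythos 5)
Your high-level strategy matches the paper's: reduce scattering rigidity to the determination of the space of boundary traces of holomorphic functions, then invoke Belishev. But the intrinsic characterization you propose for $\mc{H}_g$ is incorrect, and this breaks the argument at its crux.

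You claim that when $Xw=0$, the distribution $Hw$ is again $X$-invariant iff ${\pi_0}_*w$ is the real part of a holomorphic function. This is false. From \eqref{commutator}, $Xw=0$ gives $-XHw=X_\perp w_0+(X_\perp w)_0$, and by \eqref{Xpu0} we have $X_\perp w_0=-\tfrac{1}{2\pi}\pi_1^*(*d\,{\pi_0}_*w)$. The two terms $X_\perp w_0$ and $(X_\perp w)_0$ lie in different vertical Fourier modes (degree $\pm1$ versus degree $0$), so $XHw=0$ forces both to vanish separately; in particular $d({\pi_0}_*w)=0$, i.e.\ ${\pi_0}_*w$ is constant. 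Thus your criterion $H_\pl\omega\in L^2_{S_g}(\pl SM)$ would only detect the constants, not the full holomorphic trace space. What is actually true is that when ${\pi_0}_*w=\mc{P}(f)$ with $\mc{P}(f+if^*)$ holomorphic, one has $XH_{\rm ev}w=\tfrac{1}{2\pi}\pi_1^*(d\mc{P}(f^*))=\tfrac{1}{2\pi}X\pi_0^*\mc{P}(f^*)$, so it is $H_{\rm ev}w-\tfrac{1}{2\pi}\pi_0^*\mc{P}(f^*)$ that is $X$-invariant, not $H_{\rm ev}w$ itself. The scattering-level condition must therefore involve the harmonic conjugate $f^*$ as a free parameter: the correct statement, which is what the paper proves, is the identity $2\pi(S_g^*-{\rm Id})(H_{\pl,{\rm ev}}\omega)=(S_g^*-{\rm Id})\pi_0^*f^*$ --- equivalently $2\pi H_{\pl,{\rm ev}}\omega-\pi_0^*f^*\in L^2_{S_g}(\pl SM)$ --- holding for some admissible $\omega$, iff $I_0^*\omega_-=\mc{P}(f)$ with $\mc{P}(f+if^*)$ holomorphic. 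Your characterization of $\mc{H}_g$ must be repaired to include this extra degree of freedom $f^*$, and the transport to $(M',g')$ must then use the intertwining $\alpha^*(H_{\pl,{\rm ev}}\omega)=H'_{\pl,{\rm ev}}(\alpha^*\omega)$ together with $\pi_0\circ\alpha=\pi_0$ to carry the whole identity across.

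A secondary issue: for the direction that produces $w$ from a given holomorphic trace you cite only Corollary \ref{corolbvp}, which constructs an invariant extension with prescribed \emph{boundary value} $\omega$. But here you need an invariant distribution with prescribed \emph{pushforward} ${\pi_0}_*w=\mc{P}(f)$; this is Proposition \ref{surj} (surjectivity of $I_0^*$), which in turn rests on the ellipticity of $\Pi_0^e$ and the injectivity of $I_0$ from Theorem \ref{Inj}. Conversely, the ``only if'' direction, which you leave to ``density and continuity,'' actually requires the splitting of $w$ into a compactly supported piece near $K$ (handled via the $H^{1/2}_{\rm comp}$ mapping property \eqref{TT*} of $I_1$) and a piece away from $K$, plus the injectivity of $I_1$ modulo exact forms (Theorem \ref{Inj}, part 2). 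These are the places where hyperbolicity of $K$, the absence of conjugate points, and \eqref{condescape} are really used; they cannot be replaced by a soft density argument.
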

\begin{proof}  
We shall follow the method of Pestov-Uhlmann \cite{PeUh} and we will need to use most of the results from the previous sections.
We work on $(M,g)$ but all the results below 
apply as well on $(M',g')$. For $f\in C^\infty(N)$, the harmonic extension $\mc{P}(f)$ admits a harmonic conjugate 
$\mc{P}(f^*)$ if  $*d\mc{P}(f)=d\mc{P}(f^*)$ or equivalently $\mc{P}(f+if^*)$ is holomorphic.
We are going to prove the following statement: let $f^*\in C^\infty(N)$,  then 
\begin{equation} \label{identiteS-Id}
2\pi(S_{g}^*-{\rm Id})(H_{\pl,{\rm ev}}\omega)=(S_{g}^*-{\rm Id})\pi_0^*f^*
\end{equation}
holds for some $\omega\in L^2_{S_g}(\pl SM)$ satisfying ${\rm WF}(\omega_-)\subset E_{\partial,-}^*$ and ${\rm WF}(\omega_+)\subset E_{\partial,+}^*$, if and only if
\begin{equation}\label{reciproq}
I_0^*\omega_-=\mc{P}(f) \textrm{ with }\mc{P}(f-if^*) \textrm{ holomorphic}
\end{equation}
where ${\pi_0}_*\mc{E}_-=I_0^*$ (see Lemma \ref{I^*}) and $\omega_\pm:=\omega|_{\pl_\pm SM}$. 

Let us prove the first sense. Let $f\in C^\infty(N)$ so that $\mc{P}(f)$ admits a harmonic conjugate. 
Using Proposition \ref{surj}, there exists $w\in L^1(SM_e)\cap C^\infty(SM_{e}\setminus (\Gamma_+\cup \Gamma_-))$ satisfying $Xw=0$ in $M^\circ_{e}$ in the distribution sense 
with ${\pi_0}_*w=\mc{P}(f)$ in $M$ and 
\begin{equation}\label{restriction}
\begin{gathered}
\omega:=w|_{\pl SM}\in L^2_{S_g}(\pl SM),\quad 
{\rm WF}(\omega)\subset E_{\pl,+}^*\cup 
E_{\pl,-}^*
\end{gathered}\end{equation} 
$\omega_{-}:=\omega|_{\pl_-SM}$, 
where $E_{\pm,\pl}^*\subset T_{\Gamma_\pm}^*(\pl SM)$ 
are the bundles defined by \eqref{Epmbound}  for the manifold $M$ and  ${\pi_0}_*$ 
is the pushforward defined by \eqref{pushforward} on $SM$.  
From \eqref{oddpart} and using that $H_{\rm ev}w$ is smooth in $SM\setminus (\Gamma_-\cup\Gamma_+)$, we get 
\begin{equation}\label{XH}
XH_{\rm ev}w= -\frac{1}{2\pi}\pi_1^*(*d\mc{P}(f))
\end{equation}
as smooth functions on $SM\setminus (\Gamma_-\cup\Gamma_+)$.
 Now,  for any $\psi\in C^\infty (SM\setminus (\Gamma_+\cup \Gamma_-))$,
\[IX\psi=(S_{g}^*-{\rm Id})(\psi|_{\pl SM \setminus (\Gamma_-\cup \Gamma_+)})\] 
as a function on $\pl_-SM\setminus \Gamma_-$. 
Applying $I$ to \eqref{XH} and using that $\mc{P}(f-if^*)$ is holomorphic then gives ($I_1$ is the X-ray transform on $1$-forms)
\[
2\pi (S_{g}^*-{\rm Id})((H_{\rm ev}w)|_{\pl SM})=-I_1(*d\mc{P}(f))=I_1(d\mc{P}(f^*))=IX\pi^*_0(\mc{P}(f^*))= (S_{g}^*-{\rm Id})\pi_0^*f^*
 \]
as smooth functions on $\pl_-SM\setminus \Gamma_-$ 
which are globally in $L^2(\pl_-SM,d\mu_\nu)$. Using \eqref{hilbertrest} we thus obtain the identity \eqref{identiteS-Id}.

Next, we prove the converse. Conversely, let $f^*\in C^\infty(N)$, 
let $q\in C^\infty(M)$ with $q|_{\pl M}=f^*$ and let $\chi\in C_c^\infty(SM^\circ)$ which is equal to 
$1$ in $\{\rho>\eps\}$ with $\eps>0$ small (using $\rho$ as in Section \ref{subset:extension}), thus on $K$.
We write $w_1:=\chi\mc{E}_-\omega_-$ and $w_2:=(1-\chi)\mc{E}_-\omega_-$ and  
by \eqref{oddpart}, we get for $j=1,2$
\begin{equation}\label{decompoi} 
HXw_j-XHw_j=\pi_1^*(*d{\pi_0}_*w_j).
\end{equation}
By Proposition \ref{boundval}, ${\rm WF}(w_2)\subset E_+^*\cup E_-^*$ thus ${\pi_0}_*w_2\in C^\infty(M)$ (using $(E^*_-\cup E_+^*)\cap H^*=\{0\}$ if $H^*\subset T^*(SM_e^\circ)$ is the annulator of the vertical bundle $V=\ker d\pi_0$), and ${\pi_0}_*w_1\in H_{\rm comp}^{1/2}(SM_e^\circ)$ with support containing $K$. 
We claim that we can apply $I$ to \eqref{decompoi} and view the result as a measurable function in 
$\pl_-SM\setminus \Gamma_-$:  for $j=2$ we can apply $I$ since all terms are smooth in 
$SM\setminus (\Gamma_-\cup\Gamma_+)$ and we get a smooth function 
on $\pl_-SM\setminus \Gamma_-$ that is in $L^2(\pl_-SM)$ and for $j=1$ the only possible trouble is $I_1(*d{\pi_0}_*w_1)$ but this makes sense 
since $I_1: H_{\rm comp}^{-1/2}(M^\circ,T^*M)\to L^2(\pl_-SM,d\mu_\nu)$ is bounded just as $I_0$ 
in \eqref{TT*} (see the remark after Proposition \ref{Psido}). Therefore, applying $I$ to \eqref{decompoi} and summing  
for $j=1,2$, we obtain almost everywhere on $\pl_-SM$
\[(S_{g}^*-{\rm Id})(H_{\pl,{\rm ev}}\omega)=IXH\mc{E}_-(\omega_-)
=-\frac{1}{2\pi}I_1(*d{\pi_0}_*w_1+*d{\pi_0}_*w_2),\]
this term is in $L^2(\pl_-SM,d\mu_\nu)$ and equal to $\frac{1}{2\pi}(S_{g}^*-{\rm Id})\pi_0^*f^*=\frac{1}{2\pi}I_1(dq)$ by our assumption. Since we know that this term is smooth on $\pl_-SM$ we obtain in $L^2(\pl_-SM,d\mu_\nu)$ 
\[I_1(*dI_0^*\omega_-+dq)=0.\]
By Theorem \ref{Inj} one has $*dI_0^*\omega_-+dq=d\psi$ for some $\psi\in C^{\infty}(M)+H_{\rm comp}^{1/2}(M^\circ)$ satisfying 
$\psi|_{\pl M}=0$. Applying first $d$ and then $d*$ to that equation and using ellipticity, we get 
$\psi-q\in C^\infty(M)$ and $I_0^*\omega_-\in C^\infty(M)$ and both functions are harmonic conjugate, 
which means that \eqref{reciproq} holds with $f:=(I_0^*\omega_-)|_{\pl M}$.  

We can finally finish the proof. All that we said above applies also on $(M',g')$ and we shall put  
prime for objects related to $g'$.
Let $\alpha:SM'\to SM$ be the map \eqref{alpha}, so that $\alpha\circ S_{g'}=S_{g}\circ \alpha$ by assumption. 
Remark that for each $\omega\in C^\infty(\pl SM)$, 
$(\omega\circ \alpha)_k=\omega_k\circ\alpha$ in the Fourier 
decomposition  \eqref{decompplSM}, and thus 
\begin{equation}\label{alpha^*}
\alpha^*(H_{\pl,{\rm ev}}\omega)=H'_{\pl,{\rm ev}}(\alpha^*\omega).
\end{equation}
This identity extends to $\omega\in L^2(\pl SM)$ by continuity. Let $f^*\in C^\infty(N)$ and assume 
that there exists $f\in C^\infty(N)$ so that $\mc{P}(f+if^*)$ is holomorphic in $(M,g)$, then we have proved that there is 
$\omega\in L^2_{S_g}(\pl SM)$ satisfying \eqref{identiteS-Id}, ${\pi_0}_*\omega=f$ and \eqref{restriction}.
Using $\alpha\circ S_{g'}=S_{g}\circ \alpha$ and $\pi_0\circ \alpha=\pi_0$,  
together with \eqref{alpha^*}, we get 
\begin{equation}\label{identiteinV1V2}
(S_{g'}^*-{\rm Id})(H'_{\pl, {\rm ev}}\omega')=(S_{g'}^*-{\rm Id})\pi_0^*f^*. 
\end{equation} 
with $\omega':=\alpha^*\omega$. We can use Lemma \ref{SgdetermE} which implies that 
${\rm WF}(\omega')\subset E_{\pl,+}^{'\, *}\cup E^{'\, *}_{\pl,-}$, and since $\omega'\in L^2_{S_g}(SM')$, 
we get by \eqref{reciproq} applied with $(M',g')$ that ${I_0'}^*(\omega')-i\mc{P}'(f^*)$ is holomorphic in 
$(M',g')$. Since ${I_0'}^*(\omega')|_{\pl M}={\pi_0}_*\omega=f$, we have shown that 
all boundary value of a holomorphic function on $(M,g)$ is also the boundary value of one on $(M',g')$. Exchanging the role of $(M,g)$ and $(M',g')$, we show that the space of boundary values of holomorphic functions on 
$(M,g)$ and $(M,g')$ are the same. The existence of the conformal diffeomorphism $\phi:M\to M'$ then follows from the 
work of Belishev \cite{Be}. 
\end{proof}

\end{document}